\documentclass[12pt]{amsart}
\usepackage{amsmath}
\usepackage{amsfonts}
\usepackage[T1]{fontenc} %needed for correct accents
\usepackage{color}
\usepackage{url}

\usepackage{a4wide}
\usepackage{verbatim}
%\usepackage{showkeys}
%\usepackage{geometry}   
%\usepackage{bbm}
% See geometry.pdf to learn the layout options. There are lots.
%\geometry{a4paper}                   % ... or a4paper or a5paper or ...
%\geometry{landscape}                % Activate for for rotated page geometry
\usepackage[parfill]{parskip}    % Activate to begin paragraphs with an empty line rather than an indent
\usepackage{graphicx}
\usepackage{amssymb}
\usepackage{epstopdf}
\usepackage{psfrag}
%\date{}                                           % Activate to display a given date or no date
% \newcommand {\be}{\begin{equation}}
% \newcommand {\bes}{\begin{displaymath}}
% \newcommand {\es}{\end{displaymath}}
% \newcommand {\e}{\end{equation}}

%\newcommand {$\varphi_0^h$}{S_h}

\newcommand {\bea}{\begin{align}}
\newcommand {\ea}{\end{align}}
\newtheorem{proposition}{Proposition}[section]
\newtheorem{theorem}{Theorem}[section]

\newtheorem{lemma}{Lemma}[section]

\newtheorem{remark}{Remark}[section]

\newtheorem{corollary}{Corollary}[section]

\usepackage{xspace}

\usepackage{subfigure}

\bibliographystyle{plain}

\newcommand{\thmref}[1]{{Theorem~\ref{#1}}}
\newcommand{\lemref}[1]{{Lemma~\ref{#1}}}
\newcommand{\secref}[1]{{Section~\ref{#1}}}

\newcommand{\propref}[1]{{Proposition~\ref{#1}}}
\newcommand{\coref}[1]{{Corollary~\ref{#1}}}

\newcommand{\eps}{\epsilon}

\begin{document}
\title[Sharp interface limit of the stochastic Cahn-Hilliard equation]
{Improved estimates for the sharp interface limit of the stochastic Cahn-Hilliard equation with space-time white noise}

\author{\v{L}ubom\'{i}r Ba\v{n}as}
\address{Department of Mathematics, Bielefeld University, 33501 Bielefeld, Germany}
\email{banas@math.uni-bielefeld.de}
\author{Jean Daniel Mukam}
\address{Department of Mathematics, Bielefeld University, 33501 Bielefeld, Germany}
\email{jmukam@math.uni-bielefeld.de}

\begin{abstract}
We study the sharp interface limit of the stochastic Cahn-Hilliard equation with cubic double-well potential and additive space-time white noise
$\epsilon^{\sigma}\dot{W}$, where $\epsilon>0$ is an interfacial width parameter.   
We prove that, for sufficiently large scaling constant $\sigma >0$,
the stochastic Cahn-Hilliard equation converges to the deterministic Mullins-Sekerka/Hele-Shaw problem for $\epsilon\rightarrow 0$. 
The convergence is shown in suitable fractional Sobolev norms as well as in the $L^p$-norm for $p\in (2, 4]$ in spatial dimension $d=2,3$.
This generalizes the existing result for the space-time white noise to dimension $d=3$  and improves the existing results  for smooth noise, which were so far limited to $p\in \left(2,\frac{2d+8}{d+2}\right]$ in spatial dimension $d=2,3$.
%As a byproduct of the analysis of the stochastic problem with the space-time white noise we also obtain improved estimates for the sharp interface limit 
%{\red of the stochastic Cahn-Hilliard equation with a more regular noise with borderline regularity that still admits $\mathbb{H}^1$ spatial regularity of the solution}
%and of the deterministic Cahn-Hilliard equation.
As a byproduct of the analysis of the stochastic problem with space-time white noise, we identify minimal regularity requirements on the noise which allow convergence to the sharp interface limit in the $\mathbb{H}^1$-norm
and also provide improved convergence estimates for the sharp interface limit of the deterministic problem.
\end{abstract}

\maketitle

%\begin{keyword}
%Stochastic Cahn-Hilliard equation  \sep Hele-Shaw/Mullins-Sekerka problem \sep Sharp interface limit \sep Space-time white noise \sep Errors estimate.
%% keywords here, in the form: keyword \sep keyword

%% MSC codes here, in the form: \MSC code \sep code
%% or \MSC[2008] code \sep code (2000 is the default)
%\end{keyword}

\section{Introduction}
\label{Introduction}
We consider the stochastic Cahn-Hilliard equation with  additive space-time white noise
%\begin{subequations}
\begin{align}
\label{sch} %\label{model1}
du^{\epsilon} & =\Delta\left(-\epsilon\Delta u^{\epsilon}+\frac{1}{\epsilon}f(u^{\epsilon})\right)dt+\epsilon^{\sigma}dW(t)
\quad && \text{in } \mathcal{D}_T:=(0, T)\times\mathcal{D},
\\ 
%\label{Boundary}
\nonumber
%\frac{\partial u^{\epsilon}}{\partial \vec{n}} & =\frac{\partial\Delta u^{\epsilon}}{\partial \vec{n}}=0  && \text{on } (0, T)\times \partial \mathcal{D},
\partial_{\vec{n}} u^{\epsilon} & = \partial_{\vec{n}} \Delta u^{\epsilon}=0  && \text{on } (0, T)\times \partial \mathcal{D},
\\
\nonumber
u^{\epsilon}(0) & = u^{\epsilon}_0  && \text{in } \mathcal{D},
\end{align}
%\end{subequations}
where $\sigma>0$, $T>0$ are fixed constants and $\epsilon>0$ is a (small) interfacial width parameter.
For simplicity we take $\mathcal{D}=(0, 1)^d$ to be the unit cube in $\mathbb{R}^d$, $d=2,3$,
with $\vec{n}$ the outer unit normal to $\partial\mathcal{D}$ and $W$ is the space-time white noise.
The nonlinearity $f$ in (\ref{sch}) ensures that asymptotically the solutions $\{u^{\eps}\}_{\eps>0}$ of (\ref{sch}) remain within the physically meaningful range $-1 \leq u^{\epsilon} \leq 1$.
One of the most widely used choices is $f(u) = F'(u) = u^3-u$
where $F(u)=\frac{1}{4}(u^2-1)^2$ is a double-well potential with minima at $\pm 1$. 

Formally, equation \eqref{sch} can be equivalently written in the mixed form
\begin{equation}
\label{model2}
\begin{array}{rclll}
du^{\epsilon} & = & \Delta w^{\epsilon}dt+\epsilon^{\sigma}dW(t)\quad & \text{in}\; \mathcal{D}_T,\\
w^{\epsilon} & = &-\epsilon\Delta u^{\epsilon}dt+\frac{1}{\epsilon}f(u^{\epsilon})\quad & \text{in}\; \mathcal{D}_T,
\end{array}
\end{equation}
where $w^{\epsilon}$ is the so-called chemical potential. For sufficiently smooth noise and data the
chemical potential $w^{\epsilon}$ has sufficient regularity so that the formulation \eqref{model2} can be made rigorous, cf. \cite{Banas19}.

The Cahn-Hilliard equation is a prototype  model for mass conservative phase separation and
coarsening phenomena in binary alloys \cite{Cahn1,Cahn2}.   
The solution $u^{\epsilon}$ of \eqref{sch} is an order parameter 
which approaches $\pm1$ in the regions occupied by the pure phases. The pure phases are separated by a 
thin layer where $|u^{\epsilon}| < 1$, so-called diffuse interface, 
with thickness proportional to the (small) interfacial width parameter $\epsilon$.
It has been observed in \cite{GAMEIRO2005693} that 
the simulation results obtained with the stochastic Cahn-Hilliard equation with the space-time white noise  
are in better agreement with physical experiments than those obtained by deterministic simulations.

In the deterministic setting (i.e., when $W\equiv 0$ in (\ref{sch})) the Cahn-Hilliard equation reads as 
\begin{align}
\label{model3}
\begin{array}{rcllll}
\partial_t u^{\epsilon}_D & = &\Delta w^{\epsilon}_D\quad & \text{ in } \mathcal{D}_T,\\
w^{\epsilon}_D & = &-\epsilon\Delta u^{\epsilon}_D+\frac{1}{\epsilon}f(u^{\epsilon}_D) \quad &\text{ in } \mathcal{D}_T.
\end{array}
\end{align}
The sharp interface limit of the deterministic Cahn-Hilliard equation has been analyzed in \cite{abc94}, where
it was shown that, for $\epsilon\rightarrow 0$, the function $w^{\epsilon}_D$ tends to a function $v$, which together with the free boundary $\Gamma_t:=\lim_{\epsilon\rightarrow0}\Gamma_t^{\epsilon}$
(where $\Gamma_t^{\epsilon}:=\{x\in \mathcal{D}:\; {u^{\epsilon}_D(t,x)=0}\}$, $t\in (0, T)$) satisfies the Mullins-Sekerka/Hele-Shaw problem:
\begin{align}
\label{model4}
\begin{array}{lllll}
 \Delta v & =0\quad & \text{in}\; \mathcal{D}\setminus \Gamma_t,\\
 \partial_{\vec{n}}v & =0\quad & \text{on}\; \partial\mathcal{D},\\
 v & =\lambda H\quad & \text{on}\; \Gamma_t,\\
 \mathcal{V} & =\frac{1}{2}(\partial_{\vec{n}_{\Gamma}}v^{+}-\partial_{\vec{n}_{\Gamma}}v^{-})\quad & \text{on}\; \Gamma_t,\\
 \Gamma_0 & =\Gamma_{00},
\end{array}
\end{align}
where $H$ is the mean curvature of $\Gamma_t$, $\mathcal{V}$ is the normal velocity of the interface, 
$\vec{n}_{\Gamma}$ is the unit normal vector to $\Gamma_t$ and $v^{+}$, $v^{-}$ are respectively the restriction 
of $v$ on $\mathcal{D}^{+}_t$, $\mathcal{D}^{-}_t$ (the exterior and interior of $\Gamma_t$ in $\mathcal{D}$). % $\lambda=\frac{1}{2}\int_{-1}^1F(s)ds$

In the stochastic setting  with trace class noise, it is shown in \cite{abk18} that, for suitable scaling of the noise, the sharp interface limit of the Cahn-Hilliard equation is the deterministic Hele-Shaw model (\ref{model4}) (cf. \cite{Antonopoulou_2023} for the case of multiplicative noise). The work \cite{Banas19} studies convergence of the numerical approximation of the stochastic Cahn-Hilliard equation
with smooth noise to the sharp interface limit and also obtains the first result on uniform pointwise convergence to the deterministic Hele-Shaw problem (\ref{model4}).
The sharp interface limit (\ref{model4}) of the Cahn-Hilliard equation with singular space-time white noise has been studied
in \cite{BYZ22}. We note that all aforementioned results (also including the present work) 
rely on the spectral estimate for the deterministic problem, cf. \cite{abc94}, 
the use of which requires appropriate scaling of the noise with respect to the interfacial width parameter $\eps$.
 We also mention the recent work \cite{sch_aposter}
which employs a (discrete) stochastic counterpart of the principal eigenvalue problem to 
derive a posteriori error estimates for the numerical approximation of the stochastic Cahn-Hilliard equation
 and \cite{yang2019} where a stochastic Hele-Shaw problem is obtained as the sharp-interface limit of the stochastic Cahn-Hilliard equation with (unscaled) smooth-in-time noise.

Throughout this paper we assume that for a given smooth closed hypersurface $\Gamma_{00}\subset \mathcal{D}$
the Hele-Shaw problem \eqref{model4} admits a smooth solution $(v, \{\Gamma_t\}_{t\in[0, T]})$.
Under this assumption, it is possible to construct an approximation $(u^{\epsilon}_{\mathsf{A}}, w^{\epsilon}_{\mathsf{A}})$ of \eqref{model3} that satisfies 
\begin{align}
\label{model5}
\begin{array}{rclll}
\partial_t u^{\epsilon}_{\mathsf{A}} & = &\Delta w^{\epsilon}_{\mathsf{A}}\quad  &\text{in}\; \mathcal{D}_T,\\
w^{\epsilon}_{\mathsf{A}} & = & -\epsilon\Delta u^{\epsilon}_{\mathsf{A}}+\frac{1}{\epsilon}f(u^{\epsilon}_{\mathsf{A}})+r^{\epsilon}_{\mathsf{A}}\quad & \text{in}\; \mathcal{D}_T,
%\\
%\partial_{\vec{n}} u^{\epsilon}_{\mathsf{A}} & ={\partial_{\vec{n}}\Delta w^{\epsilon}_{\mathsf{A}}}=0\quad & \text{on}\; \partial\mathcal{D}.
\end{array}
\end{align}
with the same boundary conditions as in \eqref{sch},  cf., \cite[(4.30)]{abc94}.
Furthermore, for any $K>0$ and
\begin{align*}
k>(d+2)\frac{d^2+6d+10}{4d+16},
\end{align*} 
the following estimates hold, cf.  \cite[Theorems 2.1 \& 4.12]{abc94}  and \cite[(4.30)]{abc94}:
\begin{subequations}
\label{Alakiresult1}
\begin{align}
\label{contconv}
&\Vert r^{\epsilon}_{\mathsf{A}}\Vert_{C(\mathcal{D}_T)} \leq C\epsilon^{K-2},\quad \Vert w^{\epsilon}_{\mathsf{A}}-v\Vert_{C(\mathcal{D}_T)}  \leq C\epsilon,
\\ \label{lpconv}
&\quad \Vert u^{\epsilon}_{D}-u^{\epsilon}_{\mathsf{A}}\Vert_{ L^p(0, T; \mathbb{L}^p)} \leq C\epsilon^k\quad \text{ for }  p\in\left(2, \frac{2d+8}{d+2}\right],
\end{align} 
\end{subequations}
where the constant $C\geq 0$ is independent of $\epsilon$.

For $d=2$ the best possible space in (\ref{lpconv}) is  $L^3(0, T; \mathbb{L}^3)$, for $d=3$ the best space in (\ref{lpconv}) is  $L^{\frac{14}{5}}(0, T; \mathbb{L}^{\frac{14}{5}})$.
This convergence result is suboptimal in the case of the double-well potential where  $u^\eps, u^{\epsilon}_{\mathsf{A}}\in L^4(0, T; \mathbb{L}^4)$.
%Moreover, for $d=3$ the best space in (\ref{lpconv}) is $L^{\frac{14}{5}}(\mathcal{D}_T)$ which excludes the case of the double-well potential (i.e., the cubic nonlinearity $f$ in (\ref{model3})).
%However,  $u^\eps, u^{\epsilon}_{\mathsf{A}}\in L^4(\mathcal{D}_T)$; so it would be optimal to establish convergence results in $L^4(\mathcal{D}_T)$. 

%Weak convergence of the deterministic Cahn-Hilliard equation was also investigated in \cite{Chen96}. 
%{\red Results on the sharp-interface limit of the Cahn-Hilliard equation in the stochastic setting are relatively recent.}

%Recently the sharp interface limit of the stochastic problem (\ref{sch}) with additive trace-class noise was analyzed in \cite{abk18}.  
In the stochastic setting with trace-class noise one has the following error estimates, cf., \cite[Theorem 3.10]{abk18}:
\begin{align}
\label{Antonopoulouresult1}
\mathbb{P}\left(\left\{\Vert u^{\epsilon}-u^{\epsilon}_{\mathsf{A}}\Vert_{ L^p(0, T; \mathbb{L}^p)}\leq C\epsilon^{\gamma}\right\}\right)\geq 1-C_l\epsilon^l,\\
\label{Antonopoulouresult2}
\mathbb{P}\left(\left\{\Vert u^{\epsilon}-u^{\epsilon}_{\mathsf{A}}\Vert^2_{L^{\infty}(0, T; \mathbb{H}^{-1})}\leq C\epsilon^{g(\sigma, \gamma)}\right\}\right)\geq 1-C_l\epsilon^l,\\
\label{Antonopoulouresult3}
\mathbb{P}\left(\left\{\Vert u^{\epsilon}-u^{\epsilon}_{\mathsf{A}}\Vert^2_{L^2(0, T; \mathbb{H}^1)}\leq C\epsilon^{h(\sigma, \gamma)}\right\}\right)\geq 1-C_l\epsilon^l,
\end{align}
for suitable $\gamma>0$, $l>0$, where $u^{\epsilon}_{\mathsf{A}}$ is the (deterministic) solution of (\ref{model5}) and
$p$ is as in \eqref{lpconv}.  
Hence,  as in the deterministic setting, the best spaces in which the convergence for the stochastic Cahn-Hilliard equation takes place in $d=2$ and $d=3$
are  $L^3(0, T; \mathbb{L}^3)$ and  $L^{\frac{14}{5}}(0, T; \mathbb{L}^{\frac{14}{5}})$, respectively.

The sharp interface limit of the two dimensional stochastic Cahn-Hilliard equation driven by singular space-time white noise was recently analyzed 
in \cite{BYZ22} where error estimate \eqref{Antonopoulouresult1} (with $p=3$) and \eqref{Antonopoulouresult2} were obtained. 
Due to regularity restrictions, an analogue of the error estimate \eqref{Antonopoulouresult3} 
is not available in the case of  space-time white noise.

In the previous works on the sharp interface limits of the deterministic and the stochastic Cahn-Hilliard equation \cite{abc94,abk18}, the following inequality (see \cite[Lemma 2.2]{abc94})
was employed to estimate the nonlinearity:
\begin{align}\label{xxx1}
-\int_{\mathcal{D}}\epsilon^{-1}\mathcal{N}(u^{\epsilon}_{\mathsf{A}},v)v\leq C\epsilon^{-1}\Vert v\Vert^p_{\mathbb{L}^p},\quad p\in(2,3],
\end{align}
where $\mathcal{N}(u,v):=f(u+v)-f(u)-f'(u)v$. 
The above estimate is combined with dimension dependent interpolation inequalities
which yield suboptimal dimension dependent estimates for the double-well  nonlinearity. 
In the stochastic case with the space-time white noise, 
an analogous approach restricts the analysis to spatial dimension $d=2$, cf. \cite{BYZ22}.

%{\red This therefore restricts the space in which  convergence analyses are studied to $L^3(\mathcal{D}_T)$ as the largest possible one.  
%To handle cubic nonlinearities in dimension $d=3$, one needs a further restriction, namely $p\leq \frac{2d+8}{d+2}=\frac{14}{5}$, see e.g., \cite{abk18,abc94}. }

The (probabilistically) strong variational solution of the stochastic Cahn-Hilliard equation enjoys the following regularity : (i) $u\in L^4\left(\Omega; C([0, T], \mathbb{L}^4)\right)\cap L^2\left(\Omega; L^2(0, T; \mathbb{H}^1)\right)$ for 
{sufficiently regular} trace class noise (see \cite[Proposition 2.2]{Debussche1}) and (ii) $u\in L^4\left(\Omega; C([0, T], \mathbb{L}^4)\right)\cap L^2\left(\Omega; L^2(0, T; \mathbb{H}^{2-\frac{d}{2}-\vartheta})\right)$
for {arbitrary} $\vartheta>0$ for  space-time white noise (see \thmref{regularitytheorem} below).
In the present work, 
instead of employing the general formula (\ref{xxx1}), we estimate the double-well nonlinearity by an explicit calculation
and use a new interpolation inequality (\lemref{Fundamentallemma} below). This approach yields
error estimates which are optimal with respect to the aforementioned regularity of the solution of the (stochastic) Cahn-Hilliard equation and also allows us to generalize the analysis to the case 
of the space-time white noise in dimension $d=3$.
%Note that even for $d=2$ the error estimates in \cite{abc94,abk18,BYZ22} are only in $L^3(\mathcal{D}_T)$, while here we have error estimates in $L^4(\mathcal{D}_T)$.

%\subsection{Outline of our results}
The main contributions of the present paper are the following.
\begin{itemize}
\item[(i)] We prove \eqref{Alakiresult1} for $p\in(2,4]$ for any $d=2,3$, see \thmref{mainresult2}. {This improves \cite[Theorem 2.1]{abc94} for the double-well potential}.
\item[(ii)] We prove \eqref{Antonopoulouresult1} and \eqref{Antonopoulouresult2} for stochastic Cahn-Hilliard equation driven by space-time white noise in dimension $d=2,3$ with $p\in(2,4]$, see \thmref{mainresult1}. 
%Note that this differs from the results in \cite{BYZ22} in the following two points.
%\begin{itemize}
%\item[(a)] we obtained the convergence result in the largest space $L^4(\mathcal{D}_T)$.
%\item[(b)] Only the case $d=2$ was  investigated in \cite{BYZ22}, while here we consider $d=2,3$. Note that  some essential Sobolev embeddings useful to handle the cubic term valid in dimension $d=2$ are no longer valid in dimension $d=3$.
%\end{itemize} 
\item[(iii)] We derive an analogue of the error estimate \eqref{Antonopoulouresult3} (see \thmref{mainresult1}) in fractional Sobolev spaces
\begin{align*}
%\label{LuboDaniel1}
\mathbb{P}\left(\left\{\Vert u^{\epsilon}-u^{\epsilon}_{\mathsf{A}}\Vert^2_{L^2(0, T; \mathbb{H}^{2-\frac{d}{2}-\vartheta})}\leq C\epsilon^{\frac{\gamma}{3}  -4}\right\}\right)\geq 1-C_{\delta,\eta}\epsilon^{\delta+\eta}-C_{\vartheta,\kappa}\epsilon^{\vartheta+\kappa}, 
\end{align*}
for any $\vartheta,\delta>0$ and $\kappa, \eta\geq 0$. We observe that for $d=2$, this leads to an error estimate almost in $\mathbb{H}^1$ and for $d=3$ this leads to an error almost in $\mathbb{H}^{\frac{1}{2}}$.  
Note that it is not clear whether an error estimate in $L^2(0, T; \mathbb{H}^1)$ is achievable in the low regularity setting of  space-time white noise.
%{ \red \item[(iv)]  (remove?) We derive an error estimate for the difference between the solutions of the stochastic and deterministic Cahn-Hilliard equations in norms that are compatible with the regularity of the solutions, see Corollary. }
%{ \red \item[(iv)]  (UPDATE) In section 5+6 we obtain improvements in the case of more regular noise and in the deterministic case. }
 \item[(iv)] We identify minimal regularity properties of the noise required for the $\mathbb{H}^1$ convergence \eqref{Antonopoulouresult3} to hold, see \secref{sec_h1}. The condition is weaker than the one required in \cite[Assumption 3.1]{abk18}.  
\end{itemize}      

We adopt the approach of \cite{Debussche1}, \cite{BYZ22} which is based on introducing the stochastic convolution \eqref{Convolution}
and studying the translated solution $Y^\eps := u^{\epsilon}-u^{\epsilon}_{\mathsf{A}}-Z^{\epsilon}$. 
We derive optimal a priori estimate for the translated solution $Y^\eps$ in \lemref{Formulalemma}. 
{In order to obtain estimates that are robust with respect to the interfacial width parameter $\eps$ (i.e., to avoid the use of Gronwall's lemma) 
we employ the lower bound of the principal eigenvalue of the linearized (deterministic) Cahn-Hilliard equation, cf. \cite[Theorem 3.1]{abc94}}.  
In order to overcome the barrier $p\leq \frac{2d+8}{d+2}$ 
we make use of a new interpolation inequality (see \lemref{Fundamentallemma}). 
%{\red This interpolation inequality is also an important ingredient to handle the higher dimensional case $d=3$ for the stochastic Cahn-Hilliard equation with space-time white noise.}
To deal with the low regularity of the space-time white noise
we benefit from  the smoothing properties of the semigroup generated by the bi-Laplacian $\Delta^2$. Consequently we obtain an estimate for the convolution  \eqref{Convolution}
in fractional Sobolev spaces in \lemref{bruitlemma1}, which allows us to derive the error estimate \eqref{result5} below.
%{which allow us to build subsets of high probability $\widetilde{\Omega}_{\vartheta,\kappa,\epsilon}\subset\Omega$ (see \eqref{defOmega2} for a precise definition), see \lemref{Semigroup}.}

%\subsection{Outline of the paper}
 The paper is organized as follows.
 We introduce the notation and preliminary results in  \secref{sec_notation}.
Some useful regularity properties of the variational solution of \eqref{sch} are presented in 
\secref{sec_exist}. 
The sharp interface limit of the stochastic Cahn-Hilliard equation is analyzed in \secref{sec_stoch}.
The corresponding results for more regular noise are summarized in \secref{sec_h1} and the deterministic problem is analyzed in \secref{sec_det}.

\section{Notations and preliminaries}
\label{sec_notation}
By $\mathbb{L}^p:=L^p(\mathcal{D})$ we denote the standard Lebesgue space of $p$-th order integrable functions
on $\mathcal{D}$. The $\mathbb{L}^2$ inner product is denoted as $(.,.)$ and the associated norm as $\Vert.\Vert$. 
For $g\in \mathbb{L}^2$, we denote by $m(g)$ the average of $g$, given by $m(g):=\frac{1}{\vert \mathcal{D}\vert}\int_{\mathcal{D}}g(x)dx$. We denote by  $\mathbb{L}^2_0:=\{g\in \mathbb{L}^2:\; m(g)=0\}$. 
For $s\in\mathbb{R}$ we denote the standard Sobolev space on $\mathcal{D}$ by $\mathbb{H}^s:=H^s(\mathcal{D})$.

The Neumann Laplace operator $-\Delta$ with domain $ D(-\Delta)=\{u\in \mathbb{H}^2:\; \frac{\partial u}{\partial \vec{n}}=0\;\text{on}\; \partial\mathcal{D}\}$ 
is self-adjoint, positive and has compact resolvent. 
We consider an orthonormal basis of $\mathbb{L}^2$ 
consisting of eigenvectors $\{e_j\}_{j\in\mathbb{N}^d}$ of the Neumann Laplacian, with corresponding eigenvalues $(\lambda_j)$ such that
$0=\lambda_0 < \lambda_1\leq \lambda_2\leq \cdots \lambda_j\longrightarrow +\infty$.
Note that for $k=(k_1,\cdots, k_d)\in \mathbb{N}^d$, $\lambda_k$ satisfies $\lambda_k\simeq \vert k\vert^2$, where $\vert k\vert^2=\lambda_1^2+\cdots+\lambda_d^2$  and 
%{\magenta for $\alpha<-\frac{d}{2}$} 
it holds that
\begin{align}
\label{convergencecondition}
\sum_{j\in \mathbb{N}^d}\lambda_j^{\alpha}<+\infty  \quad \text{iff} \quad \alpha<-\frac{d}{2}.
\end{align} 

For $s\in \mathbb{R}$ and $u\in \mathbb{L}^2$ we define the fractional Laplacian $(-\Delta)^s$ as
\begin{align}
\label{Fractionaire1}
(-\Delta)^su=\sum_{j\in \mathbb{N}^d}\lambda^{s}_ju_je_j\quad \text{for}\; u=\sum_{j\in \mathbb{N}^d}u_je_j,
\end{align}
where the domain of $(-\Delta)^{\frac{s}{2}}$ is given by
\begin{align*}
D((-\Delta)^{\frac{s}{2}}):=\left\{u=\sum_{j\in\mathbb{N}^d}u_je_j:\;\sum_{j\in\mathbb{N}^d}\lambda_j^su_j^2<\infty\right\}.
\end{align*}
We introduce the seminorm and semiscalar product
\begin{align*}
\vert v\vert_s=\Vert (-\Delta)^{\frac{s}{2}}v\Vert\quad \text{and}\quad ( u, v)_s=\left( (-\Delta)^{\frac{s}{2}}u, (-\Delta)^{\frac{s}{2}}v\right)
\quad u, v\in D((-\Delta)^{\frac{s}{2}})
\end{align*} 
as well as the  norm
\begin{align*}
\Vert v\Vert_s=\left(\vert v\vert^2_s+m^2(v)\right)^{\frac{1}{2}}\quad v\in D((-\Delta)^{\frac{s}{2}}). 
\end{align*}
For $s\in[0, 2]$ the norm $\Vert.\Vert_s$ is equivalent to the usual norm on $\mathbb{H}^s$ and $D((-\Delta)^{\frac{s}{2}})$ is a closed subspace of $\mathbb{H}^s$,
see, e.g., \cite[Section 2.1]{Debussche1}.
% We denote by $\mathcal{L}^0_2$ the space of Hilbert-Schmidt operators from $\mathbb{L}^2$ to itself, equipped with the norm 
%\begin{align}
%\label{HilbertSchmidtnorm}
%\Vert L\Vert_{\mathcal{L}^0_2}=\left(\sum_{i=1}^{\infty}\Vert Le_i\Vert^2\right)^{\frac{1}{2}},\quad L\in \mathcal{L}^0_2.
%\end{align}
% Note that \eqref{HilbertSchmidtnorm} is independent of the choice of the orthonormal basis of $\mathbb{L}^2$. 
 
The term  $W$ in \eqref{sch} is the space-time white noise, which is formally represented as
\begin{align}
\label{SpaceTimeWhiteNoise}
W(x,t)=\sum_{j\in\mathbb{N}^d}\beta_j(t)e_j(x),
\end{align}
where $\beta_j$ are independent and identically distributed standard Brownian motions on a filtered probability space $(\Omega, \mathcal{F}, \{\mathcal{F}_t\}_t, \mathbb{P})$,
cf. \cite{Debussche1}. 
{Note that the space-time white noise enjoys the zero-mean property, i.e., it holds that $m(W) = 0$.}

We make use of the following spectral estimate of the deterministic problem, cf. \cite[Proposition 3.1]{abc94}.
 \begin{proposition}
 \label{Spectralestimate}
 Let $u^{\epsilon}_{\mathsf{A}}$ be the approximation in \eqref{model5}. Then for all $w\in \mathbb{H}^1$  with $\int_{\mathcal{D}}wdx=0$, the following   holds
 \begin{align*}
 \int_{\mathcal{D}}\left(\epsilon\vert\nabla w\vert^2+\frac{1}{\epsilon}f'(u^{\epsilon}_{\mathsf{A}})w^2\right)\geq -C_0\Vert w\Vert^2_{\mathbb{H}^{-1}},
 \end{align*}
 where $C_0\geq 0$ is a constant independent of $w$ and $\epsilon$. 
 \end{proposition}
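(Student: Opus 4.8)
Since Proposition~\ref{Spectralestimate} is quoted verbatim from \cite[Proposition 3.1]{abc94}, the cleanest route is to invoke that reference directly; nevertheless, here is the strategy I would follow to reconstruct it. The plan is to exploit the fact that the only place where the quadratic form
$$Q(w):=\int_{\mathcal{D}}\Big(\epsilon|\nabla w|^2+\tfrac{1}{\epsilon}f'(u^{\epsilon}_{\mathsf{A}})w^2\Big)$$
can become negative is the diffuse interface layer, a tubular neighbourhood of $\Gamma_t$ of width $O(\epsilon)$: away from $\Gamma_t$ one has $u^{\epsilon}_{\mathsf{A}}\approx\pm1$, hence $f'(u^{\epsilon}_{\mathsf{A}})\approx f'(\pm1)=2>0$ and the integrand is nonnegative, whereas inside the layer $f'(u^{\epsilon}_{\mathsf{A}})$ is close to $f'(0)=-1$ and the factor $1/\epsilon$ makes the negative contribution dangerous. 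First I would introduce the signed distance $r=d(x)$ to $\Gamma_t$ and a tangential coordinate $s$, recall that to leading order $u^{\epsilon}_{\mathsf{A}}(x)\approx\theta(r/\epsilon)$, where $\theta$ is the optimal profile solving $\theta''=f(\theta)$, $\theta(\pm\infty)=\pm1$, $\theta(0)=0$, and rewrite $Q$ in these coordinates (stretching $z=r/\epsilon$), keeping track of the Jacobian and of the curvature corrections.

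Second, I would invoke the one-dimensional spectral information attached to the profile. Differentiating $\theta''=f(\theta)$ gives $A\theta'=0$ for the Schr\"odinger-type operator $A:=-\partial_{zz}+f'(\theta(z))$ on $\mathbb{R}$; since $\theta'>0$, it is the ground state, so $A\ge 0$ and there is a spectral gap $A\ge\lambda_1>0$ on the orthogonal complement of $\mathrm{span}\{\theta'\}$ in $L^2(\mathbb{R})$. I would then split $w$ on each normal slice as $w=\rho(s)\theta'(r/\epsilon)+w^{\perp}$, with $w^{\perp}(\cdot,s)\perp\theta'$ in the normal variable. For the transverse part $w^{\perp}$ the spectral gap yields a nonnegative (indeed coercive) contribution to $Q$, so the entire difficulty is concentrated in the neutral (translation) mode $\rho(s)\theta'(r/\epsilon)$.

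Third, I would compute $Q$ on the neutral mode and observe the crucial cancellation: the leading $O(1)$ term equals $\big(\int_{\mathbb{R}}(\theta'')^2+f'(\theta)(\theta')^2\,dz\big)\int_{\Gamma_t}\rho^2\,ds$, and integration by parts together with $A\theta'=0$ shows $\int_{\mathbb{R}}(\theta'')^2\,dz=-\int_{\mathbb{R}}f'(\theta)(\theta')^2\,dz$, so this term vanishes identically. Consequently the only possible negativity comes from the lower-order remainder generated by the curvature of $\Gamma_t$, the Jacobian of the coordinate change, and the discrepancy $u^{\epsilon}_{\mathsf{A}}-\theta(r/\epsilon)$; these are controlled from below by $-C\|\rho\|^2$ in a suitable weak norm on $\Gamma_t$.

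The main obstacle is the final step: relating the amplitude $\rho$ of the neutral mode to $\Vert w\Vert_{\mathbb{H}^{-1}}$, which is exactly what distinguishes the Cahn-Hilliard estimate (with the weaker $\mathbb{H}^{-1}$ norm on the right) from the Allen-Cahn estimate (which only yields $-C\Vert w\Vert^2$). The geometric content is that the neutral mode $\theta'(r/\epsilon)$ generates normal displacement of the interface, and displacing $\Gamma_t$ redistributes mass between the two phases; the zero-mean constraint $\int_{\mathcal{D}}w=0$ forces this redistribution to be mass conserving, and its cost is measured precisely by the $\mathbb{H}^{-1}$-norm, the natural metric of the Cahn-Hilliard gradient flow. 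Making this quantitative, i.e., establishing $\|\rho\|^2\le C\Vert w\Vert^2_{\mathbb{H}^{-1}}$ uniformly in $\epsilon$ while simultaneously verifying that every curvature/correction remainder is dominated by the same quantity with an $\epsilon$-independent constant, is the delicate point, and is where the careful $\epsilon$-power bookkeeping of \cite{abc94} is indispensable; absent a need to re-derive it, I would simply cite \cite[Proposition 3.1]{abc94}.
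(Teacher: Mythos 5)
The paper gives no proof of this proposition at all: it is quoted directly from \cite[Proposition 3.1]{abc94}, which is precisely your primary recommendation, so your treatment matches the paper's. Your accompanying sketch (neutral-mode decomposition, spectral gap of the one-dimensional operator $-\partial_{zz}+f'(\theta)$ with $\theta'$ as ground state, the cancellation $\int_{\mathbb{R}}(\theta'')^2+f'(\theta)(\theta')^2\,dz=0$, and control of the translation mode via the mass constraint and the $\mathbb{H}^{-1}$ norm) is a faithful outline of how the cited result is actually proved, but no such reconstruction is required to match the paper.
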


\section{Existence and regularity of the solution}\label{sec_exist}
In this section we summarize existence and regularity properties of the solution of the stochastic Cahn-Hilliard  equation (\ref{sch})
with  space-time white noise.

We introduce the stochastic convolution
\begin{equation}
\label{Convolution}
Z^{\epsilon}(t):=\epsilon^{\sigma}\int_0^te^{-(t-s)\epsilon\Delta^2}dW(s)=\epsilon^{\sigma}\sum_{i\in\mathbb{N}^d}\int_0^te^{-\lambda_i^2(t-s)\epsilon}e_id\beta_i(s)\quad t\in[0, T].
\end{equation}
 The following two lemmas will be useful in the rest of this paper.
\begin{lemma}
\label{convollemma}
For any $p\in[1, \infty)$, there exists a  constant $C=C(p)\geq 0$ such that
\begin{align*}
\mathbb{E}\left[\sup_{t\in[0, T]} \Vert Z^{\epsilon}(t)\Vert^p_{\mathbb{L}^p}\right]\leq C(p)\epsilon^{(\sigma-\frac{1}{2})p}.
\end{align*}
\end{lemma}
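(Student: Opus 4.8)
The plan is to bound the $p$-th moment of the supremum of the stochastic convolution $Z^\epsilon$ by combining the factorization (stochastic Fubini / Da Prato--Kwapie\'{n}--Zabczyk) method with explicit kernel estimates coming from the bi-Laplacian semigroup $e^{-t\epsilon\Delta^2}$. First I would recall that $Z^\epsilon$ solves the linear equation $dZ^\epsilon = -\epsilon\Delta^2 Z^\epsilon\,dt + \epsilon^\sigma\,dW$, so that the series representation in \eqref{Convolution} holds. Because we need a supremum-in-time estimate rather than a pointwise-in-time one, the natural tool is the factorization formula: for suitable $\alpha\in(0,1)$ one writes
\begin{align*}
Z^\epsilon(t)=\frac{\sin(\pi\alpha)}{\pi}\epsilon^{\sigma}\int_0^t (t-s)^{\alpha-1}e^{-(t-s)\epsilon\Delta^2}Y^\epsilon_\alpha(s)\,ds,\qquad
Y^\epsilon_\alpha(s):=\int_0^s(s-r)^{-\alpha}e^{-(s-r)\epsilon\Delta^2}\,dW(r).
\end{align*}
This reduces the supremum estimate to an $L^p$-in-time estimate on $Y^\epsilon_\alpha$ (via H\"{o}lder and the boundedness of the convolution operator on $L^p(0,T;\mathbb{L}^p)$, which needs $\alpha p > 1$) together with moment bounds on $Y^\epsilon_\alpha$.

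The key step is to estimate $\mathbb{E}\Vert Y^\epsilon_\alpha(s)\Vert^p_{\mathbb{L}^p}$. First I would reduce to the Gaussian second moment by invoking the Burkholder--Davis--Gundy inequality together with the equivalence of Gaussian moments (since $Y^\epsilon_\alpha(s,x)$ is, for fixed $(s,x)$, a centered Gaussian random variable): one has $\mathbb{E}|Y^\epsilon_\alpha(s,x)|^p\le C(p)\big(\mathbb{E}|Y^\epsilon_\alpha(s,x)|^2\big)^{p/2}$, and by Fubini it then suffices to control the pointwise variance
\begin{align*}
\mathbb{E}|Y^\epsilon_\alpha(s,x)|^2=\epsilon^{2\sigma}\sum_{i\in\mathbb{N}^d}e_i(x)^2\int_0^s(s-r)^{-2\alpha}e^{-2\lambda_i^2(s-r)\epsilon}\,dr.
\end{align*}
Performing the time integral and bounding $e_i(x)^2\le C$ uniformly, I estimate the resulting sum $\sum_i \lambda_i^{-2(1-\alpha)}\simeq \sum_i |i|^{-4(1-\alpha)}$, which converges precisely when $4(1-\alpha)>d$, i.e.\ $\alpha<1-\tfrac{d}{4}$. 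Extracting the power of $\epsilon$ from the substitution $\tau=(s-r)\epsilon$ gives a factor $\epsilon^{-(1-2\alpha)}$ out of the time integral, so that $\mathbb{E}|Y^\epsilon_\alpha(s,x)|^2\le C\,\epsilon^{2\sigma}\epsilon^{2\alpha-1}=C\,\epsilon^{2\sigma-1}\epsilon^{2\alpha}$, uniformly in $s,x$. Tracking the powers through the Gaussian moment bound and through the factorization identity (the prefactor $\epsilon^\sigma$ of $Z^\epsilon$ is shared between the two convolutions, so it should be distributed consistently) yields the advertised order $\epsilon^{(\sigma-\frac12)p}$.

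Collecting the pieces: the $L^p(0,T;\mathbb{L}^p)$ norm of $Y^\epsilon_\alpha$ has $p$-th moment bounded by $C\,\epsilon^{(\sigma-\frac12)p}$, and the factorization operator $g\mapsto \int_0^t(t-s)^{\alpha-1}e^{-(t-s)\epsilon\Delta^2}g(s)\,ds$ maps $L^p(0,T;\mathbb{L}^p)$ boundedly into $C([0,T];\mathbb{L}^p)$ with a constant independent of $\epsilon$ (here one uses the contraction property $\Vert e^{-t\epsilon\Delta^2}\Vert_{\mathbb{L}^p\to\mathbb{L}^p}\le 1$, so no additional $\epsilon$-power appears from the outer convolution). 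Taking the supremum in $t$ and then expectation gives the claim. The main obstacle I anticipate is the bookkeeping of the compatible range for $\alpha$: one needs simultaneously $\alpha p>1$ (for the H\"{o}lder/boundedness step producing the supremum) and $\alpha<1-\tfrac{d}{4}$ (for summability of the series). For $p\ge 2$ and $d\le 3$ these are jointly satisfiable for all finite $p$, but verifying this and confirming that the exponent of $\epsilon$ is exactly $(\sigma-\tfrac12)p$ independently of the admissible $\alpha$ (the $\alpha$-dependence must cancel, which is the delicate part of the power-counting) is where the care is required.
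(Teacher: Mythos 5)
Your factorization route is a genuinely different (and in principle legitimate) strategy from the paper's proof, which is far more direct: the paper pulls the supremum inside the spatial integral, applies the BDG-type inequality of \cite[Theorem 4.36]{DaPratoZabczyk} to each scalar convolution $\sum_i\int_0^te^{-\lambda_i^2(t-s)\epsilon}e_i(x)\,d\beta_i(s)$, bounds $\int_0^Te^{-2\lambda_i^2(T-s)\epsilon}\,ds\leq(2\lambda_i^2\epsilon)^{-1}$, and concludes from $\sum_i\lambda_i^{-2}<\infty$, valid for $d=2,3$ by \eqref{convergencecondition}. However, your execution contains a concrete power-counting error that breaks the stated range of admissible $\alpha$ and, with it, your coverage of $p$. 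Since the kernel of $Y^\epsilon_\alpha$ is $(s-r)^{-\alpha}$, the variance involves $(s-r)^{-2\alpha}$, and performing the time integral produces $(\lambda_i^2\epsilon)^{2\alpha-1}$; the relevant series is therefore $\sum_i\lambda_i^{-2(1-2\alpha)}$, not $\sum_i\lambda_i^{-2(1-\alpha)}$. By \eqref{convergencecondition} it converges iff $\alpha<\tfrac12-\tfrac d8$, not $\alpha<1-\tfrac d4$. Combined with the requirement $\alpha p>1$, your argument as written only applies for $p>\tfrac{8}{4-d}$, i.e.\ $p>8$ in $d=3$ and $p>4$ in $d=2$; so the claim that the two constraints are ``jointly satisfiable for all finite $p\geq2$'' is false (it already fails for $p\in[2,4]$ in $d=3$, even under your incorrect constraint), and the lemma is asserted for all $p\in[1,\infty)$. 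The repair is standard but must be stated: prove the bound for one large exponent $q>\tfrac{8}{4-d}$ and deduce it for $p<q$ from $\Vert\cdot\Vert_{\mathbb{L}^p}\leq C\Vert\cdot\Vert_{\mathbb{L}^q}$ on the bounded domain $\mathcal{D}$ together with Jensen's inequality, using that the exponent $(\sigma-\tfrac12)p$ is linear in $p$.

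Two further points. First, your appeal to the ``contraction property'' $\Vert e^{-t\epsilon\Delta^2}\Vert_{\mathbb{L}^p\to\mathbb{L}^p}\leq1$ is incorrect for $p\neq2$: the biharmonic semigroup is not positivity preserving and is not an $\mathbb{L}^p$-contraction. What is true, and suffices for your argument, is uniform boundedness $\sup_{t\geq 0}\Vert e^{-t\epsilon\Delta^2}\Vert_{\mathbb{L}^p\to\mathbb{L}^p}\leq C$ with $C$ independent of $\epsilon$ (by the time-rescaling $s=t\epsilon$ this reduces to uniform $\mathbb{L}^p$-boundedness of $e^{-s\Delta^2}$, which follows from kernel estimates for higher-order semigroups); this is what you should invoke, since the only point that matters is that no $\epsilon$-power enters through the outer convolution. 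Second, the ``delicate cancellation'' you worry about is a non-issue: carried out correctly, the method yields $\mathbb{E}\bigl[\sup_{t}\Vert Z^{\epsilon}(t)\Vert^p_{\mathbb{L}^p}\bigr]\leq C\epsilon^{(\sigma+\alpha-\frac12)p}$, and since $\alpha>0$ and $\epsilon\leq1$ this implies the claimed bound $C\epsilon^{(\sigma-\frac12)p}$; the $\alpha$-dependence does not cancel but strictly improves the estimate (letting $\alpha\uparrow\tfrac12-\tfrac d8$ even gives the sharper order $\epsilon^{(\sigma-\frac d8)p}$). With these repairs your proof goes through, but as submitted the summability constraint, the claimed range of $p$, and the contraction step are all genuinely wrong.
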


\begin{proof}
Taking the expectation in both sides of \eqref{Convolution} yields
\begin{align*}
\mathbb{E}\left[\sup_{t\in[0, T]}\Vert Z^{\epsilon}(t)\Vert^p_{\mathbb{L}^p}\right]=\epsilon^{\sigma p}\int_{\mathcal{D}}\mathbb{E}\left[\sup_{t\in[0, T]}\left\vert\sum_{i\in\mathbb{N}^d}\int_0^te^{-\lambda_i^2(t-s)\epsilon}d\beta_i(s)e_i(x)\right\vert^p\right]dx.
\end{align*}
Using the Burkholder-Davis-Gundy (BDG) inequality  \cite[Theorem 4.36]{DaPratoZabczyk} and the uniform boundedness of $(e_i)_{i\in \mathbb{N}}$, we obtain
\begin{align*}
\mathbb{E}\left[\sup_{t\in[0, T]}\Vert Z^{\epsilon}(t)\Vert^p_{\mathbb{L}^p}\right]
%&\leq& \epsilon^{\sigma p}\int_{\mathcal{D}}\left(\mathbb{E}\vert\sum_{i\in\mathbb{N}^d}\int_0^Te^{-\lambda_i^2(T-s)\epsilon}d\beta_i(s)e_i(x)\vert^2\right)^{\frac{p}{2}} dx\nonumber\\
& \leq C(p)\epsilon^{\sigma p}\int_{\mathcal{D}}\left(\sum_{i\in\mathbb{N}^d}\int_0^T {\left(e^{-\lambda_i^2(T-s)\epsilon}\right)^{2}}ds\right)^{\frac{p}{2}}dx\\
&\leq C(p)\epsilon^{\sigma p}\int_{\mathcal{D}}\left(\sum_{i\in\mathbb{N}^d}\int_0^T e^{-2\lambda_i^2(T-s)\epsilon}ds\right)^{\frac{p}{2}}\\
& \leq C(p)\epsilon^{(\sigma-\frac{1}{2})p}\int_{\mathcal{D}}\left(\sum_{i\in\mathbb{N}^d}\frac{1}{\lambda_i^2}\right)^{\frac{p}{2}}dx\leq C(p)\epsilon^{(\sigma-\frac{1}{2})p},
\end{align*}
where in the last step we used the fact that  $\sum_{i\in\mathbb{N}^d}\lambda_i^{-2}<\infty$, since $d=2,3$, see \eqref{convergencecondition}.
\end{proof}
 
 \begin{lemma}
 \label{bruitlemma1}
 For any $\vartheta>0$,  $p\geq 2$, there is a constant $C\geq 0$ such that
 \begin{align*}
 \mathbb{E}\left[\sup_{t\in[0, T]}\Vert (-\Delta)^{1-\frac{d}{4}-\frac{\vartheta}{2}}Z^{\epsilon}(t)\Vert^p\right]\leq C\epsilon^{\left(\sigma-\frac{1}{2}\right)p}.
 \end{align*}
 \end{lemma}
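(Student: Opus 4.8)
The plan is to proceed exactly as in \lemref{convollemma}, but to insert the fractional operator $(-\Delta)^{1-\frac{d}{4}-\frac{\vartheta}{2}}$ and use the smoothing of the semigroup generated by $\epsilon\Delta^2$ to absorb the resulting powers of the eigenvalues. First I would apply $(-\Delta)^{1-\frac{d}{4}-\frac{\vartheta}{2}}$ to the spectral representation \eqref{Convolution}, which by the definition \eqref{Fractionaire1} simply multiplies the $i$-th mode by $\lambda_i^{\,2-\frac{d}{2}-\vartheta}$ (recalling $(-\Delta)^s$ multiplies by $\lambda_i^s$). I would then compute the $p$-th moment of the $\mathbb{L}^2$-norm; since $p\geq 2$, after fixing the supremum at $t=T$ I apply the Burkholder–Davis–Gundy inequality \cite[Theorem 4.36]{DaPratoZabczyk} together with Parseval's identity in $\mathbb{L}^2$ (rather than the pointwise-in-$x$ estimate of \lemref{convollemma}, since here we measure in an $\mathbb{L}^2$-based norm and need no uniform bound on the $e_i$). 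This reduces the estimate to controlling
\begin{align*}
\left(\sum_{i\in\mathbb{N}^d}\lambda_i^{\,4-d-2\vartheta}\int_0^T e^{-2\lambda_i^2(T-s)\epsilon}\,ds\right)^{\frac{p}{2}}.
\end{align*}

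The core of the argument is then the time integral: $\int_0^T e^{-2\lambda_i^2(T-s)\epsilon}\,ds \leq \frac{1}{4\lambda_i^2\epsilon}$, which contributes the crucial factor $\epsilon^{-1}$ and lowers the eigenvalue power by $2$. Substituting yields the series
\begin{align*}
\sum_{i\in\mathbb{N}^d}\lambda_i^{\,4-d-2\vartheta}\cdot\frac{1}{\lambda_i^2} = \sum_{i\in\mathbb{N}^d}\lambda_i^{\,2-d-2\vartheta},
\end{align*}
and I would invoke the summability criterion \eqref{convergencecondition}, which guarantees convergence precisely when the exponent is strictly below $-\frac{d}{2}$. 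Here the exponent is $2-d-2\vartheta$, so the condition becomes $2-d-2\vartheta < -\frac{d}{2}$, i.e.\ $\vartheta > \frac{1}{2}-\frac{d}{4}$; since $d=2,3$ this right-hand side is nonpositive, so the series converges for every $\vartheta>0$. Collecting the $\epsilon^{-1}$ from the time integral against the prefactor $\epsilon^{\sigma p}$ gives $\epsilon^{\sigma p}(\epsilon^{-1})^{p/2}=\epsilon^{(\sigma-\frac12)p}$, which is the claimed rate.

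The main obstacle, or rather the main point requiring care, is the bookkeeping of the fractional exponent: one must verify that raising the regularity index to $1-\frac{d}{4}$ (equivalently putting $2-\frac{d}{2}$ derivatives on $Z^\epsilon$ via the squared norm) is exactly compatible with the $\epsilon^{-1}$ gain from the parabolic semigroup, leaving a convergent series for all $\vartheta>0$ in dimensions $d=2,3$. I would emphasize that the margin is tight: the $\vartheta>0$ truncation is what separates the borderline non-summable case ($\vartheta=0$ would give exponent $2-d$, which fails to be below $-\frac d2$ for $d=2,3$) from the summable regime, and this is the source of the $\vartheta$-loss appearing in the regularity statement of \thmref{regularitytheorem} and in the fractional Sobolev error estimate advertised in the introduction. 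A secondary technical remark is that the supremum over $t$ is handled by the BDG inequality applied to the $\mathbb{L}^2$-valued stochastic integral, after which the bound is uniform in $t\in[0,T]$ because replacing $T-s$ by the worst case only enlarges the exponential; no Gronwall argument is needed.
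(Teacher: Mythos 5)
There is a genuine gap: your exponent bookkeeping for the fractional Laplacian is wrong, and the summability check---the heart of the lemma---only appears to close because a second algebra slip cancels the first error. By \eqref{Fractionaire1}, the operator $(-\Delta)^{s}$ with $s=1-\frac{d}{4}-\frac{\vartheta}{2}$ multiplies the $i$-th mode by $\lambda_i^{1-\frac{d}{4}-\frac{\vartheta}{2}}$, not by $\lambda_i^{2-\frac{d}{2}-\vartheta}$ as you assert; the exponent $2-\frac{d}{2}-\vartheta$ is what appears only \emph{after} squaring inside the quadratic variation. The BDG/Parseval step should therefore produce
\begin{align*}
\left(\sum_{i\in\mathbb{N}^d}\lambda_i^{2-\frac{d}{2}-\vartheta}\int_0^T e^{-2\lambda_i^2(T-s)\epsilon}\,ds\right)^{\frac{p}{2}},
\end{align*}
whereas you wrote $\lambda_i^{4-d-2\vartheta}$, the square of the correct factor. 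With your exponent, the time integral (which is bounded by $\frac{1}{2\lambda_i^2\epsilon}$, incidentally, not $\frac{1}{4\lambda_i^2\epsilon}$) leaves the series $\sum_{i}\lambda_i^{2-d-2\vartheta}$, and \eqref{convergencecondition} requires $2-d-2\vartheta<-\frac{d}{2}$, i.e. $\vartheta>1-\frac{d}{4}$ --- that is $\vartheta>\frac{1}{2}$ for $d=2$ and $\vartheta>\frac{1}{4}$ for $d=3$, not $\vartheta>\frac{1}{2}-\frac{d}{4}$ as you computed. So, taken at face value, your argument proves the lemma only for $\vartheta>1-\frac{d}{4}$ and fails precisely for the small $\vartheta$ that the statement claims and that the paper needs downstream (e.g. in \thmref{regularitytheorem} and in \eqref{result5}).

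The repair is exactly the paper's computation: with the correct per-mode factor $\lambda_i^{1-\frac{d}{4}-\frac{\vartheta}{2}}$, the quadratic variation carries $\lambda_i^{2-\frac{d}{2}-\vartheta}$, the time integral lowers this to $\lambda_i^{-\frac{d}{2}-\vartheta}$, and $\sum_i\lambda_i^{-\frac{d}{2}-\vartheta}<\infty$ holds for every $\vartheta>0$ by \eqref{convergencecondition}; collecting $\epsilon^{\sigma p}\cdot\epsilon^{-\frac{p}{2}}$ then gives the claimed rate $\epsilon^{(\sigma-\frac{1}{2})p}$. Everything else in your outline is structurally identical to the paper's proof (stochastic convolution in spectral form, BDG, time-integral gain of $\epsilon^{-1}$ and two powers of $\lambda_i$, summability criterion). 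The one genuinely different, and perfectly legitimate, choice is your use of the Hilbert-space BDG inequality with Parseval in $\mathbb{L}^2$; the paper instead bounds $\Vert\cdot\Vert$ by $\Vert\cdot\Vert_{\mathbb{L}^p}$ for $p\geq 2$ and argues pointwise in $x$ using the uniform boundedness of the eigenfunctions $e_i$, exactly as in \lemref{convollemma}.
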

 \begin{proof}
From \eqref{Convolution} and \eqref{Fractionaire1}, it follows that
\begin{align}
\label{Convolution1} 
(-\Delta)^{1-\frac{d}{4}-\frac{\vartheta}{2}}Z^{\epsilon}(t)=\epsilon^{\sigma}\sum_{i\in\mathbb{N}^d}\int_0^t\lambda_i^{1-\frac{d}{4}-\frac{\vartheta}{2}}e^{-\lambda_i^2(t-s)\epsilon}e_id\beta_i(s),\quad t\in[0, T]. 
\end{align}
Taking the $\mathbb{L}^2$-norm in \eqref{Convolution1}, raising to power $p$, using the embedding $\mathbb{L}^p\hookrightarrow \mathbb{L}^2$ ($p\geq 2$), taking the supremum, the expectation  and using the BDG inequality \cite[Proposition 4.36]{DaPratoZabczyk}, it follows that
\begin{align*}
&\mathbb{E}\left[\sup_{t\in[0, T]}\Vert (-\Delta)^{1-\frac{d}{4}-\frac{\vartheta}{2}} Z^{\epsilon}(t)\Vert^p\right]
\leq C\mathbb{E}\left[\sup_{t\in[0, T]}\Vert (-\Delta)^{1-\frac{d}{4}-\frac{\vartheta}{2}} Z^{\epsilon}(t)\Vert^p_{\mathbb{L}^p}\right]\nonumber\\
&\leq C\epsilon^{\sigma p}\int_{\mathcal{D}}\mathbb{E}\left[\sup_{t\in[0, T]}\left\vert\sum_{i\in\mathbb{N}^d}\int_0^t\lambda_i^{1-\frac{d}{4}-\frac{\vartheta}{2}}e^{-\lambda_i^2(t-s)\epsilon}d\beta_i(s)e_i(x)\right\vert^p\right]dx\nonumber\\
&\leq C\epsilon^{\sigma p}\int_{\mathcal{D}}\left(\sum_{i\in\mathbb{N}^d}\int_0^T  \lambda_i^{2-\frac{d}{2}-\vartheta} e^{-2\lambda_i^2(T-s)\epsilon}ds\right)^{\frac{p}{2}}dx\nonumber\\
&\leq C\epsilon^{(\sigma-\frac{1}{2})p}\int_{\mathcal{D}}\left(\sum_{i\in\mathbb{N}^d}\lambda_i^{-\frac{d}{2}-\vartheta}\right)^{\frac{p}{2}}dx\leq C\epsilon^{(\sigma-\frac{1}{2})p},
\end{align*}
where in the last step we used the fact that  $\sum_{i\in\mathbb{N}^d}\lambda_i^{-\frac{d}{2}-\vartheta}<\infty$,  see \eqref{convergencecondition}.
 \end{proof}

 \begin{theorem}
 \label{regularitytheorem}
 Let $u^{\epsilon}_0\in \mathbb{H}^{-1}$, then there exists a unique strong variational solution $u^{\epsilon}$ of \eqref{sch}, such that 
 \begin{align*}
 u^{\epsilon}\in L^2\left(\Omega; C([0, T]; \mathbb{H}^{-1})\right)\cap L^2\left(\Omega; L^2(0, T; \mathbb{H}^{2-\frac{d}{2}-\vartheta})\right)\cap L^4\left(\Omega; L^4(0, T; \mathbb{L}^4)\right),
 \end{align*}  
   Furthermore, for any $p\geq 2$ it holds that
  \begin{align}
  \label{Regularesti}
 \mathcal{E}_p(u^{\epsilon})&:= \mathbb{E}\left[\Vert u^{\epsilon}\Vert^p_{L^{\infty}(0, T; \mathbb{H}^{-1})}+\epsilon^{\frac{p}{2}}\Vert (-\Delta)^{1-\frac{d}{4}-\frac{\vartheta}{2}}u^{\epsilon}\Vert^p_{L^2(0, T; \mathbb{L}^2)}+\frac{1}{\epsilon^{\frac{p}{2}}}\Vert u^{\epsilon}\Vert^{2p}_{L^4(0, T; \mathbb{L}^4)}\right]\nonumber\\
 &\leq C\left(\epsilon^{-\frac{p}{2}}+\epsilon^{\left(\sigma-\frac{1}{2}\right)p}+\epsilon^{\left(2\sigma-\frac{3}{2}\right)p}\right),
  \end{align}
  where $\vartheta>0$ is any  arbitrary small number.
 \end{theorem}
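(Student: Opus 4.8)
The plan is to combine a Galerkin approximation with the stochastic-convolution decomposition already set up in \eqref{Convolution}, so that all nonlinear analysis is carried out pathwise on a random PDE carrying no It\^o term. Writing $u^\epsilon = Z^\epsilon + v^\epsilon$, the subtracted process $v^\epsilon := u^\epsilon - Z^\epsilon$ formally solves
\[
\partial_t v^\epsilon = -\epsilon\Delta^2 v^\epsilon + \frac{1}{\epsilon}\Delta f(v^\epsilon + Z^\epsilon), \qquad v^\epsilon(0) = u^\epsilon_0,
\]
since $Z^\epsilon$ absorbs both the noise and the bi-Laplacian part of the drift. I would first construct finite-dimensional approximations $u^\epsilon_N$ by projecting \eqref{sch} onto the span $V_N$ of the first eigenfunctions $\{e_j\}$; the resulting system is an SDE with locally Lipschitz polynomial drift, so a local solution exists, and global existence follows from the a priori bounds below. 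Splitting $u^\epsilon_N = v^\epsilon_N + Z^\epsilon_N$ with $Z^\epsilon_N = P_N Z^\epsilon$ then reduces everything to a pathwise estimate on $v^\epsilon_N$.

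The heart of the proof is the energy estimate. Testing the equation for $v^\epsilon_N$ with $(-\Delta)^{-1}\big(v^\epsilon_N - m(v^\epsilon_N)\big)$, the conserved mean $m(u^\epsilon) = m(u^\epsilon_0)$ being handled separately, gives pathwise
\[
\frac12\frac{d}{dt}\|v^\epsilon_N\|_{\mathbb{H}^{-1}}^2 + \epsilon\|\nabla v^\epsilon_N\|^2 = -\frac{1}{\epsilon}\big(f(u^\epsilon_N), v^\epsilon_N\big).
\]
Writing $v^\epsilon_N = u^\epsilon_N - Z^\epsilon_N$ and using $f(u) = u^3 - u$, the right-hand side produces the coercive term $-\frac{1}{\epsilon}\|u^\epsilon_N\|_{\mathbb{L}^4}^4$ together with cross terms $\frac{1}{\epsilon}\int (u^\epsilon_N)^3 Z^\epsilon_N$, $\frac{1}{\epsilon}\|u^\epsilon_N\|^2$ and $\frac{1}{\epsilon}\int u^\epsilon_N Z^\epsilon_N$. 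Each of these is absorbed by Young's inequality into a small fraction of the coercive term, leaving remainders controlled by $\frac{C}{\epsilon}$, $\frac{C}{\epsilon}\|Z^\epsilon_N\|_{\mathbb{L}^4}^4$ and $\frac{C}{\epsilon}\|Z^\epsilon_N\|^2$. Integrating in time yields uniform (in $N$) control of $\|v^\epsilon_N\|_{L^\infty(\mathbb{H}^{-1})}^2$, of $\epsilon\|\nabla v^\epsilon_N\|_{L^2(\mathbb{L}^2)}^2$ and of $\frac{1}{\epsilon}\|u^\epsilon_N\|_{L^4(\mathbb{L}^4)}^4$ by a random constant of the form $\frac{C}{\epsilon}\big(1 + \sup_t\|Z^\epsilon\|_{\mathbb{L}^4}^4 + \sup_t\|Z^\epsilon\|^2\big)$. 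Raising to the power $p/2$, taking expectations, and inserting \lemref{convollemma} (which gives $\mathbb{E}\sup_t\|Z^\epsilon\|_{\mathbb{L}^q}^q \le C\epsilon^{(\sigma-\frac12)q}$) reproduces exactly the three terms $\epsilon^{-p/2}$, $\epsilon^{(\sigma-\frac12)p}$ and $\epsilon^{(2\sigma-\frac32)p}$ of \eqref{Regularesti}; the last arises from the quartic cross term $\frac{1}{\epsilon}\|Z^\epsilon\|_{\mathbb{L}^4}^4$ raised to the power $p/2$.

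For the fractional term I would split $(-\Delta)^{1-\frac{d}{4}-\frac{\vartheta}{2}}u^\epsilon = (-\Delta)^{1-\frac{d}{4}-\frac{\vartheta}{2}}v^\epsilon + (-\Delta)^{1-\frac{d}{4}-\frac{\vartheta}{2}}Z^\epsilon$. Since $2 - \frac{d}{2} - \vartheta \le 1$ for $d = 2,3$, the seminorm $|v^\epsilon|_{2-\frac{d}{2}-\vartheta}$ is dominated by $|v^\epsilon|_1 = \|\nabla v^\epsilon\|$, already controlled with the correct $\epsilon^{1/2}$ weight, while the contribution of $Z^\epsilon$ is supplied directly by \lemref{bruitlemma1}. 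Passage to the limit $N\to\infty$ can then be done pathwise: the uniform bounds together with an estimate on $\partial_t v^\epsilon_N$ in a negative-order space give, via Aubin--Lions--Simon, strong convergence of $v^\epsilon_N$ in $L^4(0,T;\mathbb{L}^4)$ (using the compact embedding $\mathbb{H}^1 \hookrightarrow\hookrightarrow \mathbb{L}^4$ valid for $d\le 3$), which suffices to identify the cubic limit; the additive noise structure means no martingale or Skorokhod argument is required. Uniqueness follows from a standard $\mathbb{H}^{-1}$ estimate on the difference of two solutions, in which $Z^\epsilon$ cancels and the monotonicity $(u_1^3 - u_2^3)(u_1-u_2)\ge 0$ lets Gronwall's inequality close the argument.

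The main obstacle I anticipate is the bookkeeping in the energy estimate: because every term carries a factor $\epsilon^{-1}$, the Young-inequality splittings must be arranged so that the coercive $-\frac{1}{\epsilon}\|u^\epsilon_N\|_{\mathbb{L}^4}^4$ genuinely absorbs all cubic cross terms, and the scaling $\epsilon^{\sigma-\frac12}$ from \lemref{convollemma} and \lemref{bruitlemma1} must be tracked precisely to land on the stated exponents rather than weaker ones. The sign and monotonicity structure of the cubic nonlinearity is what makes both the a priori bound and uniqueness robust, and keeping all estimates uniform in the Galerkin parameter $N$, so that the limit solves the full equation, is the remaining technical point.
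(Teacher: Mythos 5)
Your proposal is correct, and its analytic core coincides with the paper's proof: the same translation $\widehat{u}^{\epsilon}=u^{\epsilon}-Z^{\epsilon}$ (your $v^{\epsilon}$), the same test function $(-\Delta)^{-1}$ applied to the translated variable, the same absorption of the cubic cross terms into the coercive quartic term $\frac{1}{4\epsilon}\Vert u^{\epsilon}\Vert^4_{\mathbb{L}^4}$ via H\"older/Young, the same treatment of the fractional norm by factoring out the bounded operator $(-\Delta)^{\frac12-\frac{d}{4}-\frac{\vartheta}{2}}$, and the same use of Lemmas \ref{convollemma} and \ref{bruitlemma1} after raising the pathwise bound to the power $p/2$; your bookkeeping lands on exactly the stated exponents $\epsilon^{-\frac{p}{2}}$, $\epsilon^{(\sigma-\frac12)p}$, $\epsilon^{(2\sigma-\frac32)p}$ (the extra remainder of order $\epsilon^{(\sigma-1)p}$ produced by your term $\frac{C}{\epsilon}\Vert Z^{\epsilon}\Vert^2$ is dominated by the first and third via Young's inequality, so nothing is lost). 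The one genuine difference is the existence/uniqueness part: the paper does not reprove it but cites \cite{Debussche1} for the unique strong variational solution and its $L^2\left(\Omega; C([0, T]; \mathbb{H}^{-1})\right)$ regularity, whereas you rebuild it from scratch by spectral Galerkin approximation, Aubin--Lions compactness, and an $\mathbb{H}^{-1}$ monotonicity argument for uniqueness. Your route is self-contained and independent of the cited result, and your explicit handling of the conserved mean when inverting $-\Delta$ is tidier than the paper's, which glosses over it; the paper's route is shorter and avoids the compactness technicalities. On that point, one small repair is needed in your limit passage: Aubin--Lions with $\mathbb{H}^1\hookrightarrow\hookrightarrow\mathbb{L}^4$ yields strong convergence in $L^2(0,T;\mathbb{L}^4)$, not in $L^4(0,T;\mathbb{L}^4)$ as you state; you must combine it with the uniform $L^4(0,T;\mathbb{L}^4)$ bound and a Vitali/a.e.-convergence argument to identify the limit of the cubic term --- standard, but not literally what you wrote.
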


 \begin{proof}
 The proof of the existence and the uniqueness, as well as the proof of the fact that $u^{\epsilon}$ belongs to $L^2\left(\Omega; C([0, T]; \mathbb{H}^{-1})\right)$ can be found in \cite[Theorem 2.1]{Debussche1}.  To prove that $u^{\epsilon}$ belongs to $L^4\left(\Omega; L^4(0, T; \mathbb{L}^4)\right)$ and to $L^2\left(\Omega; L^2(0, T; \mathbb{H}^{2-\frac{d}{2}-\vartheta})\right)$, we set $\widehat{u}^{\epsilon}(t):=u^{\epsilon}(t)-Z^{\epsilon}(t)$. Then $\widehat{u}^{\epsilon}(t)$ satisfies the following random PDE
\begin{align*}
\frac{d}{dt}\widehat{u}^{\epsilon}(t)&=-\epsilon\Delta^2\widehat{u}^{\epsilon}(t)+\frac{1}{\epsilon}\Delta f(\widehat{u}^{\epsilon}(t)+Z^{\epsilon}(t))\quad t\in (0, T],\\
\widehat{u}^{\epsilon}(0)&=u^{\epsilon}_0.
\end{align*}
Testing the above equation with $(-\Delta)^{-1}\widehat{u}^{\epsilon}(t)$ yields
\begin{align*}
\frac{1}{2}\frac{d}{dt}\Vert \widehat{u}^{\epsilon}(t)\Vert^2_{\mathbb{H}^{-1}}+\epsilon\Vert\nabla \widehat{u}^{\epsilon}(t)\Vert^2+\frac{1}{\epsilon}\left(f(\widehat{u}^{\epsilon}(t)+Z^{\epsilon}(t)), \widehat{u}^{\epsilon}(t)\right)=0.
\end{align*}
Using the  fact that $(f(v), v)\geq \frac{1}{2}\Vert v\Vert^4_{\mathbb{L}^4}-C$, $v\in \mathbb{L}^4$, it follows that
\begin{align}
\label{DaPratoZabczyk1}
&\frac{1}{2}\frac{d}{dt}\Vert \widehat{u}^{\epsilon}(t)\Vert^2_{\mathbb{H}^{-1}}+\epsilon\Vert\nabla \widehat{u}^{\epsilon}(t)\Vert^2+\frac{1}{2\epsilon}\Vert \widehat{u}^{\epsilon}(t)+Z^{\epsilon}(t)\Vert^4_{\mathbb{L}^4}\nonumber\\
&\leq \frac{C}{\epsilon}+\frac{1}{\epsilon}\left\vert \left(f(\widehat{u}^{\epsilon}(t)+Z^{\epsilon}(t)), Z^{\epsilon}(t)\right)\right\vert.
\end{align}
Noting that $\vert f(x)\vert\leq 2\vert x\vert^3+C_1$, using H\"{o}lder and Young's inequalities and the embbeding $\mathbb{L}^4\hookrightarrow \mathbb{L}^1$, we deduce that
\begin{align}
\label{pra1}
&\left\vert \left(f(\widehat{u}^{\epsilon}(t)+Z^{\epsilon}(t)), Z^{\epsilon}(t)\right)\right\vert\nonumber\\
&\leq2\int_{\mathcal{D}}\vert\widehat{u}^{\epsilon}(t)+Z^{\epsilon}(t)\vert^3\vert Z^{\epsilon}(t)\vert dx+C_1\int_{\mathcal{D}}\vert Z^{\epsilon}(t)\vert dx\nonumber\\
&\leq 2\left(\int_{\mathcal{D}}\vert \widehat{u}^{\epsilon}(t)+Z^{\epsilon}(t)\vert^4 dx\right)^{\frac{3}{4}}\left(\int_{\mathcal{D}}\vert Z^{\epsilon}(t)\vert^4 dx\right)^{\frac{1}{4}}+C_1\int_{\mathcal{D}}\vert Z^{\epsilon}(t)\vert dx\\
&\leq \frac{1}{4}\int_{\mathcal{D}}\vert \widehat{u}^{\epsilon}(t)+Z^{\epsilon}(t)\vert^4dx+C\int_{\mathcal{D}}\vert Z^{\epsilon}(t)\vert^4dx+C_1\int_{\mathcal{D}}\vert Z^{\epsilon}(t)\vert dx\nonumber\\
&\leq \frac{1}{4}\Vert \widehat{u}^{\epsilon}(t)+Z^{\epsilon}(t)\Vert^4_{\mathbb{L}^4}+C\Vert Z^{\epsilon}(t)\Vert^4_{\mathbb{L}^4}+C.\nonumber
\end{align}
Substituting \eqref{pra1} into \eqref{DaPratoZabczyk1} and absorbing  $\frac{1}{4\epsilon}\Vert \widehat{u}^{\epsilon}(t)+Z^{\epsilon}(t)\Vert^4_{\mathbb{L}^4}$ into the left hand side, yields
\begin{align}
\label{pra1a}
\frac{1}{2}\frac{d}{dt}\Vert \widehat{u}^{\epsilon}(t)\Vert^2_{\mathbb{H}^{-1}}+\epsilon\Vert\nabla \widehat{u}^{\epsilon}(t)\Vert^2+\frac{1}{4\epsilon}\Vert \widehat{u}^{\epsilon}(t)+Z^{\epsilon}(t)\Vert^4_{\mathbb{L}^4}\leq \frac{C}{\epsilon}+\frac{C}{\epsilon}\Vert Z^{\epsilon}(t)\Vert^4_{\mathbb{L}^4}.
\end{align}
Integrating \eqref{pra1a} over $[0, t]$ and  taking the supremum over $[0, T]$ yields
\begin{align}
\label{depart1}
&\sup_{t\in[0, T]}\Vert\widehat{u}^{\epsilon}(t)\Vert^2_{\mathbb{H}^{-1}}+\epsilon\int_0^T\Vert\nabla\widehat{u}^{\epsilon}(s)\Vert^2ds+\frac{1}{4\epsilon}\int_0^T\Vert\widehat{u}^{\epsilon}(s)+Z^{\epsilon}(s)\Vert^4_{\mathbb{L}^4}ds\nonumber\\
&\leq  \Vert \widehat{u}^{\epsilon}(0)\Vert^2_{\mathbb{H}^{-1}}  +\frac{C}{\epsilon}+\frac{C}{\epsilon}\int_0^T\Vert Z^{\epsilon}(s)\Vert^4_{\mathbb{L}^4}ds.
\end{align}  
Taking the expectation on both sides of \eqref{depart1} and  using \lemref{convollemma} yields
\begin{align}
\label{depart1a}
&\mathbb{E}\left[\sup_{t\in[0, T]}\Vert \widehat{u}^{\epsilon}(t)\Vert^2_{\mathbb{H}^{-1}}\right]+\epsilon\int_0^T\mathbb{E}\left[\Vert\nabla \widehat{u}^{\epsilon}(s)\Vert^2\right]ds+\frac{1}{4\epsilon}\int_0^T\mathbb{E}\left[\Vert \widehat{u}^{\epsilon}(s)+Z^{\epsilon}(s)\Vert^4_{\mathbb{L}^4}\right]ds\nonumber\\
&\leq \Vert u^{\epsilon}_0\Vert^2_{\mathbb{H}^{-1}}+\frac{CT}{\epsilon}+\frac{C}{\epsilon}\int_0^T\mathbb{E}\left[\Vert Z^{\epsilon}(s)\Vert^4_{\mathbb{L}^4}\right]ds\leq C(1+\epsilon^{-1}+\epsilon^{4\sigma-3}). 
\end{align}
The proof of the fact that $u^{\epsilon}\in L^4\left(\Omega; L^4(0, T; \mathbb{L}^4)\right)$ then follows from \eqref{depart1a} by using triangle inequality and \lemref{convollemma}. 

 Using the fact  that $(-\Delta)^{-\alpha}$ is bounded in $\mathbb{L}^2$ for $\alpha>0$, the equivalence of norms $\Vert.\Vert_s$ and the usual norm of $\mathbb{H}^s$ for $s>0$ (see \secref{sec_notation}) and Poincar\'{e}'s inequality, it follows that
\begin{align}
\label{depart1b}
\Vert (-\Delta)^{1-\frac{d}{4}-\frac{\vartheta}{2}}\widehat{u}(s)\Vert&\leq \Vert (-\Delta)^{\frac{1}{2}-\frac{d}{4}-\frac{\vartheta}{2}}\Vert_{\mathcal{L}(\mathbb{L}^2)}\Vert (-\Delta)^{\frac{1}{2}}\widehat{u}(s)\Vert\nonumber\\
&\leq C\Vert (-\Delta)^{\frac{1}{2}}\widehat{u}(s)\Vert\leq C\Vert\nabla\widehat{u}(s)\Vert,
\end{align}
where we used   that $\frac{1}{2}-\frac{d}{4}-\frac{\vartheta}{2}<0$ (since $d=2,3$ and $\vartheta>0$).

The proof of the fact that $u^{\epsilon}\in L^2\left(\Omega; L^2(0, T; \mathbb{H}^{2-\frac{d}{2}-\vartheta})\right)$ follows from \eqref{depart1a} and \eqref{depart1b} by using triangle inequality and \lemref{bruitlemma1}.

 To prove \eqref{Regularesti}, we start from \eqref{depart1}. Omitting the terms involving the norms $\Vert\nabla .\Vert$ and $\Vert.\Vert_{\mathbb{L}^4}$ on the left hand side, raising the resulting inequality to power $\frac{p}{2}$, taking 
the expectation on both sides, using H\"{o}lder's inequality, the embedding $\mathbb{L}^r\hookrightarrow \mathbb{L}^s$, $s\leq r$, and \lemref{convollemma} yields
\begin{align*}
\mathbb{E}\left[\Vert \widehat{u}^{\epsilon}\Vert^p_{\mathbb{L}^{\infty}(0, T; \mathbb{H}^{-1})}\right]&\leq \Vert u^{\epsilon}_0\Vert^2_{\mathbb{H}^{-1}}+ C\epsilon^{-\frac{p}{2}}+\frac{C}{\epsilon^{\frac{p}{2}}} \mathbb{E}\left(\int_0^T\Vert Z^{\epsilon}(s)\Vert^4_{\mathbb{L}^4}ds\right)^{\frac{p}{2}}\nonumber\\
&\leq  \Vert u^{\epsilon}_0\Vert^2_{\mathbb{H}^{-1}}+ C\epsilon^{-\frac{p}{2}}+\frac{C}{\epsilon^{\frac{p}{2}}}\int_0^T\mathbb{E}\Vert Z^{\epsilon}(s)\Vert^{2p}_{\mathbb{L}^4}ds\nonumber\\
&\leq  \Vert u^{\epsilon}_0\Vert^2_{\mathbb{H}^{-1}}+ C\epsilon^{-\frac{p}{2}}+\frac{C}{\epsilon^{\frac{p}{2}}}\int_0^T\mathbb{E}\Vert Z^{\epsilon}(s)\Vert^{2p}_{\mathbb{L}^{2p}}ds\nonumber\\
&\leq  \Vert u^{\epsilon}_0\Vert^2_{\mathbb{H}^{-1}}+ C\epsilon^{-\frac{p}{2}}+C\epsilon^{\left(2\sigma-\frac{3}{2}\right)p}. 
\end{align*}
Repeating the argument above (i.e., dropping appropriate terms on the left hand-side in \eqref{depart1}, raising the resulting inequality to power $\frac{p}{2}$, using H\"{o}lder's inequality, \eqref{depart1b} and \lemref{convollemma}), we arrive at
\begin{align*}
\frac{1}{\epsilon^{\frac{p}{2}}}\mathbb{E}\left[\Vert\widehat{u}^{\epsilon}+Z^{\epsilon}\Vert^{2p}_{L^4(0, T; \mathbb{L}^4)}+ \epsilon^{\frac{p}{2}}\Vert (-\Delta)^{1-\frac{d}{4}-\frac{\vartheta}{2}}\widehat{u}^{\epsilon}\Vert^p_{L^2(0, T, \mathbb{L}^2)}\right]\leq  \Vert u^{\epsilon}_0\Vert^2_{\mathbb{H}^{-1}}+ C\epsilon^{-\frac{p}{2}}+C\epsilon^{\left(2\sigma-\frac{3}{2}\right)p}. 
\end{align*}
Summing the  two preceding estimates, using Lemma \ref{convollemma} and \ref{bruitlemma1} completes the proof of \eqref{Regularesti}. 
%The proof of \thmref{regularitytheorem} is thus completed. 
\end{proof}

%%%%%%%%%%%%%%%%%%%%%%%%%%%%%%%%%%%%%%%%%%%%%%%%%%%%%%%%%%%%%%%%%%%%%%%%%%%%%%%%%%%%%%%%%%%%%%%%%%%%%%%%%%%%%%%%
%%%%%%%%%%%%%%%%%%%%%%%%%%%%%%%%%%%%%%%%%%%%%%%%%%%%%%%%%%%%%%%%%%%%%%%%%%%%%%%%%%%%%%%%%%%%%%%%%%%%%%%%%%%%%%%%
%%%%%%%%%%%%%%%%%%%%%%%%%%%%%%%%%%%%%%%%%%%%%%%%%%%%%%%%%%%%%%%%%%%%%%%%%%%%%%%%%%%%%%%%%%%%%%%%%%%%%%%%%%%%%%%%
%%%%%%%%%%%%%%%%%%%%%%%%%%%%%%%%%%%%%%%%%%%%%%%%%%%%%%%%%%%%%%%%%%%%%%%%%%%%%%%%%%%%%%%%%%%%%%%%%%%%%%%%%%%%%%%%
%%%%%%%%%%%%%%%%%%%%%%%%%%%%%%%%%%%%%%%%%%%%%%%%%%%%%%%%%%%%%%%%%%%%%%%%%%%%%%%%%%%%%%%%%%%%%%%%%%%%%%%%%%%%%%%%
%%%%%%%%%%%%%%%%%%%%%%%%%%%%%%%%%%%%%%%%%%%%%%%%%%%%%%%%%%%%%%%%%%%%%%%%%%%%%%%%%%%%%%%%%%%%%%%%%%%%%%%%%%%%%%%%

\section{Sharp interface limit of the stochastic problem}
\label{sec_stoch}

Recall that the solution $u^{\epsilon}_{\mathsf{A}}$ of \eqref{model5} is constructed in \cite{abc94}.
We set $R^{\epsilon}:=u^{\epsilon}-u^{\epsilon}_{\mathsf{A}}$. From \eqref{sch}
and \eqref{model5}, it follows that  $R^{\epsilon}$ satisfies the stochastic PDE (SPDE)
\begin{align}
\begin{array}{ll}
dR^{\epsilon}=-\epsilon\Delta^2R^{\epsilon}dt+\frac{1}{\epsilon}\Delta\left(f(u^{\epsilon}_{\mathsf{A}}+R^{\epsilon})-f(u^{\epsilon}_{\mathsf{A}})\right)dt -\Delta r^{\epsilon}_{\mathsf{A}}dt+\epsilon^{\sigma}dW &\text{in}\; \mathcal{D}_T,
\\
{\partial_{\vec{n}} R^{\epsilon}}={\partial_{\vec{n}} \Delta R^{\epsilon}}=0 &\text{on}\; \partial\mathcal{D},\\
R^{\epsilon}(0)=0 &\text{in}\; \mathcal{D}.
\end{array}
\end{align}
We set $Y^{\epsilon}:=R^{\epsilon}-Z^{\epsilon}$. Note that \eqref{Convolution} implies that $dZ^{\epsilon}=-\epsilon\Delta^2Z^{\epsilon}+\epsilon^{\sigma}dW$.  
Hence, we deduce that $Y^{\epsilon}$ satisfies $\mathbb{P}$-a.s. the following random PDE
\begin{align}
\label{Yepsilon1}
\begin{array}{ll}
\frac{d}{dt} Y^{\epsilon}=-\epsilon\Delta^2Y^{\epsilon}+\frac{1}{\epsilon}\Delta\left(f(u^{\epsilon}_{\mathsf{A}}+Y^{\epsilon}+Z^{\epsilon})-f(u^{\epsilon}_{\mathsf{A}})\right)  -\Delta r^{\epsilon}_{\mathsf{A}} &\text{in}\; \mathcal{D}_T,\\
{\partial_{\vec{n}} Y^{\epsilon}}={\partial_{\vec{n}} \Delta Y^{\epsilon}}=0& \text{on}\; \partial\mathcal{D},\\
Y^{\epsilon}(0)=0 &\text{in}\; \mathcal{D}.
\end{array}
\end{align}

In the next lemma we derive an estimate for the solution of the RPDE \eqref{Yepsilon1}.
\begin{lemma}
\label{Formulalemma}
The following estimate holds $\mathbb{P}$-a.s. for the solution of (\ref{Yepsilon1})
\begin{align*}
&\Vert Y^{\epsilon}(t)\Vert^2_{\mathbb{H}^{-1}}+\epsilon^{ 4}\int_0^t\Vert \nabla Y^{\epsilon}(s)\Vert^2ds+\frac{13}{8\epsilon}\int_0^t\Vert Y^{\epsilon}(s)\Vert^4_{\mathbb{L}^4}ds\nonumber\\
&\leq\frac{C}{\epsilon}\int_0^t\Vert Y^{\epsilon}(s)\Vert^3_{\mathbb{L}^3}ds+\frac{C}{\epsilon}\int_0^t\Vert Z^{\epsilon}(s)\Vert^{\frac{4}{3}}_{\mathbb{L}^{\frac{4}{3}}}ds+\frac{C}{\epsilon}\int_0^t\Vert Z^{\epsilon}(s)\Vert^2ds\nonumber\\
&\quad+\frac{C}{\epsilon}\int_0^t\Vert Z^{\epsilon}(s)\Vert^{\frac{8}{3}}_{\mathbb{L}^{\frac{8}{3}}}ds+\frac{C}{\epsilon}\int_0^t\Vert Z^{\epsilon}(s)\Vert^4_{\mathbb{L}^4}ds+ C\epsilon^{\frac{1}{2}}\int_0^t\Vert r^{\epsilon}_{\mathsf{A}}(s)\Vert^{\frac{3}{2}}_{C(\mathcal{D})} ds\quad  \text{ for }  t\in[0, T].
\end{align*}
\end{lemma}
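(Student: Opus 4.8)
The plan is to test the random PDE \eqref{Yepsilon1} with $(-\Delta)^{-1}Y^{\epsilon}$, thereby working in the $\mathbb{H}^{-1}$ metric in which the Cahn--Hilliard dynamics is a gradient flow, and then to estimate the nonlinearity by an \emph{exact} polynomial expansion rather than by the generic bound \eqref{xxx1}. First I would note that, since $Y^{\epsilon}(0)=0$ and every term on the right-hand side of \eqref{Yepsilon1} is of the form $\Delta(\cdot)$ with Neumann boundary conditions, the spatial mean of $Y^{\epsilon}$ is conserved; hence $m(Y^{\epsilon}(t))=0$ and $(-\Delta)^{-1}Y^{\epsilon}$ is well defined. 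Pairing \eqref{Yepsilon1} with $(-\Delta)^{-1}Y^{\epsilon}$, using self-adjointness together with the identities $((-\Delta)^{2}Y^{\epsilon},(-\Delta)^{-1}Y^{\epsilon})=\Vert\nabla Y^{\epsilon}\Vert^{2}$ and $(\Delta g,(-\Delta)^{-1}Y^{\epsilon})=-(g,Y^{\epsilon})$, yields the $\mathbb{P}$-a.s. pointwise-in-time identity
\begin{equation*}
\frac{1}{2}\frac{d}{dt}\Vert Y^{\epsilon}\Vert^{2}_{\mathbb{H}^{-1}}+\epsilon\Vert\nabla Y^{\epsilon}\Vert^{2}+\frac{1}{\epsilon}\big(f(u^{\epsilon}_{\mathsf{A}}+Y^{\epsilon}+Z^{\epsilon})-f(u^{\epsilon}_{\mathsf{A}}),Y^{\epsilon}\big)=(r^{\epsilon}_{\mathsf{A}},Y^{\epsilon}).
\end{equation*}

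Next I would expand the nonlinearity explicitly. Since $f(u)=u^{3}-u$, setting $w=Y^{\epsilon}+Z^{\epsilon}$ gives the algebraic identity
\begin{equation*}
f(u^{\epsilon}_{\mathsf{A}}+w)-f(u^{\epsilon}_{\mathsf{A}})=f'(u^{\epsilon}_{\mathsf{A}})\,w+3u^{\epsilon}_{\mathsf{A}}\,w^{2}+w^{3},
\end{equation*}
and substituting $w=Y^{\epsilon}+Z^{\epsilon}$ and pairing with $Y^{\epsilon}$ produces the coercive quartic $\tfrac{1}{\epsilon}\Vert Y^{\epsilon}\Vert^{4}_{\mathbb{L}^{4}}$, the cubic $\tfrac{3}{\epsilon}\int_{\mathcal{D}}u^{\epsilon}_{\mathsf{A}}(Y^{\epsilon})^{3}$, the quadratic $\tfrac{1}{\epsilon}\int_{\mathcal{D}}f'(u^{\epsilon}_{\mathsf{A}})(Y^{\epsilon})^{2}$, and six mixed terms $\tfrac{c}{\epsilon}\int_{\mathcal{D}}(Y^{\epsilon})^{a}(Z^{\epsilon})^{b}$ with $(a,b)\in\{(1,1),(2,1),(1,2),(3,1),(2,2),(1,3)\}$. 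Using $|u^{\epsilon}_{\mathsf{A}}|,|f'(u^{\epsilon}_{\mathsf{A}})|\le C$ from \eqref{contconv}, the cubic is bounded by $\tfrac{C}{\epsilon}\Vert Y^{\epsilon}\Vert^{3}_{\mathbb{L}^{3}}$, which I would deliberately keep on the right-hand side, since it is the term removed later by the new interpolation inequality \lemref{Fundamentallemma} and the spectral estimate \propref{Spectralestimate}. Each mixed term is treated by H\"older and Young's inequalities, absorbing the factor $(Y^{\epsilon})^{a}$ into the quartic via the conjugate exponent $\tfrac{4}{4-a}$; this produces exactly the norms $\Vert Z^{\epsilon}\Vert^{4/3}_{\mathbb{L}^{4/3}}$ (from $a=b=1$), $\Vert Z^{\epsilon}\Vert^{2}$ (from $a=2,b=1$), $\Vert Z^{\epsilon}\Vert^{8/3}_{\mathbb{L}^{8/3}}$ (from $a=1,b=2$), and $\Vert Z^{\epsilon}\Vert^{4}_{\mathbb{L}^{4}}$ (from the terms with $a=3$ or $b\ge2$), matching the claimed right-hand side. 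Choosing the Young parameters small so that only a controlled amount of quartic mass is absorbed leaves the coefficient $\tfrac{13}{8\epsilon}$ in front of $\Vert Y^{\epsilon}\Vert^{4}_{\mathbb{L}^{4}}$ after a final multiplication by $2$. The residual is handled by $(r^{\epsilon}_{\mathsf{A}},Y^{\epsilon})\le\Vert r^{\epsilon}_{\mathsf{A}}\Vert_{C(\mathcal{D})}\Vert Y^{\epsilon}\Vert_{\mathbb{L}^{1}}$ followed by a Young inequality with exponents $(3,\tfrac32)$ and weight $\epsilon^{-1/3}$, giving $\tfrac{C}{\epsilon}\Vert Y^{\epsilon}\Vert^{3}_{\mathbb{L}^{3}}+C\epsilon^{1/2}\Vert r^{\epsilon}_{\mathsf{A}}\Vert^{3/2}_{C(\mathcal{D})}$.

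The hard part will be the quadratic term $\tfrac{1}{\epsilon}\int_{\mathcal{D}}f'(u^{\epsilon}_{\mathsf{A}})(Y^{\epsilon})^{2}$. Because $f'(u^{\epsilon}_{\mathsf{A}})=3(u^{\epsilon}_{\mathsf{A}})^{2}-1$ is negative in the interfacial region, this term is not sign-definite, and the crude bound $\tfrac{C}{\epsilon}\Vert Y^{\epsilon}\Vert^{2}$ is both singular in $\epsilon$ and not dominated by the admissible right-hand side, which carries no $\mathbb{L}^{2}$ mass. The resolution I would use is to \emph{not} separate this term from the dissipation: one keeps $\epsilon\Vert\nabla Y^{\epsilon}\Vert^{2}+\tfrac{1}{\epsilon}\int_{\mathcal{D}}f'(u^{\epsilon}_{\mathsf{A}})(Y^{\epsilon})^{2}$ together and exploits the spectral lower bound of \propref{Spectralestimate}, whose constant $C_{0}$ is independent of $\epsilon$; this is precisely what renders the estimate robust in $\epsilon$ and removes what would otherwise be an $\epsilon^{-1}$ constant. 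Only a small fraction of the dissipation is consumed in this balancing, which is why merely $\epsilon^{4}\Vert\nabla Y^{\epsilon}\Vert^{2}$ survives on the left. I expect the two genuinely delicate points to be (i) carrying out this balancing so that no $\epsilon$-singular constant and no unabsorbed gradient remain, and (ii) verifying that every $Z^{\epsilon}$-norm appearing is one for which \lemref{convollemma} (and, for the later fractional-Sobolev estimate, \lemref{bruitlemma1}) supplies moments of the correct order in $\epsilon$. Finally I would integrate over $[0,t]$, use $Y^{\epsilon}(0)=0$, and collect terms to obtain the asserted inequality.
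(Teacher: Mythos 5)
Your plan follows the paper's proof in all of its main lines: you test with $(-\Delta)^{-1}Y^{\epsilon}$, expand the cubic nonlinearity exactly (your direct expansion in $w=Y^{\epsilon}+Z^{\epsilon}$ produces precisely the same collection of terms as the paper's two-step splitting through the intermediate point $u^{\epsilon}_{\mathsf{A}}+Z^{\epsilon}$), absorb each mixed term into the quartic by H\"older and Young exactly as the paper does (your exponents reproduce the norms $\Vert Z^{\epsilon}\Vert^{4/3}_{\mathbb{L}^{4/3}}$, $\Vert Z^{\epsilon}\Vert$, $\Vert Z^{\epsilon}\Vert^{8/3}_{\mathbb{L}^{8/3}}$, $\Vert Z^{\epsilon}\Vert^{4}_{\mathbb{L}^4}$ of the statement), and treat the residual term identically. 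However, at the crux --- the quadratic term --- you flag the ``balancing'' as a delicate point but do not supply the idea that makes it work, and the naive version of your plan (keep $\epsilon\Vert\nabla Y^{\epsilon}\Vert^{2}+\frac{1}{\epsilon}\int_{\mathcal{D}}f'(u^{\epsilon}_{\mathsf{A}})(Y^{\epsilon})^{2}$ together and apply \propref{Spectralestimate} to it) fails: the spectral estimate consumes the \emph{entire} gradient term, so nothing of the form $\epsilon^{4}\Vert\nabla Y^{\epsilon}\Vert^{2}$ could survive on the left. The paper's resolution is a convex-combination trick: it derives \emph{two} inequalities --- one applying \propref{Spectralestimate} (gradient lost, $C_0\Vert Y^{\epsilon}\Vert^2_{\mathbb{H}^{-1}}$ gained on the right), and one using only the pointwise bound $f'\geq -1$ (gradient kept, at the price of $\frac{1}{\epsilon}\Vert Y^{\epsilon}\Vert^2$ on the right) --- multiplies them by $1-\epsilon^{3}$ and $\epsilon^{3}$ respectively, and adds them; the gradient then survives with weight exactly $\epsilon^{3}\cdot\epsilon=\epsilon^{4}$, and the leftover $\epsilon^{2}\Vert Y^{\epsilon}\Vert^{2}$ is absorbed via the interpolation $\Vert v\Vert^{2}\leq\Vert v\Vert_{\mathbb{H}^{-1}}\Vert\nabla v\Vert$ and Young's inequality into $\frac{1}{2}\Vert Y^{\epsilon}\Vert^{2}_{\mathbb{H}^{-1}}+\frac{\epsilon^{4}}{2}\Vert\nabla Y^{\epsilon}\Vert^{2}$. (Incidentally, your phrase ``only a small fraction of the dissipation is consumed'' is backwards: the fraction $1-\epsilon^{3}$ is consumed by the spectral estimate, and only the $\epsilon^{3}$ fraction survives.)

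A second, related omission: the spectral estimate unavoidably places $C\Vert Y^{\epsilon}\Vert^{2}_{\mathbb{H}^{-1}}$ on the right-hand side, which after integration in time becomes $C\int_{0}^{t}\Vert Y^{\epsilon}(s)\Vert^{2}_{\mathbb{H}^{-1}}ds$ --- a term absent from the statement of \lemref{Formulalemma}. Your final step ``integrate and collect terms'' does not remove it; you need Gronwall's lemma, and the whole point of routing the singular quadratic term through \propref{Spectralestimate} is that the resulting Gronwall rate is $\epsilon$-independent, so the multiplicative factor $e^{CT}$ is harmless. With these two steps supplied, your outline becomes the paper's proof.
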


\begin{proof}
We fix $\omega \in \Omega$ and consider $Y^{\epsilon} \equiv Y^{\epsilon}(\omega)$.
Testing \eqref{Yepsilon1} with $(-\Delta)^{-1}Y^{\epsilon}$ yields
\begin{align}
\label{formula1}
\frac{1}{2}\frac{d}{dt}\Vert Y^{\epsilon}\Vert^2_{\mathbb{H}^{-1}}+\epsilon\Vert \nabla Y^{\epsilon}(t)\Vert^2+\frac{1}{\epsilon}\left( f(u^{\epsilon}_{\mathsf{A}}+Y^{\epsilon}+Z^{\epsilon})-f(u^{\epsilon}_{\mathsf{A}}), Y^{\epsilon}\right)  -\left( r^{\epsilon}_{\mathsf{A}}, Y^{\epsilon}\right)=0.
\end{align}
Recall that $f(s)=s^3-s$.  A straightforward computation yields
\begin{align}
\label{identity1}
f(a)-f(b)=(a-b)f'(a)+(a-b)^3-3(a-b)^2a,\quad a,b\in\mathbb{R}.
\end{align}
Using \eqref{identity1} we obtain
\begin{align*}
&\left( f(u^{\epsilon}_{\mathsf{A}}+Y^{\epsilon}+Z^{\epsilon})-f(u^{\epsilon}_{\mathsf{A}}+Z^{\epsilon}), Y^{\epsilon}\right)\nonumber\\
&=-\left( f(u^{\epsilon}_{\mathsf{A}}+Z^{\epsilon})-f(u^{\epsilon}_{\mathsf{A}}+Y^{\epsilon}+Z^{\epsilon}), Y^{\epsilon}\right)\nonumber\\
&=\left( f'(u^{\epsilon}_{\mathsf{A}}+Z^{\epsilon})Y^{\epsilon}, Y^{\epsilon}\right)+\Vert Y^{\epsilon}\Vert^4_{\mathbb{L}^4}+3\left( (Y^{\epsilon})^3, u^{\epsilon}_{\mathsf{A}}+Z^{\epsilon}\right)\nonumber\\
&=\left( f'(u^{\epsilon}_{\mathsf{A}})Y^{\epsilon}, Y^{\epsilon}\right)+\left( \left(f'(u^{\epsilon}_{\mathsf{A}}+Z^{\epsilon})-f'(u^{\epsilon}_{\mathsf{A}})\right)Y^{\epsilon}, Y^{\epsilon}\right)+\Vert Y^{\epsilon}\Vert^4_{\mathbb{L}^4}+3\left( (Y^{\epsilon})^3, u^{\epsilon}_{\mathsf{A}}+Z^{\epsilon}\right).
\end{align*}
Using the preceding identity we rewrite
\begin{align*}
&\left( f(u^{\epsilon}_{\mathsf{A}}+Y^{\epsilon}+Z^{\epsilon})-f(u^{\epsilon}_{\mathsf{A}}), Y^{\epsilon}\right)\nonumber\\
&=\left(f(u^{\epsilon}_{\mathsf{A}}+Y^{\epsilon}+Z^{\epsilon})-f(u^{\epsilon}_{\mathsf{A}}+Z^{\epsilon}), Y^{\epsilon}\right)+\left( f(u^{\epsilon}_{\mathsf{A}}+Z^{\epsilon})-f(u^{\epsilon}_{\mathsf{A}}), Y^{\epsilon}\right)\nonumber\\
&=\left( f'(u^{\epsilon}_{\mathsf{A}})Y^{\epsilon}, Y^{\epsilon}\right)+\left(\left(f'(u^{\epsilon}_{\mathsf{A}}+Z^{\epsilon})-f'(u^{\epsilon}_{\mathsf{A}})\right)Y^{\epsilon}, Y^{\epsilon}\right)+\Vert Y^{\epsilon}\Vert^4_{\mathbb{L}^4}\nonumber\\
&\quad+3\left( (Y^{\epsilon})^3, u^{\epsilon}_{\mathsf{A}}+Z^{\epsilon}\right)+\left( f(u^{\epsilon}_{\mathsf{A}}+Z^{\epsilon})-f(u^{\epsilon}_{\mathsf{A}}), Y^{\epsilon}\right).
\end{align*}
Substituting the identity above into \eqref{formula1} leads to
\begin{align}
\label{formula2}
&\frac{1}{2}\frac{d}{dt}\Vert Y^{\epsilon}(t)\Vert^2_{\mathbb{H}^{-1}}+\epsilon\Vert \nabla Y^{\epsilon}(t)\Vert^2+\frac{1}{\epsilon}\left( f'(u^{\epsilon}_{\mathsf{A}})Y^{\epsilon}, Y^{\epsilon}\right)+\frac{1}{\epsilon}\Vert Y^{\epsilon}\Vert^4_{\mathbb{L}^4}\nonumber\\
&\leq \frac{1}{\epsilon}\left\vert \left( \left(f'(u^{\epsilon}_{\mathsf{A}}+Z^{\epsilon})-f'(u^{\epsilon}_{\mathsf{A}})\right)Y^{\epsilon}, Y^{\epsilon}\right)\right\vert+\frac{3}{\epsilon}\left\vert \left( (Y^{\epsilon})^3, u^{\epsilon}_{\mathsf{A}}+Z^{\epsilon}\right)\right\vert\\
&\quad+\frac{1}{\epsilon}\left\vert \left( f(u^{\epsilon}_{\mathsf{A}}+Z^{\epsilon})-f(u^{\epsilon}_{\mathsf{A}}), Y^{\epsilon}\right)\right\vert+\vert (r^{\epsilon}_{\mathsf{A}}, Y^{\epsilon})\vert\nonumber\\
&=:I+II+III+IV\nonumber. 
\end{align}
Noting  $f'(a)-f'(b)= 3(a-b)(a+b)$, using the uniform boundedness of $u^{\epsilon}_{\mathsf{A}}$ (cf. \eqref{Alakiresult1}), H\"{o}lder's and Young's inequalities yields
\begin{align*}
I&=\frac{1}{\epsilon}\left\vert \left( \left(f'(u^{\epsilon}_{\mathsf{A}}+Z^{\epsilon})-f'(u^{\epsilon}_{\mathsf{A}})\right)Y^{\epsilon}, Y^{\epsilon}\right)\right\vert\leq \frac{C}{\epsilon}\int_{\mathcal{D}}\vert Z^{\epsilon}\vert(Y^{\epsilon})^2dx+\frac{C}{\epsilon}\int_{\mathcal{D}}(Z^{\epsilon})^2(Y^{\epsilon})^2dx\nonumber\\
&\leq \frac{C}{\epsilon}\Vert Y^{\epsilon}\Vert^2_{\mathbb{L}^4}\Vert Z^{\epsilon}\Vert+\frac{C}{\epsilon}\Vert Y^{\epsilon}\Vert^2_{\mathbb{L}^4}\Vert Z^{\epsilon}\Vert^2_{\mathbb{L}^4}\leq \frac{1}{16\epsilon}\Vert Y^{\epsilon}\Vert^4_{\mathbb{L}^4}+\frac{C}{\epsilon}\Vert Z^{\epsilon}\Vert^2+\frac{C}{\epsilon}\Vert Z^{\epsilon}\Vert^4_{\mathbb{L}^4}.
\end{align*} 
Using the uniform boundedness of $u^{\epsilon}_{\mathsf{A}}$  \eqref{Alakiresult1}, H\"{o}lder's and Young's inequalities leads to
\begin{align*}
II&=\frac{3}{\epsilon}\left\vert \left( (Y^{\epsilon})^3, u^{\epsilon}_{\mathsf{A}}+Z^{\epsilon}\right)\right\vert\leq \frac{C}{\epsilon}\Vert Y^{\epsilon}\Vert^3_{\mathbb{L}^3}+\frac{3}{\epsilon}\left\vert \left( (Y^{\epsilon})^3, Z^{\epsilon}\right)\right\vert\nonumber\\
&\leq  \frac{C}{\epsilon}\Vert Y^{\epsilon}\Vert^3_{\mathbb{L}^3}+\frac{3}{\epsilon}\Vert Y^{\epsilon}\Vert^3_{\mathbb{L}^4}\Vert Z^{\epsilon}\Vert_{\mathbb{L}^4}\leq \frac{C}{\epsilon}\Vert Y^{\epsilon}\Vert^3_{\mathbb{L}^3}+\frac{1}{16\epsilon}\Vert Y^{\epsilon}\Vert^4_{\mathbb{L}^4}+\frac{C}{\epsilon}\Vert Z^{\epsilon}\Vert^4_{\mathbb{L}^4}.
\end{align*}
Using \eqref{identity1},  \eqref{Alakiresult1}, H\"{o}lder's and Young's inequalities yields
\begin{align*}
III&=\frac{1}{\epsilon}\left\vert \left( f(u^{\epsilon}_{\mathsf{A}}+Z^{\epsilon})-f(u^{\epsilon}_{\mathsf{A}}), Y^{\epsilon}\right)\right\vert \nonumber\\
&\leq \frac{1}{\epsilon}\int_{\mathcal{D}}\vert Z^{\epsilon}\vert\vert f'(u^{\epsilon}_{\mathsf{A}})\vert\vert Y^{\epsilon}\vert +\frac{1}{\epsilon}\int_{\mathcal{D}}\vert Z^{\epsilon}\vert^3 \vert Y^{\epsilon}\vert +\frac{3}{\epsilon}\int_{\mathcal{D}}\vert Z^{\epsilon}\vert^2\vert u^{\epsilon}_{\mathsf{A}}\vert \vert Y^{\epsilon}\vert\nonumber\\
&\leq \frac{C}{\epsilon}\Vert Y^{\epsilon}\Vert_{\mathbb{L}^4}\Vert Z^{\epsilon}\Vert_{\mathbb{L}^{\frac{4}{3}}}+\Vert \frac{1}{\epsilon}\Vert Y^{\epsilon}\Vert_{\mathbb{L}^4}\Vert Z^{\epsilon}\Vert^{3}_{\mathbb{L}^4}+\frac{C}{\epsilon}\Vert Y^{\epsilon}\Vert_{\mathbb{L}^4}\Vert Z^{\epsilon}\Vert^2_{\mathbb{L}^{\frac{8}{3}}}\nonumber\\
&\leq \frac{1}{16\epsilon}\Vert Y^{\epsilon}\Vert^4_{\mathbb{L}^4}+\frac{C}{\epsilon}\Vert Z^{\epsilon}\Vert^{\frac{4}{3}}_{\mathbb{L}^{\frac{4}{3}}}+\frac{C}{\epsilon}\Vert Z^{\epsilon}\Vert^{\frac{8}{3}}_{\mathbb{L}^{\frac{8}{3}}}+\frac{C}{\epsilon}\Vert Z^{\epsilon}\Vert^4_{\mathbb{L}^4}. 
\end{align*}
Using  the embedding $\mathbb{L}^3 \hookrightarrow\mathbb{L}^1$ and Young's inequality, it follows that
\begin{align*}
IV=\vert\left( r^{\epsilon}_{\mathsf{A}}, Y^{\epsilon}(t)\right)\vert&\leq \Vert r^{\epsilon}_{\mathsf{A}}\Vert_{C(\mathcal{D})}\Vert Y^{\epsilon}(t)\Vert_{\mathbb{L}^1}\leq C\Vert r^{\epsilon}_{\mathsf{A}}\Vert_{C(\mathcal{D})}\Vert Y^{\epsilon}(t)\Vert_{\mathbb{L}^3}\nonumber\\
&\leq C\epsilon^{\frac{1}{2}}\Vert r^{\epsilon}_{\mathsf{A}}\Vert^{\frac{3}{2}}_{C(\mathcal{D})}+\frac{C}{\epsilon}\Vert Y^{\epsilon}(t)\Vert^3_{\mathbb{L}^3}.
\end{align*}
Substituting the above estimates of $I$, $II$, $III$ and $IV$ into \eqref{formula2} leads to
\begin{align}
\label{apriori1} 
&\frac{1}{2}\frac{d}{dt}\Vert Y^{\epsilon}\Vert^2_{\mathbb{H}^{-1}}+ \epsilon\Vert \nabla Y^{\epsilon}(t)\Vert^2+\frac{1}{\epsilon}\left( f'(u^{\epsilon}_{\mathsf{A}})Y^{\epsilon}, Y^{\epsilon}\right)+\frac{13}{16\epsilon}\Vert Y^{\epsilon}\Vert^4_{\mathbb{L}^4}\nonumber\\
&\leq \frac{C}{\epsilon}\Vert Y^{\epsilon}\Vert^3_{\mathbb{L}^3}+\frac{C}{\epsilon}\Vert Z^{\epsilon}\Vert^{\frac{4}{3}}_{\mathbb{L}^{\frac{4}{3}}}+\frac{C}{\epsilon}\Vert Z^{\epsilon}\Vert^2+\frac{C}{\epsilon}\Vert Z^{\epsilon}\Vert^{\frac{8}{3}}_{\mathbb{L}^{\frac{8}{3}}}+\frac{C}{\epsilon}\Vert Z^{\epsilon}\Vert^4_{\mathbb{L}^4}+ C\epsilon^{\frac{1}{2}}\Vert r^{\epsilon}_{\mathsf{A}}\Vert^{\frac{3}{2}}_{C(\mathcal{D})}.
\end{align}
Using \propref{Spectralestimate} we deduce from \eqref{apriori1} that
\begin{align}
\label{apriori1a}
\frac{1}{2}\frac{d}{dt}\Vert Y^{\epsilon}\Vert^2_{\mathbb{H}^{-1}}+\frac{13}{16\epsilon}\Vert Y^{\epsilon}\Vert^4_{\mathbb{L}^4}
\leq& \frac{C}{\epsilon}\left(\Vert Y^{\epsilon}\Vert^3_{\mathbb{L}^3}+\Vert Z^{\epsilon}\Vert^{\frac{4}{3}}_{\mathbb{L}^{\frac{4}{3}}}+\Vert Z^{\epsilon}\Vert^2+\Vert Z^{\epsilon}\Vert^{\frac{8}{3}}_{\mathbb{L}^{\frac{8}{3}}}+\Vert Z^{\epsilon}\Vert^4_{\mathbb{L}^4}\right)\nonumber\\
&+C\epsilon^{\frac{1}{2}}\Vert r^{\epsilon}_{\mathsf{A}}\Vert^{ \frac{3}{2}}_{C(\mathcal{D})}+C_0\Vert Y^{\epsilon}\Vert^2_{\mathbb{H}^{-1}}.
\end{align}
By noting that $f'(x)=3x^2-1$, it follows from \eqref{apriori1} that
\begin{align}
\label{apriori2} 
&\frac{1}{2}\frac{d}{dt}\Vert Y^{\epsilon}\Vert^2_{\mathbb{H}^{-1}}+ \epsilon\Vert \nabla Y^{\epsilon}(t)\Vert^2+\frac{13}{16\epsilon}\Vert Y^{\epsilon}\Vert^4_{\mathbb{L}^4}\nonumber\\
&\leq \frac{1}{\epsilon}\Vert Y^{\epsilon}\Vert^2+\frac{C}{\epsilon}\Vert Y^{\epsilon}\Vert^3_{\mathbb{L}^3}+\frac{C}{\epsilon}\Vert Z^{\epsilon}\Vert^{\frac{4}{3}}_{\mathbb{L}^{\frac{4}{3}}}+\frac{C}{\epsilon}\Vert Z^{\epsilon}\Vert^2+\frac{C}{\epsilon}\Vert Z^{\epsilon}\Vert^{\frac{8}{3}}_{\mathbb{L}^{\frac{8}{3}}}\nonumber\\
&\quad+\frac{C}{\epsilon}\Vert Z^{\epsilon}\Vert^4_{\mathbb{L}^4}+ C\epsilon^{\frac{1}{2}}\Vert r^{\epsilon}_{\mathsf{A}}\Vert^{\frac{3}{2}}_{C(\mathcal{D})}.
\end{align}
Multiplying \eqref{apriori2} by $\epsilon^3$ and \eqref{apriori1a} by $1-\epsilon^3$, summing up the resulting inequalities yields
\begin{align}
\label{apriori3} 
&\frac{1}{2}\frac{d}{dt}\Vert Y^{\epsilon}\Vert^2_{\mathbb{H}^{-1}}+ \epsilon^4\Vert \nabla Y^{\epsilon}(t)\Vert^2+\frac{13}{16\epsilon}\Vert Y^{\epsilon}\Vert^4_{\mathbb{L}^4}\nonumber\\
&\leq \epsilon^2\Vert Y^{\epsilon}\Vert^2+\frac{C}{\epsilon}\Vert Y^{\epsilon}\Vert^3_{\mathbb{L}^3}+\frac{C}{\epsilon}\Vert Z^{\epsilon}\Vert^{\frac{4}{3}}_{\mathbb{L}^{\frac{4}{3}}}+\frac{C}{\epsilon}\Vert Z^{\epsilon}\Vert^2_{\mathbb{L}^2}+\frac{C}{\epsilon}\Vert Z^{\epsilon}\Vert^{\frac{8}{3}}_{\mathbb{L}^{\frac{8}{3}}}\nonumber\\
&\quad+\frac{C}{\epsilon}\Vert Z^{\epsilon}\Vert^4_{\mathbb{L}^4}+C\epsilon^{\frac{1}{2}}\Vert r^{\epsilon}_{\mathsf{A}}\Vert^{\frac{3}{2}}_{C(\mathcal{D})}+ C_0\Vert Y^{\epsilon}\Vert^2_{\mathbb{H}^{-1}}.
\end{align}
Using the interpolation inequality $\Vert v\Vert^2\leq \Vert v\Vert_{\mathbb{H}^{-1}}\Vert \nabla v\Vert$, $v\in \mathbb{H}^1$, and Young's inequality, we estimate
\begin{align}
\label{apriori4}
\epsilon^2\Vert Y^{\epsilon}\Vert^2\leq \epsilon^2\Vert Y^{\epsilon}\Vert_{\mathbb{H}^{-1}}\Vert \nabla Y^{\epsilon}\Vert\leq \frac{1}{2}\Vert Y^{\epsilon}\Vert^2_{\mathbb{H}^{-1}}+\frac{\epsilon^4}{2}\Vert\nabla Y^{\epsilon}\Vert^2. 
\end{align}
Substituting \eqref{apriori4} into \eqref{apriori3} yields
\begin{align*}
%\label{apriori5} 
&\frac{1}{2}\frac{d}{dt}\Vert Y^{\epsilon}\Vert^2_{\mathbb{H}^{-1}}+ \frac{\epsilon^4}{2}\Vert \nabla Y^{\epsilon}(t)\Vert^2+\frac{13}{16\epsilon}\Vert Y^{\epsilon}\Vert^4_{\mathbb{L}^4}\nonumber\\
&\leq \frac{C}{\epsilon}\Vert Y^{\epsilon}\Vert^3_{\mathbb{L}^3}+\frac{C}{\epsilon}\Vert Z^{\epsilon}\Vert^{\frac{4}{3}}_{\mathbb{L}^{\frac{4}{3}}}+\frac{C}{\epsilon}\Vert Z^{\epsilon}\Vert^2_{\mathbb{L}^2}+\frac{C}{\epsilon}\Vert Z^{\epsilon}\Vert^{\frac{8}{3}}_{\mathbb{L}^{\frac{8}{3}}}+\frac{C}{\epsilon}\Vert Z^{\epsilon}\Vert^4_{\mathbb{L}^4}\\
&\quad+C\epsilon^{\frac{1}{2}}\Vert r^{\epsilon}_{\mathsf{A}}\Vert^{\frac{3}{2}}_{C(\mathcal{D})}+C\Vert Y^{\epsilon}\Vert^2_{\mathbb{H}^{-1}}.\nonumber
\end{align*}
Integrating the preceding inequality on $[0, t]$ and noting that $Y^{\epsilon}(0)=0$ leads to
\begin{align*}
&\Vert Y^{\epsilon}(t)\Vert^2_{\mathbb{H}^{-1}}+\epsilon^{4}\int_0^t\Vert \nabla Y^{\epsilon}(s)\Vert^2ds+\frac{13}{8\epsilon}\int_0^t\Vert Y^{\epsilon}(s)\Vert^4_{\mathbb{L}^4}ds\nonumber\\
&\leq\frac{C}{\epsilon}\left(\int_0^t\Vert Y^{\epsilon}(s)\Vert^3_{\mathbb{L}^3}ds+\int_0^t\Vert Z^{\epsilon}(s)\Vert^{\frac{4}{3}}_{\mathbb{L}^{\frac{4}{3}}}ds+\int_0^t\Vert Z^{\epsilon}(s)\Vert^2_{\mathbb{L}^2}ds+\int_0^t\Vert Z^{\epsilon}(s)\Vert^{\frac{8}{3}}_{\mathbb{L}^{\frac{8}{3}}}ds\right)\nonumber\\
&\quad+\frac{C}{\epsilon}\int_0^t\Vert Z^{\epsilon}(s)\Vert^4_{\mathbb{L}^4}ds+C_0\int_0^t\Vert Y^{\epsilon}(s)\Vert^2_{\mathbb{H}^{-1}}ds+C\epsilon^{\frac{1}{2}}\int_0^t\Vert r^{\epsilon}_{\mathsf{A}}(s)\Vert^{ \frac{3}{2}}_{C(\mathcal{D})} ds.
\end{align*}
The result follows after an application of Gronwall's lemma. 
\end{proof}

We introduce the space $\Omega_{\delta, \eta,\epsilon}\subset \Omega$, s.t. 
\begin{align}
\label{defOmega1}
\Omega_{\delta, \eta, \epsilon}=\left\{\omega \in \Omega:\, \Vert Z^{\epsilon}\Vert_{C(\mathcal{D}_T)}\leq C_1\epsilon^{\sigma^*-2\delta-2\eta}\right\},
\end{align}
 with $\sigma^*:=\sigma-\frac{1}{4}$. We also introduce the space $\widetilde{\Omega}_{\vartheta,\kappa,\epsilon}\subset\Omega$, s.t.,
 \begin{align*}
%\label{defOmega2}
\widetilde{\Omega}_{\vartheta,\kappa, \epsilon}=\left\{\omega\in\Omega:\, \sup_{t\in [0, T]}\Vert (-\Delta)^{1-\frac{d}{4}-\frac{\vartheta}{2}}Z^{\epsilon}(t)\Vert^2\leq C_1\epsilon^{2\gamma-\vartheta-\kappa-1}\right\}. 
\end{align*}

\begin{lemma}
\label{LuboLemma1}
For any $C_1, \delta>0$, $\eta\geq 0$, there exists a constant $C_{\delta, \eta}=C(\sigma, \eta, C_1)>0$, such that $\mathbb{P}[\Omega_{\delta,\eta, \epsilon}]>1-C_{\delta, \eta}\epsilon^{\delta+\eta}$. 
\end{lemma}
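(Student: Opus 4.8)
The plan is to bound the probability of the complementary event by Markov's inequality applied to a high moment of the space-time supremum of $Z^{\epsilon}$, thereby reducing the statement to a moment estimate for $\Vert Z^{\epsilon}\Vert_{C(\mathcal{D}_T)}$. Concretely, for any $p\geq 1$ I would write
\[
\mathbb{P}\left[\Omega\setminus\Omega_{\delta,\eta,\epsilon}\right]
=\mathbb{P}\left[\Vert Z^{\epsilon}\Vert_{C(\mathcal{D}_T)}>C_1\epsilon^{\sigma^{*}-2\delta-2\eta}\right]
\leq \frac{\mathbb{E}\left[\Vert Z^{\epsilon}\Vert^{p}_{C(\mathcal{D}_T)}\right]}{C_1^{p}\,\epsilon^{(\sigma^{*}-2\delta-2\eta)p}}.
\]
Thus everything rests on a bound of the form $\mathbb{E}[\Vert Z^{\epsilon}\Vert^{p}_{C(\mathcal{D}_T)}]\leq C_p\,\epsilon^{\beta p}$ with $\beta$ as close as possible to $\sigma^{*}$. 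Granting this, the right-hand side is at most $C_pC_1^{-p}\epsilon^{(\beta-\sigma^{*}+2\delta+2\eta)p}$, and choosing $p$ large enough that $(\beta-\sigma^{*}+2\delta+2\eta)p\geq \delta+\eta$ (possible as soon as $2\delta+2\eta>\sigma^{*}-\beta$) yields $\mathbb{P}[\Omega\setminus\Omega_{\delta,\eta,\epsilon}]\leq C_{\delta,\eta}\epsilon^{\delta+\eta}$. The surplus factor $2$ in the exponents $2\delta,2\eta$ is exactly what supplies this room.

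The core of the argument is the moment estimate, which I would obtain by using that $Z^{\epsilon}(t,\cdot)$ is a centered Gaussian field, so that by the equivalence of all Gaussian moments it suffices to control the pointwise variance and the second moments of the spatial and temporal increments, and then to invoke Kolmogorov's continuity criterion on the cylinder $\mathcal{D}_T$ (equivalently a Besov embedding $B^{s}_{q,q}\hookrightarrow C$, valid for $s>(d+1)/q$ with $q$ large). The decisive input is a sharp evaluation of the mode sum in \eqref{Convolution}: splitting the series at $\lambda_k^{2}\epsilon\sim 1$ and using $1-e^{-x}\leq\min(1,x)$ gives
\[
\mathbb{E}\left[\vert Z^{\epsilon}(t,x)\vert^{2}\right]
=\epsilon^{2\sigma}\sum_{k\in\mathbb{N}^d}e_k(x)^2\int_0^t e^{-2\lambda_k^2(t-s)\epsilon}\,ds
\leq C\,\epsilon^{2\sigma-\frac{d}{4}},
\]
so that the pointwise standard deviation scales like $\epsilon^{\sigma-\frac{d}{8}}$, which equals $\epsilon^{\sigma^{*}}$ when $d=2$. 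This refinement, as opposed to the cruder bound $\int_0^t e^{-2\lambda_k^2(t-s)\epsilon}\,ds\leq (2\lambda_k^2\epsilon)^{-1}$ underlying \lemref{convollemma}, is precisely what upgrades the exponent from $\sigma-\tfrac12$ to $\sigma^{*}=\sigma-\tfrac14$. The increment bounds follow from the same splitting together with $\vert e_k(x)-e_k(y)\vert\leq C\lambda_k^{1/2}\vert x-y\vert$ and the smoothing of the bi-Laplacian semigroup as in \lemref{bruitlemma1}; they carry a strictly positive H\"{o}lder exponent in both space and time for $d=2,3$, so that Kolmogorov's theorem applies.

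I expect the main obstacle to be the control of the spatial supremum, i.e.\ the passage into $C(\mathcal{D})$: for space-time white noise in $d=2,3$ the fractional regularity furnished by \lemref{bruitlemma1}, namely $\mathbb{H}^{2-\frac{d}{2}-\vartheta}$, lies strictly below the embedding threshold $\mathbb{H}^{d/2}\hookrightarrow C(\mathcal{D})$, so a purely $L^2$-based Sobolev argument cannot close. The gap has to be bridged by passing to $L^q$-based (Besov) regularity with $q$ arbitrarily large, which is legitimate only because $Z^{\epsilon}$ is Gaussian and all its $L^q(\Omega)$-norms are comparable; here the condition \eqref{convergencecondition} guarantees summability of the relevant mode sums. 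With the moment estimate in hand, the proof is completed by the Markov bound of the first paragraph, taking $p$ large and letting $C_{\delta,\eta}$ depend on $\sigma,\eta,C_1$ and on the chosen $p$. Finally, note that the moment exponent produced this way is $\beta=\sigma-\tfrac{d}{8}$, which coincides with $\sigma^{*}$ for $d=2$ and equals $\sigma^{*}-\tfrac18$ for $d=3$; the slack $2\delta+2\eta$ built into the threshold is exactly what compensates this dimensional deficit.
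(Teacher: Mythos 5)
Your overall strategy---Markov's inequality applied to a high moment of $\Vert Z^{\epsilon}\Vert_{C(\mathcal{D}_T)}$, with the moment controlled via Gaussianity, the mode-sum split at $\lambda_k^{2}\epsilon\sim 1$, and Kolmogorov/chaining---is essentially the argument behind \cite[Lemma 4.6]{BYZ22}, which is all the paper offers as proof (a citation for $d=2$ plus the assertion that it carries over to $d=3$). In $d=2$ your proof closes: your moment exponent $\beta=\sigma-\frac{d}{8}$ (up to an arbitrarily small loss $\rho$ from the logarithmic factors in the chaining step) equals $\sigma^{*}=\sigma-\frac{1}{4}$, so Markov gives $\mathbb{P}[\Omega^{c}_{\delta,\eta,\epsilon}]\leq C\epsilon^{(2\delta+2\eta-\rho)p}$, and taking $\rho\leq\delta+\eta$ and $p\geq 1$ yields the claim.

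The genuine gap is the case $d=3$, exactly where your final paragraph becomes hand-wavy. Your own computation gives $\beta=\sigma-\frac{3}{8}=\sigma^{*}-\frac{1}{8}$, so your Markov step needs $2\delta+2\eta>\frac{1}{8}$. But the lemma quantifies over \emph{all} $\delta>0$, $\eta\geq 0$, and the downstream applications use precisely the opposite regime: in \thmref{mainresult1} and \rmref{NoiseStrength} the parameters $\delta,\eta$ are taken arbitrarily small, so the bound is needed for $\delta+\eta\leq\frac{1}{16}$, where your argument produces nothing. Your closing claim that the slack $2\delta+2\eta$ ``is exactly what compensates this dimensional deficit'' is therefore false in the regime that matters. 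Worse, the defect cannot be repaired by sharpening the moment estimate, because your exponent is sharp: for fixed $t\in(0,T)$ the modes $\vert k\vert\lesssim\epsilon^{-1/4}$ alone give $\int_{\mathcal{D}}\mathbb{E}\vert Z^{\epsilon}(t,x)\vert^{2}dx\geq c\,\epsilon^{2\sigma-3/4}$, hence there is a point $x_0$ with $\mathbb{E}\vert Z^{\epsilon}(t,x_0)\vert^{2}\geq c\,\epsilon^{2\sigma-3/4}$, and Gaussian anti-concentration at this single point yields $\mathbb{P}[\Omega_{\delta,\eta,\epsilon}]\leq\mathbb{P}\left[\vert Z^{\epsilon}(t,x_0)\vert\leq C_1\epsilon^{\sigma^{*}-2\delta-2\eta}\right]\leq C\epsilon^{\frac{1}{8}-2\delta-2\eta}$, which tends to zero when $2\delta+2\eta<\frac{1}{8}$. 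In other words, with $\sigma^{*}=\sigma-\frac{1}{4}$ the event \eqref{defOmega1} is asymptotically \emph{unlikely} in $d=3$ for small $\delta,\eta$; the statement could only hold with $\sigma^{*}$ replaced by $\sigma-\frac{d}{8}$ (or with $\delta+\eta$ bounded from below). Your bookkeeping, done honestly, thus uncovers a real obstruction---one that affects not only your proof but also the paper's one-line extension of the two-dimensional argument of \cite{BYZ22} to $d=3$.
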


\begin{proof}
The proof is given in \cite[Lemma 4.6]{BYZ22} for $d=2$ but a close
inspection reveals that it is also valid for $d=3$. 
\end{proof}

\begin{lemma}
\label{Semigroup}
For any $C_1, \vartheta>0$, $\kappa\geq 0$, there exists a  constant $C_{\vartheta, \kappa}=C(\vartheta, \kappa, C_1)>0$  such that $\mathbb{P}[\widetilde{\Omega}_{\vartheta,\kappa, \epsilon}]\geq 1-C_{\vartheta, \kappa}\epsilon^{\vartheta+\kappa}$. 
\end{lemma}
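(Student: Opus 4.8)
The plan is to deduce the statement directly from the moment bound of \lemref{bruitlemma1} by a single application of Chebyshev's (Markov's) inequality, in complete analogy with the companion estimate \lemref{LuboLemma1} taken from \cite{BYZ22}. The key simplification here is that the event defining $\widetilde{\Omega}_{\vartheta,\kappa,\epsilon}$ is phrased in the $\mathbb{L}^2$-based fractional norm $\Vert(-\Delta)^{1-\frac{d}{4}-\frac{\vartheta}{2}}Z^{\epsilon}(t)\Vert$, whose (supremum-in-time) moments are exactly what \lemref{bruitlemma1} controls; hence, unlike the $C(\mathcal{D}_T)$-bound of \lemref{LuboLemma1}, no space-time continuity / Kolmogorov argument is needed. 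Writing $X_\epsilon:=\sup_{t\in[0,T]}\Vert(-\Delta)^{1-\frac{d}{4}-\frac{\vartheta}{2}}Z^{\epsilon}(t)\Vert^2$, the complement of $\widetilde{\Omega}_{\vartheta,\kappa,\epsilon}$ is precisely $\{X_\epsilon>C_1\epsilon^{2\gamma-\vartheta-\kappa-1}\}$, so the first step is simply to estimate its probability by Markov.

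Concretely, taking $p=2$ in \lemref{bruitlemma1} gives $\mathbb{E}[X_\epsilon]\leq C\epsilon^{2\sigma-1}$, whence
\[
\mathbb{P}\big[\widetilde{\Omega}_{\vartheta,\kappa,\epsilon}^{\,c}\big]
=\mathbb{P}\big[X_\epsilon> C_1\epsilon^{2\gamma-\vartheta-\kappa-1}\big]
\leq \frac{\mathbb{E}[X_\epsilon]}{C_1\epsilon^{2\gamma-\vartheta-\kappa-1}}
\leq \frac{C}{C_1}\,\epsilon^{\,2(\sigma-\gamma)+\vartheta+\kappa}.
\]
The threshold exponent $2\gamma-\vartheta-\kappa-1$ is calibrated against the natural scale $\epsilon^{2\sigma-1}$ of $X_\epsilon$, so that the surplus in the exponent above is exactly $2(\sigma-\gamma)+\vartheta+\kappa$. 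Using the standing relation $\gamma\leq\sigma$ between the error exponent and the noise scaling, this surplus is at least $\vartheta+\kappa$; since $\epsilon\leq 1$ we conclude $\mathbb{P}[\widetilde{\Omega}_{\vartheta,\kappa,\epsilon}^{\,c}]\leq C_{\vartheta,\kappa}\,\epsilon^{\vartheta+\kappa}$ with $C_{\vartheta,\kappa}=C/C_1=C(\vartheta,\kappa,C_1)$, and taking complements gives the claim.

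The argument is essentially routine, and the only point requiring genuine care is the bookkeeping of exponents — in particular, verifying that the threshold in $\widetilde{\Omega}_{\vartheta,\kappa,\epsilon}$ is placed above the typical size of $X_\epsilon$, i.e. that $2\gamma-\vartheta-\kappa-1\leq 2\sigma-1$. Should the fixed value of $\gamma$ satisfy only $\gamma<\sigma+\tfrac{\vartheta+\kappa}{2}$ rather than $\gamma\leq\sigma$, I would instead invoke \lemref{bruitlemma1} with a larger moment $p\geq\frac{\vartheta+\kappa}{\sigma-\gamma+(\vartheta+\kappa)/2}$: since $X_\epsilon^{p/2}=\sup_{t\in[0,T]}\Vert(-\Delta)^{1-\frac{d}{4}-\frac{\vartheta}{2}}Z^{\epsilon}(t)\Vert^{p}$, the same Markov estimate then yields the exponent $p\big(\sigma-\gamma+\tfrac{\vartheta}{2}+\tfrac{\kappa}{2}\big)\geq\vartheta+\kappa$. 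The embedding $\mathbb{L}^p\hookrightarrow\mathbb{L}^2$ and the summability $\sum_{i}\lambda_i^{-\frac{d}{2}-\vartheta}<\infty$ (which force $\vartheta>0$, reflecting the loss of $\tfrac{d}{2}+\vartheta$ derivatives relative to $\mathbb{L}^2$-valued noise) are already packaged inside \lemref{bruitlemma1}, so beyond Chebyshev nothing further is needed.
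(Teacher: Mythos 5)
Your mechanism is the same as the paper's: apply Markov's inequality to $X_\epsilon:=\sup_{t\in[0,T]}\Vert(-\Delta)^{1-\frac{d}{4}-\frac{\vartheta}{2}}Z^{\epsilon}(t)\Vert^2$ and use the $p=2$ moment bound of \lemref{bruitlemma1}. The gap is in the step where you close the exponent count. You reach $\mathbb{P}[\widetilde{\Omega}^{\,c}_{\vartheta,\kappa,\epsilon}]\leq \frac{C}{C_1}\epsilon^{2(\sigma-\gamma)+\vartheta+\kappa}$ and then invoke a ``standing relation $\gamma\leq\sigma$''. No such relation exists in this paper: \thmref{mainresult1} requires $\sigma>\frac{3}{4}\gamma+\frac{1}{4}+2\delta+2\eta$ with $\gamma>\frac{(7-2\alpha)r+6\alpha-8}{r-2}$, and by \rmref{NoiseStrength} the admissible regime includes $\gamma$ just above $16$ together with $\sigma$ just above $\frac{49}{4}$, i.e.\ $\sigma-\gamma\approx-4$. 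For small $\vartheta,\kappa$ your exponent $2(\sigma-\gamma)+\vartheta+\kappa$ is then negative, so your Markov bound grows as $\epsilon\to 0$ instead of decaying. The higher-moment fallback cannot repair this: it needs $\sigma-\gamma+\frac{\vartheta+\kappa}{2}>0$, and when that quantity is nonpositive, increasing $p$ makes the exponent $p\left(\sigma-\gamma+\frac{\vartheta+\kappa}{2}\right)$ worse, not better.

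The underlying issue is that you took the exponent $2\gamma-\vartheta-\kappa-1$ in the displayed definition of $\widetilde{\Omega}_{\vartheta,\kappa,\epsilon}$ at face value. That $\gamma$ is evidently a typo for $\sigma$: the paper's own one-line proof divides by $C_1\epsilon^{2\sigma-\vartheta-\kappa-1}$, and with that threshold the surplus is exactly $\vartheta+\kappa$, with no relation between $\gamma$ and $\sigma$ required. This is also the only reading under which the lemma is plausible: $X_\epsilon$ has typical size of order $\epsilon^{2\sigma-1}$ (which is what \lemref{bruitlemma1} reflects), so if $\gamma>\sigma+\frac{\vartheta+\kappa}{2}$ the threshold $C_1\epsilon^{2\gamma-\vartheta-\kappa-1}$ would sit far below that size and the event could not have probability tending to one; moreover it is the $\sigma$-based bound on $Z^\epsilon$ that is actually used later to absorb the noise term in the proof of \eqref{result5}. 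So: right tool, correct first computation, but the way you dispose of the $\gamma$-versus-$\sigma$ mismatch is a genuine error; the fix is to read the threshold with $\sigma$ in place of $\gamma$, after which your first display gives the claim verbatim.
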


\begin{proof}
Using Markov's inequality and \lemref{bruitlemma1} it holds that 
\begin{align*}
\mathbb{P}[\widetilde{\Omega}^c_{\vartheta, \kappa, \epsilon}]\leq \frac{\mathbb{E}\left[\sup_{t\in[0, T]}\Vert (-\Delta)^{1-\frac{d}{4}-\frac{\vartheta}{2}}Z^{\epsilon}(t)\Vert^2\right]}{C_1\epsilon^{2\sigma-\vartheta-\kappa-1}}\leq C_{\vartheta, \kappa}\epsilon^{\vartheta+\kappa}.
\end{align*}
The statement follows after noting that $\mathbb{P}[\widetilde{\Omega}_{\vartheta, \kappa, \epsilon}]=1-\mathbb{P}[\widetilde{\Omega}^c_{\vartheta, \kappa, \epsilon}]$. 
\end{proof}

We introduce the following stopping time 
\begin{align}
\label{stoppingtime}
T_{\epsilon}=T\wedge\inf\left\{t>0:\;\int_0^t\Vert Y^{\epsilon}(s)\Vert^3_{\mathbb{L}^3}ds>\epsilon^{\gamma}\right\},
\end{align}
for some $\gamma>0$, which will be specified later. 

In the next lemma we derive an estimate of $Y^{\epsilon}$ up to the stopping time $T_{\epsilon}$ on  $\Omega_{\delta,\eta, \epsilon}$.
\begin{lemma}
\label{lemmaerrorOmega2}
The following estimate holds for the solution of \eqref{Yepsilon1} for $\omega\in \Omega_{\delta,\eta,\epsilon}$ and $t\leq T_{\epsilon}$
\begin{align*}
&\sup_{s\in[0, t]}\Vert Y^{\epsilon}(s)\Vert^2_{\mathbb{H}^{-1}}+ \epsilon^4\int_0^t\Vert \nabla Y^{\epsilon}(s)\Vert^2ds+\frac{13}{8\epsilon}\int_0^t\Vert Y^{\epsilon}(s)\Vert^4_{\mathbb{L}^4}ds\nonumber\\
&\leq C\left(\epsilon^{\gamma-1}+\epsilon^{\frac{4}{3}(\sigma^*-2\delta-2\eta)-1}+\epsilon^{2(\sigma^*-2\delta-2\eta)-1}+\epsilon^{\frac{8}{3}(\sigma^*-2\delta-2\eta)-1}\right)\nonumber\\
&\quad+C\left(\epsilon^{4(\sigma^*-2\delta-2\eta)-1}+\epsilon^{ \frac{3K-5}{2}}\right),
\end{align*}
where $C$ is a positive constant independent of $\epsilon$ and $T_{\epsilon}$. 
\end{lemma}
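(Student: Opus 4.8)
The plan is to take the pathwise a priori estimate of \lemref{Formulalemma} as the starting point and to bound each of the six terms on its right-hand side separately, using the stopping time \eqref{stoppingtime} to control the $\mathbb{L}^3$-contribution of $Y^\epsilon$ and the event $\Omega_{\delta,\eta,\epsilon}$ of \eqref{defOmega1} to control the norms of the stochastic convolution $Z^\epsilon$. A preliminary observation is that every term on the right-hand side of \lemref{Formulalemma} is an integral over $[0,t]$ and hence nondecreasing in $t$; applying that lemma at each intermediate time $s\le t$ and using this monotonicity, the bound obtained at the endpoint dominates $\Vert Y^\epsilon(s)\Vert^2_{\mathbb{H}^{-1}}$ for all $s\le t$, which produces the $\sup_{s\in[0,t]}\Vert Y^\epsilon(s)\Vert^2_{\mathbb{H}^{-1}}$ on the left, while the two integral terms on the left are read off directly at time $t$. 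Up to a factor absorbed into $C$, this reduces the proof to estimating six explicit integrals on $\Omega_{\delta,\eta,\epsilon}\cap\{t\le T_\epsilon\}$.

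For the first term I would invoke $t\le T_\epsilon$, so that \eqref{stoppingtime} gives $\int_0^t\Vert Y^\epsilon(s)\Vert^3_{\mathbb{L}^3}\,ds\le \epsilon^{\gamma}$ and hence $\frac{C}{\epsilon}\int_0^t\Vert Y^\epsilon\Vert^3_{\mathbb{L}^3}\le C\epsilon^{\gamma-1}$. For the four convolution terms I would restrict to $\omega\in\Omega_{\delta,\eta,\epsilon}$ and exploit the boundedness of $\mathcal{D}$ to pass from the space-time sup-norm to any $\mathbb{L}^p$-norm: for every $s\le t$,
\begin{align*}
\Vert Z^\epsilon(s)\Vert_{\mathbb{L}^p}\le |\mathcal{D}|^{1/p}\,\Vert Z^\epsilon\Vert_{C(\mathcal{D}_T)}\le C\,\epsilon^{\sigma^*-2\delta-2\eta}.
\end{align*}
Raising this to the powers $\tfrac43,2,\tfrac83,4$ that appear in \lemref{Formulalemma}, integrating over $[0,T]$ and multiplying by $\epsilon^{-1}$ yields precisely the four terms $\epsilon^{\frac43(\sigma^*-2\delta-2\eta)-1}$, $\epsilon^{2(\sigma^*-2\delta-2\eta)-1}$, $\epsilon^{\frac83(\sigma^*-2\delta-2\eta)-1}$ and $\epsilon^{4(\sigma^*-2\delta-2\eta)-1}$. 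Finally, for the residual term I would insert the deterministic bound $\Vert r^\epsilon_{\mathsf{A}}\Vert_{C(\mathcal{D}_T)}\le C\epsilon^{K-2}$ from \eqref{contconv}, which gives
\begin{align*}
C\epsilon^{\frac12}\int_0^t\Vert r^\epsilon_{\mathsf{A}}(s)\Vert^{\frac32}_{C(\mathcal{D})}\,ds\le C\epsilon^{\frac12}\cdot\epsilon^{\frac32(K-2)}=C\epsilon^{\frac{3K-5}{2}},
\end{align*}
the last term of the claimed bound.

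Collecting these six contributions and absorbing $T$, $|\mathcal{D}|$, $C_1$ and all remaining $\epsilon$-independent constants into $C$ produces exactly the asserted inequality, with $C$ independent of $\epsilon$ and of $T_\epsilon$.

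Since the genuine analytic work (the spectral estimate of \propref{Spectralestimate}, the interpolation $\Vert v\Vert^2\le\Vert v\Vert_{\mathbb{H}^{-1}}\Vert\nabla v\Vert$, and Gronwall's lemma) has already been carried out inside \lemref{Formulalemma}, this step is essentially bookkeeping and I do not expect a substantive obstacle. The only points requiring care are (i) the passage to the supremum, which relies on the monotonicity of the right-hand side in $t$ and should be stated explicitly, and (ii) the correct combination of the two localizations, so that the restriction $t\le T_\epsilon$ is used solely for the $Y^\epsilon$-term while the event $\Omega_{\delta,\eta,\epsilon}$ is used solely for the $Z^\epsilon$-terms, both being valid on $\Omega_{\delta,\eta,\epsilon}\cap\{t\le T_\epsilon\}$.
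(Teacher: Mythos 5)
Your proposal is correct and follows essentially the same route as the paper: start from \lemref{Formulalemma}, bound the $Y^{\epsilon}$-term via the stopping time \eqref{stoppingtime}, the four $Z^{\epsilon}$-terms via the definition of $\Omega_{\delta,\eta,\epsilon}$ together with the embedding $C(\mathcal{D})\hookrightarrow\mathbb{L}^q$, and the residual term via \eqref{contconv}. Your explicit justification of the passage to $\sup_{s\in[0,t]}\Vert Y^{\epsilon}(s)\Vert^2_{\mathbb{H}^{-1}}$ through monotonicity of the right-hand side in $t$ is a detail the paper leaves implicit, and it is a welcome addition.
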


\begin{proof}
 From   \lemref{Formulalemma},  using \eqref{Alakiresult1}, the embbeding $C(\mathcal{D})\hookrightarrow \mathbb{L}^q$, $q\geq 1$,   recalling the definitions of $T_{\epsilon}$ \eqref{stoppingtime} and $\Omega_{\delta, \eta, \epsilon}$ \eqref{defOmega1}, yields
 for any $t<T_{\epsilon}$ 
\begin{align*}
&\sup_{s\in[0, t]}\Vert Y^{\epsilon}(s)\Vert^2_{\mathbb{H}^{-1}}+ \epsilon^4\int_0^t\Vert \nabla Y^{\epsilon}(s)\Vert^2ds+\frac{13}{8\epsilon}\int_0^t\Vert Y^{\epsilon}(s)\Vert^4_{\mathbb{L}^4}ds\nonumber\\
&\leq\frac{C}{\epsilon}\int_0^t\Vert Y^{\epsilon}(s)\Vert^3_{\mathbb{L}^3}ds+\frac{C}{\epsilon}\int_0^t\Vert Z^{\epsilon}(s)\Vert^{\frac{4}{3}}_{\mathbb{L}^{\frac{4}{3}}}ds+\frac{C}{\epsilon}\int_0^t\Vert Z^{\epsilon}(s)\Vert^2_{\mathbb{L}^2}ds\nonumber\\
&\quad+\frac{C}{\epsilon}\int_0^t\Vert Z^{\epsilon}(s)\Vert^{\frac{8}{3}}_{\mathbb{L}^{\frac{8}{3}}}ds+\frac{C}{\epsilon}\int_0^t\Vert Z^{\epsilon}(s)\Vert^4_{\mathbb{L}^4}ds+ C\epsilon^{\frac{1}{2}}\int_0^t\Vert r^{\epsilon}_{\mathsf{A}}(s)\Vert^{\frac{3}{2}}_{C(\mathcal{D})}ds\nonumber\\
&\leq C\left(\epsilon^{\gamma-1}+\epsilon^{\frac{4}{3}(\sigma^*-2\delta-2\eta)-1}+\epsilon^{2(\sigma^*-2\delta-2\eta)-1}+\epsilon^{\frac{8}{3}(\sigma^*-2\delta-2\eta)-1}\right)\nonumber\\
&\quad+C\left(\epsilon^{4(\sigma^*-2\delta-2\eta)-1}+\epsilon^{\frac{3K-5}{2}}\right).
\end{align*}
\end{proof}

In order to show that $T_{\epsilon}\equiv T$  on $\Omega_{\delta, \eta, \epsilon}$, we make use of the following interpolation inequality.
\begin{lemma}
\label{Fundamentallemma}
For all $2<r<3$ and $\widetilde{C}>0$, there exists a positive constant $C_{\mathcal{D}}$, independent of $\epsilon$  such that for every $v\in \mathbb{H}^1\cap \mathbb{L}^2_0$ and $\alpha\in\mathbb{R}$, it holds
\begin{align*}
\widetilde{C}\Vert v\Vert^3_{\mathbb{L}^3}\leq \epsilon^{\alpha}\Vert v\Vert^4_{\mathbb{L}^4}+C_{\mathcal{D}}\frac{\widetilde{C}^{4-r}}{4-r}\epsilon^{-\alpha(3-r)}\Vert v\Vert^{\frac{4-r}{2}}_{\mathbb{H}^{-1}}\Vert  v\Vert^{\frac{3r-4}{2}}_{\mathbb{H}^1}.
\end{align*}
\end{lemma}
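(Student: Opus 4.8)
The plan is to prove the estimate by chaining three classical convexity/interpolation facts, treating the exponent $r\in(2,3)$ as a free interpolation parameter. First I would interpolate the $\mathbb{L}^3$-norm between $\mathbb{L}^4$ and $\mathbb{L}^r$: since $r<3<4$, the Lyapunov (H\"older) inequality gives $\Vert v\Vert_{\mathbb{L}^3}\le \Vert v\Vert_{\mathbb{L}^4}^{\theta}\Vert v\Vert_{\mathbb{L}^r}^{1-\theta}$, where $\theta$ is fixed by $\frac13=\frac{\theta}{4}+\frac{1-\theta}{r}$, i.e.\ $\theta=\frac{4(3-r)}{3(4-r)}$, so that $3\theta=\frac{4(3-r)}{4-r}$ and $3(1-\theta)=\frac{r}{4-r}$. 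Raising to the third power yields $\widetilde C\Vert v\Vert_{\mathbb{L}^3}^3\le \widetilde C\,\Vert v\Vert_{\mathbb{L}^4}^{3\theta}\Vert v\Vert_{\mathbb{L}^r}^{3(1-\theta)}$.

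Second, I would apply a weighted Young inequality to the two factors. Set $p=\frac{4-r}{3-r}$ and $q=4-r$; these are conjugate since $\frac1p+\frac1q=\frac{3-r}{4-r}+\frac{1}{4-r}=1$, and both exceed $1$ for $2<r<3$. Splitting the weight as $\epsilon^{\alpha/p}\Vert v\Vert_{\mathbb{L}^4}^{3\theta}$ against $\widetilde C\epsilon^{-\alpha/p}\Vert v\Vert_{\mathbb{L}^r}^{3(1-\theta)}$ and using the identities $3\theta\,p=4$, $3(1-\theta)\,q=r$ and $\frac qp=3-r$, together with $\frac1p\le1$, produces exactly
\begin{equation*}
\widetilde C\Vert v\Vert_{\mathbb{L}^3}^3\le \epsilon^{\alpha}\Vert v\Vert_{\mathbb{L}^4}^4+\frac{\widetilde C^{4-r}}{4-r}\,\epsilon^{-\alpha(3-r)}\Vert v\Vert_{\mathbb{L}^r}^{r}.
\end{equation*}

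Third, it remains to show $\Vert v\Vert_{\mathbb{L}^r}^{r}\le C_{\mathcal D}\Vert v\Vert_{\mathbb{H}^{-1}}^{(4-r)/2}\Vert v\Vert_{\mathbb{H}^1}^{(3r-4)/2}$, equivalently $\Vert v\Vert_{\mathbb{L}^r}\le C_{\mathcal D}\Vert v\Vert_{\mathbb{H}^{-1}}^{(4-r)/(2r)}\Vert v\Vert_{\mathbb{H}^1}^{(3r-4)/(2r)}$, whose exponents sum to one. I would factor this through the Sobolev embedding $\mathbb{H}^{2-4/r}\hookrightarrow\mathbb{L}^r$ followed by the spectral interpolation $\Vert v\Vert_{2-4/r}\le\Vert v\Vert_{-1}^{1-s}\Vert v\Vert_1^{s}$ with $s=\frac{3r-4}{2r}\in(0,1)$ and intermediate order $2s-1=2-\frac4r$. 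The latter is just H\"older applied to the spectral sum $\sum_{j\ge1}\lambda_j^{2-4/r}v_j^2=\sum_{j\ge1}(\lambda_j^{-1}v_j^2)^{1-s}(\lambda_j v_j^2)^{s}$; here the hypothesis $v\in\mathbb{L}^2_0$ is essential, since the vanishing mean removes the $j=0$ mode and makes the negative power $\lambda_j^{-1}$ harmless, and it identifies the seminorms $\vert\cdot\vert_s$ with the full norms $\Vert\cdot\Vert_s$ (equivalent to the $\mathbb{H}^s$ norms, as recorded in \secref{sec_notation}).

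The main point to verify carefully --- and the only place where the dimension enters --- is the Sobolev embedding in the third step: $\mathbb{H}^{2-4/r}\hookrightarrow\mathbb{L}^r$ requires $2-\frac4r\ge d\big(\frac12-\frac1r\big)$, which after rearrangement becomes $r\cdot\frac{4-d}{2}\ge 4-d$, i.e.\ $r\ge2$ for both $d=2$ and $d=3$; since $2<r<3$ the embedding holds with a constant $C_{\mathcal D}$ depending only on $\mathcal D$, $r$ and $d$, and in particular independent of $\epsilon$. Confirming that $2-\frac4r\in\big(0,\frac d2\big)$ keeps us in the subcritical range where both the embedding and the interpolation constant are uniform. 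Tracking the constant through Steps~1--2 then reproduces precisely the prefactor $C_{\mathcal D}\frac{\widetilde C^{4-r}}{4-r}\epsilon^{-\alpha(3-r)}$ in the statement.
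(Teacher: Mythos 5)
Your proof is correct, and it shares the paper's overall skeleton: first split $\widetilde{C}\Vert v\Vert^3_{\mathbb{L}^3}$ into $\epsilon^{\alpha}\Vert v\Vert^4_{\mathbb{L}^4}$ plus an $\epsilon^{-\alpha(3-r)}$-weighted $\mathbb{L}^r$ term via Young's inequality with conjugate exponents $\frac{4-r}{3-r}$ and $4-r$, then control $\Vert v\Vert^r_{\mathbb{L}^r}$ by $C_{\mathcal{D}}\Vert v\Vert^{\frac{4-r}{2}}_{\mathbb{H}^{-1}}\Vert v\Vert^{\frac{3r-4}{2}}_{\mathbb{H}^1}$ --- but you realize both stages differently. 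In the first stage the paper applies Young pointwise to $\widetilde{C}\vert v(x)\vert^3$ and integrates over $\mathcal{D}$, whereas you interpolate $\Vert v\Vert_{\mathbb{L}^3}$ between $\mathbb{L}^4$ and $\mathbb{L}^r$ and then apply Young at the level of norms; these are equivalent, yours being marginally longer. The genuine divergence is in the second stage: the paper stays elementary, combining the Lebesgue interpolation $\Vert v\Vert^r_{\mathbb{L}^r}\leq\Vert v\Vert^{4-r}_{\mathbb{L}^2}\Vert v\Vert^{2r-4}_{\mathbb{L}^4}$ with the fixed embedding $\mathbb{H}^1\hookrightarrow\mathbb{L}^4$ and the basic inequality $\Vert v\Vert^2\leq\Vert v\Vert_{\mathbb{H}^{-1}}\Vert\nabla v\Vert$, while you factor through the fractional Sobolev embedding $\mathbb{H}^{2-\frac{4}{r}}\hookrightarrow\mathbb{L}^r$ followed by a spectral interpolation of the order-$(2-\frac{4}{r})$ norm between $\mathbb{H}^{-1}$ and $\mathbb{H}^1$; the mean-zero hypothesis plays the same role in both arguments, removing the $\lambda_0=0$ mode (the paper needs it for $\Vert v\Vert^2\leq\Vert v\Vert_{\mathbb{H}^{-1}}\Vert\nabla v\Vert$, you need it for the negative spectral power). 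Your route invokes heavier machinery but makes the dimension-dependence explicit: the condition $2-\frac{4}{r}\geq d\left(\frac12-\frac1r\right)$, which reduces to $r\geq 2$ for $d=2,3$, shows the estimate sits strictly inside the subcritical range, whereas in the paper the dimension enters only implicitly through $\mathbb{H}^1\hookrightarrow\mathbb{L}^4$ (valid for $d\leq 4$). Both arguments produce the same prefactor $C_{\mathcal{D}}\frac{\widetilde{C}^{4-r}}{4-r}\epsilon^{-\alpha(3-r)}$, the same admissible range $2<r<3$, and constants independent of $\epsilon$.
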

\begin{proof}
We recall Young's inequality
\begin{align*}
ab\leq \frac{q-1}{q}a^{\frac{q}{q-1}}+\frac{b^q}{q},\quad a,b>0,\; q>1. 
\end{align*}
For $2<r<3$,  applying the preceding estimate with $q=4-r$ leads to
\begin{align*}
\widetilde{C}\vert v\vert^3= \widetilde{C}\epsilon^{\alpha\frac{3-r}{4-r}}(\vert v\vert^4)^{\frac{3-r}{4-r}}\epsilon^{-\alpha\frac{3-r}{4-r}}\vert v\vert^{\frac{r}{4-r}}\leq \epsilon^{\alpha}\vert v\vert^4+\frac{\widetilde{C}^{4-r}}{4-r}\epsilon^{-\alpha(3-r)}\vert v\vert^r.
\end{align*}
Integrating the above estimate  over $\mathcal{D}$ leads to
\begin{align}
\label{cubic2}
\widetilde{C}\Vert v\Vert^3_{\mathbb{L}^3}\leq \epsilon^{\alpha}\Vert v\Vert^4_{\mathbb{L}^4}+\frac{\widetilde{C}^{4-r}}{4-r}\epsilon^{-\alpha(3-r)}\Vert v\Vert^r_{\mathbb{L}^r}.
\end{align}
 Let us recall the following  interpolation inequality (see \cite[Proposition 6.10]{Folland})
\begin{align*}
\Vert u\Vert_{\mathbb{L}^{q'}}\leq \Vert u\Vert^{\lambda}_{\mathbb{L}^{p'}}\Vert u\Vert^{1-\lambda}_{\mathbb{L}^{r'}},\quad u\in \mathbb{L}^{r'},\quad p'<q'<r', \quad \lambda=\frac{p'}{q'}\frac{r'-q'}{r'-p'}.
\end{align*}
Using the preceding interpolation inequality  with $p'=2$, $q'=r$ and $r'=4$, yields
\begin{align}
\label{cubic3}
\Vert v\Vert^r_{\mathbb{L}^r}\leq \Vert v\Vert^{4-r}_{\mathbb{L}^2}\Vert v\Vert^{2r-4}_{\mathbb{L}^4}\leq C_{\mathcal{D}}\Vert v\Vert^{4-r}_{\mathbb{L}^2}\Vert v\Vert^{2r-4}_{\mathbb{H}^{1}},
\end{align}
where in the last step we used the embedding $\mathbb{H}^{1}\hookrightarrow \mathbb{L}^4$. Using the interpolation inequality $\Vert v\Vert \leq \Vert v\Vert^{\frac{1}{2}}_{\mathbb{H}^{-1}}\Vert  \nabla v\Vert^{\frac{1}{2}}$, it follows from \eqref{cubic3} that
\begin{align*}
\Vert v\Vert^r_{\mathbb{L}^r}\leq C_{\mathcal{D}}\Vert v\Vert^{\frac{4-r}{2}}_{\mathbb{H}^{-1}}\Vert  v\Vert^{\frac{3r-4}{2}}_{\mathbb{H}^1}.
\end{align*}
Substituting the preceding inequality into \eqref{cubic2}  completes the proof of the lemma. 
\end{proof}

Below  we let $r$ in \lemref{Fundamentallemma}  be such that $2<r\leq \frac{8}{3}$, then it holds $\frac{1}{2}<\frac{3r-4}{4}\leq 1$. 
\begin{theorem}
 \label{mainresult1}
% Let \assref{assumption1} be fulfilled. 
 Let $u^{\epsilon}_{\mathsf{A}}$ be the solution of \eqref{model5} with large enough $K$ and let $u^{\epsilon}$ be the solution of \eqref{sch} with initial value $u^{\epsilon}_{\mathsf{A}}(0)=u^{\epsilon}(0)=u^{\epsilon}_0\in \mathbb{H}^{-1}$.
 For sufficiently small   $\eps > 0$, $\delta>0$, $\alpha>0$, $\eta\geq 0$ and any $2<r\leq \frac{8}{3}$, $\sigma,\gamma>0$ that satisfy
 \begin{align*}
 \gamma> \frac{(7-2\alpha)r+6\alpha-8}{r-2},\quad
 \sigma>\frac{3}{4}\gamma+\frac{1}{4}+2\delta+2\eta,
 \end{align*}
 there exist  positive constants $C$ and $C_{\delta,\eta}$ independent of $\epsilon$,  such that the following hold
 \begin{align}
 \label{result1}
  \mathbb{P}\left(\left\{\Vert u^{\epsilon}-u^{\epsilon}_{\mathsf{A}}\Vert^2_{L^{\infty}(0, T; \mathbb{H}^{-1})}\leq C\epsilon^{\gamma  -1}\right\}\right)\geq& 1-C_{\delta,\eta}\epsilon^{\delta+\eta},\\
  \label{result2}
 \mathbb{P}\left(\left\{\Vert w^{\epsilon}-w^{\epsilon}_{\mathsf{A}}\Vert^2_{L^1(0, T; \mathbb{H}^{-2})}\leq C\epsilon^{\frac{\gamma}{3}-1}\right\}\right)\geq& 1-C_{\delta,\eta}\epsilon^{\delta+\eta},\\
 \label{result3}
 \mathbb{P}\left(\left\{\Vert u^{\epsilon}-u^{\epsilon}_{\mathsf{A}}\Vert_{L^3(\mathcal{D}_T)}\leq C\epsilon^{\frac{\gamma}{3}}\right\}\right)\geq& 1-C_{\delta,\eta}\epsilon^{\delta+\eta},\\
 \label{result4}
 \mathbb{P}\left(\left\{\Vert u^{\epsilon}-u^{\epsilon}_{\mathsf{A}}\Vert_{L^4(\mathcal{D}_T)}\leq C\epsilon^{\frac{\gamma}{4}}\right\}\right)\geq& 1-C_{\delta,\eta}\epsilon^{\delta+\eta}.
  \end{align}
  Moreover, for any $\vartheta\in(0, 2-\frac{d}{2}]$ and $\kappa\geq 0$, there exists a constant $C_{\vartheta,\kappa}>0$ independent of $\epsilon$, such that
  \begin{align}
  \label{result5}
  \mathbb{P}\left(\left\{\Vert u^{\epsilon}-u^{\epsilon}_{\mathsf{A}}\Vert^2_{L^2(0, T; \mathbb{H}^{2-\frac{d}{2}-\vartheta})}\leq C\epsilon^{\frac{\gamma}{3}  -4}\right\}\right)\geq 1-C_{\delta,\eta}\epsilon^{\delta+\eta}-C_{\vartheta,\kappa}\epsilon^{\vartheta+\kappa}.
  \end{align}
 \end{theorem}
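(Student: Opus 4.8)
The plan is to prove the theorem by first showing that the stopping time $T_{\epsilon}$ of \eqref{stoppingtime} actually equals $T$ on the event $\Omega_{\delta,\eta,\epsilon}$, and then transferring the resulting $\epsilon$-uniform bounds on the translated solution $Y^{\epsilon}$ back to $R^{\epsilon}:=u^{\epsilon}-u^{\epsilon}_{\mathsf{A}}=Y^{\epsilon}+Z^{\epsilon}$. A structural observation used throughout is that, since $W$ is mean-free (so $m(W)=0$) and $u^{\epsilon},u^{\epsilon}_{\mathsf{A}}$ share the datum $u^{\epsilon}_0$, both $Z^{\epsilon}$ and $Y^{\epsilon}$ have zero spatial average; hence $Y^{\epsilon}\in\mathbb{H}^1\cap\mathbb{L}^2_0$ and, by Poincaré, $\Vert Y^{\epsilon}\Vert_{\mathbb{H}^1}\leq C\Vert\nabla Y^{\epsilon}\Vert$. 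This mean-freeness is precisely what makes \lemref{Fundamentallemma} applicable to $Y^{\epsilon}$.

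The main step is to show $T_{\epsilon}=T$ on $\Omega_{\delta,\eta,\epsilon}$. I would apply \lemref{Fundamentallemma} to $v=Y^{\epsilon}(s)$ with a fixed constant $\widetilde{C}$, integrate over $[0,t]$ for $t\leq T_{\epsilon}$, replace $\Vert Y^{\epsilon}\Vert_{\mathbb{H}^1}$ by $\Vert\nabla Y^{\epsilon}\Vert$, and use Hölder in time (admissible because the remark preceding the theorem guarantees $\tfrac{3r-4}{2}\leq 2$), obtaining
\begin{align*}
\int_0^t\Vert Y^{\epsilon}\Vert^3_{\mathbb{L}^3}\,ds\leq \frac{C\epsilon^{\alpha}}{\widetilde{C}}\int_0^t\Vert Y^{\epsilon}\Vert^4_{\mathbb{L}^4}\,ds+C\epsilon^{-\alpha(3-r)}\Big(\sup_{s\leq t}\Vert Y^{\epsilon}\Vert_{\mathbb{H}^{-1}}\Big)^{\frac{4-r}{2}}\Big(\int_0^t\Vert\nabla Y^{\epsilon}\Vert^2\,ds\Big)^{\frac{3r-4}{4}}.
\end{align*}
Into the right-hand side I would insert the three a priori bounds of \lemref{lemmaerrorOmega2} for $\sup\Vert Y^{\epsilon}\Vert^2_{\mathbb{H}^{-1}}$, $\epsilon^4\int\Vert\nabla Y^{\epsilon}\Vert^2$ and $\epsilon^{-1}\int\Vert Y^{\epsilon}\Vert^4_{\mathbb{L}^4}$, each dominated by $\mathcal{B}:=C(\epsilon^{\gamma-1}+\cdots)$; the hypothesis $\sigma>\tfrac34\gamma+\tfrac14+2\delta+2\eta$ (i.e.\ $\sigma^*-2\delta-2\eta>\tfrac34\gamma$) together with $K$ large makes every term of $\mathcal{B}$ other than $\epsilon^{\gamma-1}$ higher order, so $\mathcal{B}\leq C\epsilon^{\gamma-1}$. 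A short computation then shows the first term is $O(\epsilon^{\gamma+\alpha})$ while the second is $O(\epsilon^{\frac{r}{2}(\gamma-1)-(3r-4)-\alpha(3-r)})$, and the exponent of the second exceeds $\gamma$ exactly under the stated condition $\gamma>\frac{(7-2\alpha)r+6\alpha-8}{r-2}$. Thus $\int_0^{T_{\epsilon}}\Vert Y^{\epsilon}\Vert^3_{\mathbb{L}^3}\,ds<\epsilon^{\gamma}$ strictly for small $\epsilon$, which by continuity of $t\mapsto\int_0^t\Vert Y^{\epsilon}\Vert^3_{\mathbb{L}^3}$ and the definition of $T_{\epsilon}$ forces $T_{\epsilon}=T$. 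I expect this bootstrap—choosing $\alpha,\widetilde{C}$ and matching the exponent of the interpolation remainder against $\gamma$—to be the crux of the argument.

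Once $T_{\epsilon}=T$ on $\Omega_{\delta,\eta,\epsilon}$, \lemref{lemmaerrorOmega2} holds on all of $[0,T]$ with right-hand side $\leq C\epsilon^{\gamma-1}$. Writing $R^{\epsilon}=Y^{\epsilon}+Z^{\epsilon}$ and controlling $Z^{\epsilon}$ through \lemref{convollemma} and the defining inequality of $\Omega_{\delta,\eta,\epsilon}$ in \eqref{defOmega1} (its contributions being higher order because $\sigma^*-2\delta-2\eta>\tfrac34\gamma$), I would read off $\sup_t\Vert R^{\epsilon}\Vert^2_{\mathbb{H}^{-1}}\leq C\epsilon^{\gamma-1}$, $\int_0^T\Vert R^{\epsilon}\Vert^3_{\mathbb{L}^3}\leq C\epsilon^{\gamma}$ and $\int_0^T\Vert R^{\epsilon}\Vert^4_{\mathbb{L}^4}\leq C\epsilon^{\gamma}$, which are \eqref{result1}, \eqref{result3} and \eqref{result4}; the probability $\geq 1-C_{\delta,\eta}\epsilon^{\delta+\eta}$ comes from \lemref{LuboLemma1}. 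For \eqref{result2} I would substitute $w^{\epsilon}-w^{\epsilon}_{\mathsf{A}}=-\epsilon\Delta R^{\epsilon}+\tfrac{1}{\epsilon}\big(f(u^{\epsilon})-f(u^{\epsilon}_{\mathsf{A}})\big)-r^{\epsilon}_{\mathsf{A}}$ and estimate in $\mathbb{H}^{-2}$, using $\mathbb{L}^1\hookrightarrow\mathbb{H}^{-2}$ (valid since $\mathbb{H}^2\hookrightarrow C(\mathcal{D})$ for $d\leq 3$) together with Hölder and the uniform bound on $u^{\epsilon}_{\mathsf{A}}$ to control the cubic difference by lower powers of $\Vert R^{\epsilon}\Vert_{\mathbb{L}^3}$; integrating in $L^1$-time and feeding in the above $\mathbb{L}^3$ bound yields the stated rate $\epsilon^{\frac{\gamma}{3}-1}$.

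Finally, for \eqref{result5} I would split $R^{\epsilon}=Y^{\epsilon}+Z^{\epsilon}$ once more. For the $Y^{\epsilon}$ part I would reuse the argument of \eqref{depart1b}, namely $\Vert Y^{\epsilon}\Vert_{\mathbb{H}^{2-\frac{d}{2}-\vartheta}}=\Vert(-\Delta)^{1-\frac{d}{4}-\frac{\vartheta}{2}}Y^{\epsilon}\Vert\leq C\Vert\nabla Y^{\epsilon}\Vert$ (here $Y^{\epsilon}\in\mathbb{L}^2_0$ and $\tfrac12-\tfrac{d}{4}-\tfrac{\vartheta}{2}<0$), combined with the gradient bound $\int_0^T\Vert\nabla Y^{\epsilon}\Vert^2\leq\epsilon^{-4}\mathcal{B}$ from \lemref{lemmaerrorOmega2}, which is what accounts for the shift by $-4$ in the exponent. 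For the stochastic convolution I would invoke \lemref{bruitlemma1}, valid on the event $\widetilde{\Omega}_{\vartheta,\kappa,\epsilon}$ whose complement has probability $\leq C_{\vartheta,\kappa}\epsilon^{\vartheta+\kappa}$ by \lemref{Semigroup}. Working on $\Omega_{\delta,\eta,\epsilon}\cap\widetilde{\Omega}_{\vartheta,\kappa,\epsilon}$ and applying a union bound over the two exceptional sets then gives the claimed estimate with probability $\geq 1-C_{\delta,\eta}\epsilon^{\delta+\eta}-C_{\vartheta,\kappa}\epsilon^{\vartheta+\kappa}$.
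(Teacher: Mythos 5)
Your proposal is correct and follows essentially the same route as the paper: the same bootstrap/contradiction argument via the stopping time, Lemma~\ref{Fundamentallemma}, and Lemma~\ref{lemmaerrorOmega2} to get $T_\epsilon=T$ on $\Omega_{\delta,\eta,\epsilon}$, followed by the same transfer from $Y^\epsilon$ to $R^\epsilon=Y^\epsilon+Z^\epsilon$ on the events $\Omega_{\delta,\eta,\epsilon}$ and $\Omega_{\delta,\eta,\epsilon}\cap\widetilde{\Omega}_{\vartheta,\kappa,\epsilon}$ for \eqref{result1}--\eqref{result5}. Your explicit use of the mean-zero property of $Y^\epsilon$ (justifying Lemma~\ref{Fundamentallemma} and Poincar\'e) and your retention of the $r^\epsilon_{\mathsf{A}}$ term in the chemical potential difference are small points of extra care that the paper leaves implicit, but the argument is the same.
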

 %%%%%%%%%%%%%%%%%%%%%%%%%%%%
 \begin{remark}
 \label{NoiseStrength}
 Since $\alpha$ can be arbitrarily small, the smallest possible value for $\gamma$ in space dimension $d=2,3$ is $\gamma>  16$. Since $\delta$ and $\eta$ can be arbitrarily small, the smallest choice for the noise scaling parameter is $\sigma>\frac{49}{4}$.  
 \end{remark}
\begin{remark}
  \thmref{mainresult1} remains true in the case of trace class noise. It improves \cite[Theorem 3.10]{abk18} in the case $f(u)=u^3-u$ and $d=3$, since the obtained error estimates 
are in $L^p(\mathcal{D}_T)$, $p\in (2, 4]$, i.e., with $p$ exceeding the barrier $p\leq \frac{2d+8}{d+2}$ prescribed in \cite{abk18}. 
Note that, for sufficiently regular trace class noise, it is possible to derive an analogue of \eqref{result5}  in the the $\mathbb{H}^1$-norm, cf. \cite{abk18}. 
So far an analogous estimate for the stochastic Cahn-Hilliard equation with the space-time white noise
was missing, cf. \cite{BYZ22} for the case $d=2$.
Estimate \eqref{result5} depends on the spatial dimension: for $d=2$ it  holds up to $\mathbb{H}^1$-norm, while for $d=3$ it holds up to $\mathbb{H}^{\frac{1}{2}}$ (excluding the respective borderline cases).
This underlines the low regularity of the space-time white noise.
Assuming slightly better regularity of the noise (which is still lower than in \cite[Assumption 3.1]{abk18}), we achieve an estimate in $\mathbb{H}^1$ even in dimension $d=3$, see \secref{sec_h1} below.
\end{remark}

\begin{proof}
%In this subsection, we prove that $T_{\epsilon}(\omega)=T$ for all $\omega\in \Omega_{\delta,\eta, \epsilon}$. 
For $a, b\in\mathbb{R}$, let $a\wedge b:=\min\{a, b\}$. 
We set 
\begin{align*}
\gamma_1:=&(\gamma-1)\wedge (\frac{4}{3}(\sigma^*-2\delta-2\eta)-1)\wedge (2(\sigma^*-2\delta-2\eta)-1)\wedge (\frac{8}{3}(\sigma^*-2\delta)-1)\nonumber\\
&\wedge (4(\sigma^*-2\delta-2\eta)-1)\nonumber\\
=&(\gamma-1)\wedge (\frac{4}{3}(\sigma^*-2\delta-2\eta)-1).
\end{align*}
 Recall that $\sigma^*=\sigma-\frac{1}{4}$. Since $\delta$ and $\eta$ can be arbitrarily small, $\sigma^*>2\delta+2\eta$. 

We   aim to show that $T_{\epsilon}(\omega) = T$ for $\omega\in \Omega_{\delta, \eta,\epsilon}$. We proceed by contradiction and assume that $T_{\epsilon}  < T$ on $\Omega_{\delta, \eta,\epsilon}$. 
We consider $K$ large enough so that $\gamma_1\leq \frac{3K-5}{2}$. Using \lemref{Fundamentallemma} with $\alpha > 0$, H\"{o}lder's inequality and  \lemref{lemmaerrorOmega2}, yields for all $t\leq T_{\epsilon}$ 
\begin{align}
\label{Global1}
\int_0^t\Vert Y^{\epsilon}(s)\Vert^3_{\mathbb{L}^3}ds&\leq \epsilon^{\alpha}\int_0^t\Vert Y^{\epsilon}(s)\Vert^4_{\mathbb{L}^4}ds+ C\epsilon^{r\alpha-3\alpha}\int_0^t\Vert Y^{\epsilon}(s)\Vert^{\frac{4-r}{2}}_{\mathbb{H}^{-1}}\Vert  Y^{\epsilon}(s)\Vert^{\frac{3r-4}{2}}_{\mathbb{H}^1}ds\nonumber\\
&\leq C\epsilon^{\gamma_1+ 1+\alpha}+ C\epsilon^{r\alpha-3\alpha}\left(\sup_{s\in[0, t]}\Vert Y^{\epsilon}(s)\Vert^{2}_{\mathbb{H}^{-1}}\right)^{\frac{4-r}{4}}\int_0^t\Vert Y^{\epsilon}(s)\Vert^{\frac{3r-4}{2}}_{\mathbb{H}^1}ds\nonumber\\
 &\leq  C\epsilon^{\gamma_1+ 1+\alpha}+C \epsilon^{r\alpha-3\alpha}\epsilon^{\left(\frac{4-r}{4}\right)\gamma_1}\left(\int_0^t\Vert Y^{\epsilon}(s)\Vert^2_{\mathbb{H}^1}ds\right)^{\frac{3r-4}{4}}\\
 &\leq C\epsilon^{\gamma_1+ 1+\alpha}+C  \epsilon^{r\alpha-3\alpha}\epsilon^{\left(\frac{4-r}{4}\right)\gamma_1}\epsilon^{ 4-3r}\epsilon^{\left(\frac{3r-4}{4}\right)\gamma_1}\nonumber\\
 &=C\epsilon^{\gamma_1+ 1+\alpha}+C\epsilon^{ \frac{r}{2}\gamma_1+(\alpha-3)r+4-3\alpha}.\nonumber
\end{align}
The right hand side of the above inequality is bounded by $\epsilon^{\gamma}$ 
for sufficiently small $\eps$ if $\gamma_1+ 1+\alpha> \gamma$ and $ \frac{r}{2}\gamma_1+(\alpha-3)r+4-3\alpha> \gamma$. 

For $\frac{4}{3}(\sigma^*-2\delta-2\eta)> \gamma$ (i.e., for sufficiently large $\sigma> \frac{3}{4}\gamma+\frac{1}{4}+2\delta+2\eta$) we have $\gamma_1=\gamma-1$ 
and the requirement $ \frac{r}{2}\gamma_1+(\alpha-3)r+4-3\alpha> \gamma$  is equivalent to $\gamma>  \frac{(7-2\alpha)r+6\alpha-8}{r-2}$.
Consequently $\int_0^t\Vert Y^{\epsilon}(s)\Vert^3_{\mathbb{L}^3}ds\leq \epsilon^{\gamma}$ for all $t\leq T_{\epsilon}$ and $\omega\in\Omega_{\delta,\eta,\epsilon}$, which contradicts the definition of $T_{\epsilon}$. 
Hence, it holds that $T_{\epsilon}\equiv T$ on $\Omega_{\delta, \eta,\epsilon}$.

Recalling $R^{\epsilon}=Y^{\epsilon}+Z^{\epsilon}$ and noting that $T_{\epsilon}= T$ on $\Omega_{\delta, \eta,\epsilon}$, we deduce from Lemma \ref{LuboLemma1} and \ref{lemmaerrorOmega2} by the embedding $C(\mathcal{D})\hookrightarrow \mathbb{H}^{-1}$ that  on $\Omega_{\delta,\eta,\epsilon}$ it holds
\begin{align*}
\sup_{t\in[0, T]}\Vert R^{\epsilon}(t)\Vert^2_{\mathbb{H}^{-1}}&\leq 2\sup_{t\in[0, T]}\Vert Y^{\epsilon}(t)\Vert^2_{\mathbb{H}^{-1}}+2C\sup_{t\in[0, T]}\Vert Z^{\epsilon}(t)\Vert^2_{C(\mathcal{D})}\nonumber\\
&\leq C\epsilon^{\gamma_1}+C\epsilon^{2\sigma^*-4\delta-4\eta}\leq C\epsilon^{\gamma {-1}}+C\epsilon^{2\sigma^*-4\delta-4\eta}\leq C\epsilon^{\gamma { -1}},
\end{align*}
for $\gamma>  \frac{(7-2\alpha)r+6\alpha-8}{r-2}$ and $\sigma  >  \frac{3}{4}\gamma+\frac{1}{4}+2\delta+2\eta$ (recall $\sigma^*=\sigma - \frac{1}{4}$). 
Hence it follows that $\Omega_{\delta,\eta,\epsilon}\subseteq\{\omega\in \Omega: \Vert R^{\epsilon}\Vert^2_{L^{\infty}(0, T; \mathbb{H}^{-1})}\leq C\epsilon^{\gamma { -1}}\}$. Consequently, \lemref{LuboLemma1} yields that
\begin{align*}
\mathbb{P}\left(\left\{\Vert R^{\epsilon}\Vert^2_{L^{\infty}(0, T; \mathbb{H}^{-1})}\leq C\epsilon^{\gamma { -1}}\right\}\right)\geq \mathbb{P}(\Omega_{\delta,\eta,\epsilon})\geq 1-C\epsilon^{\delta+\eta}.
\end{align*}
This proves \eqref{result1}  since $R^{\epsilon} = u^\eps -u_{\mathsf{A}}^\eps$.

 Recalling that $R^{\epsilon}=Y^{\epsilon}+Z^{\epsilon}$, it follows from \eqref{Global1} and \lemref{LuboLemma1} that   on $\Omega_{\delta,\eta,\epsilon}$ we have
\begin{align}
\label{Cubic1}
\Vert R^{\epsilon}\Vert_{L^3(0, T; \mathbb{L}^3)}\leq C\epsilon^{\frac{\gamma}{3}}+C\epsilon^{\sigma^*-2\delta-2\eta}\leq C\epsilon^{\frac{\gamma}{3}},
\end{align}
for any $\gamma> \frac{5r-4}{2(r-2)}$ and $\sigma\geq \frac{\gamma}{3}+2\delta+2\eta+\frac{1}{4}$.  This implies that for such $\sigma$ and $\gamma$ we have $\Omega_{\delta,\eta,\epsilon}\subseteq\{\omega\in\Omega: \Vert R^{\epsilon}\Vert_{ L^3(0, T; \mathbb{L}^3)}\leq C\epsilon^{\frac{\gamma}{3}}\}$. Consequently form \lemref{LuboLemma1} we deduce
\begin{align*}
\mathbb{P}\left(\left\{\Vert R^{\epsilon}\Vert_{L^3(0, T; \mathbb{L}^3)}\leq C\epsilon^{\frac{\gamma}{3}}\right\}\right)\geq \mathbb{P}(\Omega_{\delta,\eta,\epsilon})\geq 1-C\epsilon^{\delta+\eta},
\end{align*} 
which yields \eqref{result3}. 

From Lemma \ref{lemmaerrorOmega2} and \ref{LuboLemma1} we deduce for $\gamma> \frac{(7-2\alpha)r+6\alpha-8}{r-2}$ and $\sigma  >  \frac{3}{4}\gamma+\frac{1}{4}+2\delta+2\eta$ that  on $\Omega_{\delta,\eta,\epsilon}$ it holds
\begin{align*}
\Vert R^{\epsilon}\Vert_{L^4(0, T; \mathbb{L}^4)}\leq C\epsilon^{\frac{\gamma_1+1}{4}}+C\epsilon^{\sigma^*-2\delta-2\eta}\leq C\epsilon^{\frac{\gamma}{4}}+C\epsilon^{\sigma^*-2\delta-2\eta}\leq  C\epsilon^{\frac{\gamma}{4}}.
\end{align*}
This implies  $\Omega_{\delta,\eta,\epsilon}\subseteq\{\omega\in\Omega:\, \Vert R^{\epsilon}\Vert_{L^4(0, T; \mathbb{L}^4)}\leq C\epsilon^{\frac{\gamma}{4}}\}$. Hence, using \lemref{LuboLemma1}, yields 
\begin{align*}
\mathbb{P}\left(\left\{\Vert R^{\epsilon}\Vert_{L^4(0, T; \mathbb{L}^4)}\leq C\epsilon^{\frac{\gamma}{4}}\right\}\right)\geq \mathbb{P}(\Omega_{\delta,\eta,\epsilon})\geq 1-C\epsilon^{\delta+\eta}.
\end{align*}
This completes the proof of \eqref{result4}. 

Using  the embedding $\mathbb{L}^1\hookrightarrow \mathbb{H}^{-2}$, \eqref{identity1}, the uniform boundedness of $u^{\epsilon}_{\mathsf{A}}$ (cf. \eqref{Alakiresult1})  and \eqref{Cubic1},
we obtain on $\Omega_{\delta, \eta, \eps}$ that
\begin{align}
\label{double2}
\Vert f(u^{\epsilon}_{\mathsf{A}})-f(u^{\epsilon})\Vert_{L^1(0, T; \mathbb{H}^{-2})}&\leq C\Vert f(u^{\epsilon}_{\mathsf{A}})-f(u^{\epsilon})\Vert_{ L^1(0, T; \mathbb{L}^1)}\nonumber\\
&  \leq C\Vert R^{\epsilon}\Vert_{L^3(0, T; \mathbb{L}^3)}+C\Vert R^{\epsilon}\Vert^2_{L^3(0, T; \mathbb{L}^3)}+\Vert R^{\epsilon}\Vert^3_{L^3(0, T; \mathbb{L}^3)}\nonumber\\
&\leq C\epsilon^{\frac{\gamma}{3}}.
\end{align}
  Recalling that $Y^{\epsilon}=R^{\epsilon}-Z^{\epsilon}$, using  the embeddings $C(\mathcal{D})\hookrightarrow \mathbb{L}^2$,  $\mathbb{L}^3\hookrightarrow\mathbb{L}^2$, \eqref{Cubic1} and Lemma \ref{LuboLemma1} yields on $ \Omega_{\delta, \eta, \eps}$ 
\begin{align}
\label{double1}
\Vert \Delta(Y^{\epsilon}+Z^{\epsilon})\Vert_{L^1(0, T; \mathbb{H}^{-2})}&\leq C\Vert Y^{\epsilon}\Vert_{L^{ 1}(0, T; \mathbb{L}^2)}+\Vert Z^{\epsilon}\Vert_{C(\mathcal{D}_T)} \leq  C\Vert R^{\epsilon}\Vert_{L^1(0, T; \mathbb{L}^3)}+\Vert Z^{\epsilon}\Vert_{C(\mathcal{D}_T)}\nonumber\\
& \leq  C\Vert R^{\epsilon}\Vert_{L^3(0, T; \mathbb{L}^3)}+\Vert Z^{\epsilon}\Vert_{C(\mathcal{D}_T)}\leq C\epsilon^{\frac{\gamma}{3}}+C\epsilon^{\sigma^*-2\delta-2\eta}\leq C\epsilon^{\frac{\gamma}{3}}, 
\end{align}
Recalling that $w^{\epsilon}-w^{\epsilon}_{\mathsf{A}}=-\epsilon\Delta(u^{\epsilon}-u^{\epsilon}_{\mathsf{A}})+\frac{1}{\epsilon}(f(u^{\epsilon})-f(u^{\epsilon}_{\mathsf{A}}))$, $R^{\epsilon}=u^{\epsilon}-u_{\mathsf{A}}=Y^{\epsilon}+Z^{\epsilon}$,  using \eqref{double1}, \eqref{double2} and the fact that $0<\epsilon\leq 1$, it follows  that  on $\Omega_{\delta, \eta,\epsilon}$ we have
\begin{align*}
\Vert w^{\epsilon}_{\mathsf{A}}-w^{\epsilon}\Vert_{L^1(0, T; \mathbb{H}^{-2})}\leq C\epsilon^{\frac{\gamma}{3}}+C\epsilon^{\frac{\gamma}{3}-1}\leq C\epsilon^{\frac{\gamma}{3}-1}.
\end{align*}
Therefore $\Omega_{\delta, \eta,\epsilon}\subseteq\{\omega\in\Omega: \Vert w^{\epsilon}_{\mathsf{A}}-w^{\epsilon}\Vert_{L^1(0, T; \mathbb{H}^{-2})}\leq C \epsilon^{\frac{\gamma}{3}-1}\}$, Using \lemref{LuboLemma1}  then yields
\begin{align*}
\mathbb{P}\left(\left\{\Vert w^{\epsilon}_{\mathsf{A}}-w^{\epsilon}\Vert_{L^1(0, T; \mathbb{H}^{-2})}\leq C \epsilon^{\frac{\gamma}{3}-1}\right\}\right)\geq \mathbb{P}(\Omega_{\delta,\eta,\epsilon})\geq 1-C_{\delta,\eta}\epsilon^{\delta+\eta}. 
\end{align*}
This completes the proof of \eqref{result2}. 

Since for any $\vartheta>0$, $1-\frac{d}{4}-\frac{\vartheta}{2}\leq \frac{1}{2}$, it follows from Lemma \ref{lemmaerrorOmega2} and \ref{Semigroup} that 
\begin{align*}
\Vert R^{\epsilon}\Vert^2_{L^2(0, T; \mathbb{H}^{2-\frac{d}{2}-\vartheta})}&\leq \Vert Y^{\epsilon}\Vert^2_{L^2(0, T; \mathbb{H}^{2-\frac{d}{2}-\vartheta})}+\Vert Z^{\epsilon}\Vert^2_{L^2(0, T; \mathbb{H}^{2-\frac{d}{2}-\vartheta})}\nonumber\\
&\leq \Vert Y^{\epsilon}\Vert^2_{L^2(0, T; \mathbb{H}^{1})}+\Vert Z^{\epsilon}\Vert^2_{L^2(0, T; \mathbb{H}^{2-\frac{d}{2}-\vartheta})}\nonumber\\
&\leq  C\epsilon^{\gamma_1-4} +C\epsilon^{2\sigma+\frac{\vartheta}{4}-1}\leq C\epsilon^{\frac{\gamma}{3} -4}+C\epsilon^{2\sigma+\frac{\vartheta}{4}-1}\leq C\epsilon^{\frac{\gamma}{3}  -4}
\end{align*}
on $\Omega_{\delta,\eta,\epsilon}\cap \widetilde{\Omega}_{\vartheta,\kappa,\epsilon}$. This implies  
\begin{align*}
\Omega_{\delta,\eta,\epsilon}\cap\widetilde{\Omega}_{\vartheta,\kappa,\epsilon}\subseteq\left\{\omega\in\Omega: \Vert R^{\epsilon}\Vert^2_{L^2(0, T; \mathbb{H}^{2-\frac{d}{2}-\vartheta})}\leq C\epsilon^{\frac{\gamma}{3}  -4}\right\}.
\end{align*} 
Using Lemma \ref{LuboLemma1}, \ref{Semigroup} and the identity $\Omega_{\delta, \eta,\epsilon}=(\Omega_{\delta,\eta,\epsilon}\cap\widetilde{\Omega}_{\vartheta,\kappa,\epsilon})\cup(\Omega_{\delta,\eta,\epsilon}\cap\widetilde{\Omega}^c_{\vartheta,\kappa,\epsilon})$, implies
\begin{align*}
\mathbb{P}\left(\left\{\Vert R^{\epsilon}\Vert^2_{L^2(0, T; \mathbb{H}^{2-\frac{d}{2}-\vartheta})}\leq C\epsilon^{\frac{\gamma}{3}  -4}\right\}\right)
&\geq \mathbb{P}(\Omega_{\delta,\eta,\epsilon}\cap\widetilde{\Omega}_{\vartheta,\kappa,\epsilon})=\mathbb{P}(\Omega_{\delta,\eta,\epsilon})-\mathbb{P}(\Omega_{\delta,\eta,\epsilon}\cap\widetilde{\Omega}^c_{\vartheta,\kappa,\epsilon})\nonumber\\
&\geq \mathbb{P}(\Omega_{\delta,\eta,\epsilon})-\mathbb{P}(\widetilde{\Omega}_{\vartheta,\kappa,\epsilon}^c)\geq 1-C_{\delta,\eta}\epsilon^{\delta+\eta}-C_{\vartheta,\kappa}\epsilon^{\vartheta+\kappa},
\end{align*}
which yields \eqref{result5}. The proof of \thmref{mainresult1} is therefore completed. 
\end{proof}

The corollary below provides an estimate of the difference between the solutions of the stochastic and the deterministic Cahn-Hilliard equation
and implies convergence to the solution of the deterministic problem for $\eps\rightarrow 0$ if $\delta+\eta>1$ and $\vartheta+\kappa>1$.
\begin{corollary}
\label{mainresult3}
Let the assumptions of \thmref{mainresult1} be fulfilled. Then it holds that
\begin{align*}
&\mathbb{E}\left[\Vert u^{\epsilon}-u^{\epsilon}_{D}\Vert^2_{L^{\infty}(0, T; \mathbb{H}^{-1})}+\frac{1}{\epsilon}\Vert u^{\epsilon}-u^{\epsilon}_{D}\Vert^4_{L^4(0, T; \mathbb{L}^4)}+ \epsilon ^4\Vert u^{\epsilon}-u^{\epsilon}_{D}\Vert^2_{L^2(0, T; \mathbb{H}^{2-\frac{d}{2}-\vartheta})}\right]\nonumber\\
&\leq C\epsilon^{\gamma-1}+C_{\delta, \eta}\epsilon^{\frac{\delta+\eta-1}{2}}+C_{\vartheta, \kappa}\epsilon^{\frac{\vartheta+\kappa-1}{2}}.
\end{align*}
\end{corollary}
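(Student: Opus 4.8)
The plan is to transfer the high-probability bounds of \thmref{mainresult1} into a bound in expectation by inserting the approximation $u^{\epsilon}_{\mathsf{A}}$ and splitting the sample space. First I would write, for each of the three norms appearing on the left-hand side, $u^{\epsilon}-u^{\epsilon}_{D}=(u^{\epsilon}-u^{\epsilon}_{\mathsf{A}})-(u^{\epsilon}_{D}-u^{\epsilon}_{\mathsf{A}})=R^{\epsilon}-R^{\epsilon}_{D}$ with $R^{\epsilon}_{D}:=u^{\epsilon}_{D}-u^{\epsilon}_{\mathsf{A}}$, and use $(a+b)^{q}\leq 2^{q-1}(a^{q}+b^{q})$ so that the quantity to be estimated is controlled by the sum of the corresponding quantities for the stochastic error $R^{\epsilon}$ and the deterministic error $R^{\epsilon}_{D}$. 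Since $R^{\epsilon}_{D}$ is deterministic, its expectation is trivial, and here I would invoke the deterministic estimates of \secref{sec_det} (\thmref{mainresult2}): these give $\Vert R^{\epsilon}_{D}\Vert^{2}_{L^{\infty}(0,T;\mathbb{H}^{-1})}\leq C\epsilon^{\gamma-1}$ and $\tfrac{1}{\epsilon}\Vert R^{\epsilon}_{D}\Vert^{4}_{L^{4}(0,T;\mathbb{L}^{4})}\leq C\epsilon^{\gamma-1}$, while the embedding $\mathbb{H}^{1}\hookrightarrow\mathbb{H}^{2-\frac{d}{2}-\vartheta}$ (valid since $2-\tfrac{d}{2}-\vartheta\leq 1$ for $d=2,3$) yields $\epsilon^{4}\Vert R^{\epsilon}_{D}\Vert^{2}_{L^{2}(0,T;\mathbb{H}^{2-d/2-\vartheta})}\leq C\epsilon^{4}\cdot\epsilon^{\gamma-5}=C\epsilon^{\gamma-1}$. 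This accounts for the first term $C\epsilon^{\gamma-1}$ in the claimed bound.

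For the stochastic error I would decompose $\Omega=G\cup G^{c}$ with the good set $G:=\Omega_{\delta,\eta,\epsilon}\cap\widetilde{\Omega}_{\vartheta,\kappa,\epsilon}$ and write each expectation as $\mathbb{E}[\mathbf{1}_{G}(\cdot)]+\mathbb{E}[\mathbf{1}_{G^{c}}(\cdot)]$. On $G$ one has $T_{\epsilon}=T$ (established inside the proof of \thmref{mainresult1}), so \lemref{lemmaerrorOmega2} together with \lemref{LuboLemma1} and \lemref{Semigroup} delivers the pathwise bounds $\Vert R^{\epsilon}\Vert^{2}_{L^{\infty}(\mathbb{H}^{-1})}\leq C\epsilon^{\gamma-1}$, $\tfrac{1}{\epsilon}\Vert R^{\epsilon}\Vert^{4}_{L^{4}}\leq C\epsilon^{\gamma-1}$ and, using $\Vert Y^{\epsilon}\Vert^{2}_{L^{2}(\mathbb{H}^{2-d/2-\vartheta})}\leq\Vert Y^{\epsilon}\Vert^{2}_{L^{2}(\mathbb{H}^{1})}\leq C\epsilon^{\gamma-5}$ and the fact that the $Z^{\epsilon}$ contribution is of higher order for $\sigma$ large, $\epsilon^{4}\Vert R^{\epsilon}\Vert^{2}_{L^{2}(\mathbb{H}^{2-d/2-\vartheta})}\leq C\epsilon^{\gamma-1}$. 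As $\mathbb{P}(G)\leq 1$, the good-set part again contributes only $C\epsilon^{\gamma-1}$.

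The main obstacle is the bad-set part $\mathbb{E}[\mathbf{1}_{G^{c}}(\cdot)]$, where the pathwise estimates fail and one must trade the smallness of $\mathbb{P}(G^{c})$ against the negative-power growth of the moments of $R^{\epsilon}$. Here I would apply Hölder's inequality together with the a priori moment estimates $\mathcal{E}_{p}(u^{\epsilon})$ of \thmref{regularitytheorem}, observing that $u^{\epsilon}_{\mathsf{A}}$ is deterministic and uniformly bounded so that $R^{\epsilon}$ inherits the moment growth of $u^{\epsilon}$. Splitting $G^{c}\subseteq\Omega^{c}_{\delta,\eta,\epsilon}\cup\widetilde{\Omega}^{c}_{\vartheta,\kappa,\epsilon}$ and combining the moment bounds with $\mathbb{P}(\Omega^{c}_{\delta,\eta,\epsilon})\leq C_{\delta,\eta}\epsilon^{\delta+\eta}$ (\lemref{LuboLemma1}) and $\mathbb{P}(\widetilde{\Omega}^{c}_{\vartheta,\kappa,\epsilon})\leq C_{\vartheta,\kappa}\epsilon^{\vartheta+\kappa}$ (\lemref{Semigroup}), and choosing the Hölder exponent so as to balance these powers, the $\mathbb{H}^{-1}$ and $\mathbb{L}^{4}$ contributions on $\Omega^{c}_{\delta,\eta,\epsilon}$ are bounded by $C_{\delta,\eta}\epsilon^{(\delta+\eta-1)/2}$, while the $\mathbb{H}^{2-d/2-\vartheta}$ contribution on $\widetilde{\Omega}^{c}_{\vartheta,\kappa,\epsilon}$ (controlled through the fractional term $\epsilon^{p/2}\Vert(-\Delta)^{1-\frac{d}{4}-\frac{\vartheta}{2}}u^{\epsilon}\Vert^{p}_{L^{2}}$ in $\mathcal{E}_{p}$, the $\epsilon^{4}$ prefactor making it harmless) is bounded by $C_{\vartheta,\kappa}\epsilon^{(\vartheta+\kappa-1)/2}$. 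Summing the deterministic, good-set and bad-set contributions gives the stated estimate, and the convergence assertion follows since $\delta+\eta>1$ and $\vartheta+\kappa>1$ render all three exponents positive. The delicate point throughout is the bad-set balancing: the $\mathcal{E}_{p}$-moments blow up like negative powers of $\epsilon$, so the smallness of $\mathbb{P}(G^{c})$ must be spent carefully, and it is precisely this interplay that dictates the exponents $(\delta+\eta-1)/2$ and $(\vartheta+\kappa-1)/2$.
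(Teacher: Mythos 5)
Your proposal is correct and takes essentially the same approach as the paper: both insert $u^{\epsilon}_{\mathsf{A}}$ via the triangle inequality, bound the deterministic error $u^{\epsilon}_D-u^{\epsilon}_{\mathsf{A}}$ using \thmref{mainresult2}, and convert the high-probability estimates of \thmref{mainresult1} into an expectation estimate by splitting $\Omega$ into the good set (where the pathwise bounds of \lemref{lemmaerrorOmega2}, \lemref{LuboLemma1} and \lemref{Semigroup} apply) and its complement, on which Cauchy--Schwarz is combined with the moment estimates of \thmref{regularitytheorem} and deterministic a priori bounds for $u^{\epsilon}_{\mathsf{A}}$. The only minor difference is that the paper controls the required second moments of $u^{\epsilon}_{\mathsf{A}}$ in the gradient/fractional Sobolev norm through its $C^1$-closeness to $u^{\epsilon}_D$ and an a priori estimate for $u^{\epsilon}_D$, since uniform boundedness of $u^{\epsilon}_{\mathsf{A}}$ alone (which is what your argument invokes) does not control that term; your observation that the $\epsilon^4$ prefactor renders it harmless is indeed how it is absorbed.
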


\begin{proof}
From \cite[Theorem 2.1]{abc94} or \cite[Theorem 4.1 \& Remark 4.6]{abc94},  $u^{\epsilon}_{\mathrm{A}}\in C^2(\overline{\mathcal{D}}_T)\cap \mathbb{L}^2_0$ and
\begin{align}
\label{addition0}
\Vert u^{\epsilon}_{\mathrm{A}}-u^{\epsilon}_{D}\Vert^2_{L^{\infty}(0, T; \mathbb{H}^{-1})}+\Vert \nabla(u^{\epsilon}_{\mathrm{A}}-u^{\epsilon}_{D})\Vert^2_{L^2(0, T; \mathbb{L}^2)}\leq C\epsilon^{2\sigma}. 
\end{align}
Testing \eqref{model3} with $(-\Delta)^{-1}u^{\epsilon}_D$, along the same lines as in the  proof of \eqref{Regularesti}, yields
\begin{align}
\label{addition1}
\Vert u^{\epsilon}_D\Vert^2_{L^{\infty}(0, T; \mathbb{H}^{-1})}+\frac{1}{\epsilon}\Vert u^{\epsilon}_D\Vert^4_{L^4(0, T; \mathbb{L}^4)}+\epsilon\Vert \nabla u^{\epsilon}_{D}\Vert^2_{L^{2}(0, T; \mathbb{L}^2)}\leq C\epsilon^{-1}.
\end{align}
From \cite[Theorem 2.3]{abc94}, we have $\Vert u^{\epsilon}_{\mathrm{A}}-u^{\epsilon}_{D}\Vert_{C^1(\mathcal{D}_T)}\leq C\epsilon$. 
Using triangle inequality, \eqref{addition1} and  \thmref{mainresult2} below, it follows that 
\begin{align}
\label{addition2}
\mathcal{E}(u^{\epsilon}_{\mathrm{A}}):=\Vert u^{\epsilon}_{\mathrm{A}}\Vert^2_{L^{\infty}(0, T; \mathbb{H}^{-1})}+\frac{1}{\epsilon}\Vert u^{\epsilon}_{\mathrm{A}}\Vert^4_{L^4(0, T; \mathbb{L}^4)}+ \epsilon^4\Vert \nabla u^{\epsilon}_{\mathrm{A}}\Vert^2_{L^{2}(0, T; \mathbb{L}^2)}\leq C\epsilon^{-1}.
\end{align}  
Next, we consider the following subpace $\widetilde{\Omega}_1\subset\Omega$
\begin{align*}
\widetilde{\Omega}_1&=\left\{\omega\in\Omega:\; \Vert u^{\epsilon}-u^{\epsilon}_{\mathrm{A}}\Vert^2_{L^{\infty}(0, T; \mathbb{H}^{-1})}+\frac{1}{\epsilon}\Vert u^{\epsilon}-u^{\epsilon}_{\mathrm{A}}\Vert^4_{L^4(0, T; \mathbb{L}^4)}\right.\nonumber\\
&\left.\hspace{1cm}+\epsilon^4\Vert(-\Delta)^{1-\frac{d}{4}-\frac{\vartheta}{2}}(u^{\epsilon}-u^{\epsilon}_{\mathrm{A}})\Vert^2_{L^2(0, T; \mathbb{L}^2)}\leq C\epsilon^{\gamma  -1}\right\}. 
\end{align*}
By \thmref{mainresult1}, it holds $\mathbb{P}[\widetilde{\Omega}_1^c]\leq C_{\delta, \eta}\epsilon^{\delta+\eta}+C_{\vartheta, \kappa}\epsilon^{\vartheta+\kappa}$. We set
\begin{align*}
{\tt Err}_{\mathrm{A}}:=& \Vert u^{\epsilon}-u^{\epsilon}_{\mathrm{A}}\Vert^2_{L^{\infty}(0, T; \mathbb{H}^{-1})}+\frac{1}{\epsilon}\Vert u^{\epsilon}-u^{\epsilon}_{\mathrm{A}}\Vert^4_{L^4(0, T; \mathbb{L}^4)}\nonumber\\
&+\epsilon^4\Vert(-\Delta)^{1-\frac{d}{4}-\frac{\vartheta}{2}}(u^{\epsilon}-u^{\epsilon}_{\mathrm{A}})\Vert^2_{L^2(0, T; \mathbb{L}^2)}.
\end{align*}
Using the Cauchy-Schwarz inequality,  triangle inequality,  $\mathcal{E}_4(u^{\epsilon}_{\mathrm{A}}) \leq C \mathcal{E}(u^{\epsilon}_{\mathrm{A}})^{2}$ 
(by the embedding $\mathbb{H}^1 \hookrightarrow \mathbb{H}^{2-\frac{d}{2}-\vartheta}$), \eqref{Regularesti} (with $p=4$) and \eqref{addition2}, it holds that
\begin{align}
\label{relect2}
\mathbb{E}[{\tt Err}_{\mathrm{A}}]&=\int_{\Omega}1\!\!1_{\widetilde{\Omega}_1}{\tt Err}_Ad\mathbb{P}(\omega)+\int_{\Omega}1\!\!1_{\widetilde{\Omega}^c_1}{\tt Err}_{\mathrm{A}} d\mathbb{P}(\omega)\nonumber\\
&\leq C\epsilon^{\gamma  -1}+C\left(\mathbb{P}[\widetilde{\Omega}^c_1]\right)^{\frac{1}{2}}\left(\mathcal{E}_4(u^{\epsilon})+\mathcal{E}(u^{\epsilon}_{\mathrm{A}})^{2}\right)^{\frac{1}{2}}\leq C\epsilon^{\gamma -1}+C_{\delta, \eta}\epsilon^{\frac{\delta+\eta-1}{2}}+C_{\vartheta, \kappa}\epsilon^{\frac{\vartheta+\kappa-1}{2}}. 
\end{align}
The statement of the theorem then follows from \eqref{relect2}, \eqref{addition0} and \thmref{mainresult2} 
by the triangle inequality.
\end{proof}

As a consequence of \thmref{mainresult1} we obtain $\mathbb{P}$-a.s. convergence of the solution $u^\eps$ of the stochastic Cahn-Hilliard equation to the solution of the deterministic Hele-Shaw problem 
(\ref{model4}) in the sense that  $ \{(t,x)\in\mathcal{D}_T:\,\, t\in(0,T),\,\,\lim_{\eps\rightarrow 0} u^\eps(t,x) \rightarrow \pm 1\}$ respectively converge to  the exterior and interior of the interface $\{\Gamma_t\}_{t\in(0,T)}$.
The proof of the result follows along the lines of \cite[Corollary 4.5]{BYZ22}.
 \begin{corollary}
 There exists a subsequence $\{\epsilon_k\}_{k\in\mathbb{N}}$ such that 
 \begin{align*}
 \lim_{k\rightarrow\infty}u^{\epsilon_k}=1-2\chi_{\mathcal{D}^{-}}\quad \text{in}\; L^p(0,T; \mathbb{L}^p) \text{ for } p\in (2,4],
 \end{align*}
 $\mathbb{P}$-a.s. on $\Omega$, where $\mathcal{D}^- := \{(t,x)\in\mathcal{D}_T;\,\, t\in(0,T),\,\,x \in  \mathcal{D}_t^{-}\}$  and $\mathcal{D}^-_t$ is the interior of $\Gamma_t$ in $\mathcal{D}$. 
 \end{corollary}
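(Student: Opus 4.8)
The plan is to combine the probabilistic error estimate \eqref{result4} with the deterministic convergence of the approximation $u^{\epsilon}_{\mathsf{A}}$ to the step function, and to pass from convergence in probability to almost sure convergence along a subsequence by means of the Borel--Cantelli lemma. I would begin with the triangle inequality
\begin{align*}
\Vert u^{\epsilon}-(1-2\chi_{\mathcal{D}^{-}})\Vert_{L^4(\mathcal{D}_T)}\leq \Vert u^{\epsilon}-u^{\epsilon}_{\mathsf{A}}\Vert_{L^4(\mathcal{D}_T)}+\Vert u^{\epsilon}_{\mathsf{A}}-(1-2\chi_{\mathcal{D}^{-}})\Vert_{L^4(\mathcal{D}_T)}
\end{align*}
and treat the two terms separately. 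It suffices to establish convergence in $L^4(\mathcal{D}_T)=L^4(0,T;\mathbb{L}^4)$, since $\mathcal{D}_T$ has finite measure and hence $L^4(\mathcal{D}_T)\hookrightarrow L^p(\mathcal{D}_T)$ for every $p\in(2,4]$; a single subsequence will therefore serve for all such $p$ simultaneously.

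For the deterministic term I would invoke the structure of $u^{\epsilon}_{\mathsf{A}}$ from \cite{abc94}: its leading-order part is the standard transition profile evaluated at the rescaled signed distance to $\Gamma_t$, so $u^{\epsilon}_{\mathsf{A}}(t,x)\to +1$ for $x$ in the exterior of $\Gamma_t$ and $u^{\epsilon}_{\mathsf{A}}(t,x)\to -1$ for $x$ in the interior, i.e. pointwise a.e. in $\mathcal{D}_T$ towards $1-2\chi_{\mathcal{D}^{-}}$. Since $\{u^{\epsilon}_{\mathsf{A}}\}_{\epsilon}$ is uniformly bounded (cf. \eqref{Alakiresult1}), dominated convergence yields $\Vert u^{\epsilon}_{\mathsf{A}}-(1-2\chi_{\mathcal{D}^{-}})\Vert_{L^4(\mathcal{D}_T)}\to 0$ as $\epsilon\to 0$; equivalently, one observes that $u^{\epsilon}_{\mathsf{A}}$ differs from $\pm 1$ by $O(1)$ only on the interfacial layer of measure $O(\epsilon)$, so that the norm is bounded by $C\epsilon^{1/4}\to 0$.

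For the stochastic term I would fix $\delta>0$ and $\eta\geq 0$ with $\delta+\eta>0$ and apply \eqref{result4} along the geometric sequence $\epsilon_k:=2^{-k}$. Writing $A_k:=\{\Vert u^{\epsilon_k}-u^{\epsilon_k}_{\mathsf{A}}\Vert_{L^4(\mathcal{D}_T)}>C\epsilon_k^{\gamma/4}\}$, estimate \eqref{result4} gives $\mathbb{P}(A_k)\leq C_{\delta,\eta}\epsilon_k^{\delta+\eta}=C_{\delta,\eta}2^{-k(\delta+\eta)}$, whose sum over $k$ is finite. The Borel--Cantelli lemma then yields $\mathbb{P}(\limsup_k A_k)=0$, so that $\mathbb{P}$-a.s. only finitely many of the $A_k$ occur, whence $\Vert u^{\epsilon_k}-u^{\epsilon_k}_{\mathsf{A}}\Vert_{L^4(\mathcal{D}_T)}\leq C\epsilon_k^{\gamma/4}\to 0$ along this subsequence.

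Combining the two bounds along $\{\epsilon_k\}_{k\in\mathbb{N}}$ gives $\Vert u^{\epsilon_k}-(1-2\chi_{\mathcal{D}^{-}})\Vert_{L^4(\mathcal{D}_T)}\to 0$ $\mathbb{P}$-a.s., and the finite-measure embedding upgrades this to convergence in $L^p(0,T;\mathbb{L}^p)$ for all $p\in(2,4]$, as claimed. The main obstacle is the deterministic step: one must extract from the matched-asymptotics construction of \cite{abc94} that $u^{\epsilon}_{\mathsf{A}}$ genuinely converges in $L^4$ to the characteristic function of the limiting phase region, which is precisely where the geometry of the sharp interface enters; by contrast, the passage from the high-probability bound to almost sure convergence along a subsequence is a routine Borel--Cantelli argument.
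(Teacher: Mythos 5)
Your proposal is correct and follows essentially the same route as the paper, which proves this corollary exactly as in \cite[Corollary 4.5]{BYZ22}: split the error via the triangle inequality, upgrade the polynomial-in-$\epsilon$ probability bound \eqref{result4} to $\mathbb{P}$-a.s.\ convergence along a subsequence by Borel--Cantelli, and use the uniform boundedness and pointwise convergence of the matched-asymptotics approximation $u^{\epsilon}_{\mathsf{A}}$ to $1-2\chi_{\mathcal{D}^-}$ away from the (measure-zero) interface. Your quantitative remark that the interfacial layer has measure $O(\epsilon)$, giving an explicit $O(\epsilon^{1/4})$ rate for the deterministic term, is a harmless refinement of the dominated-convergence argument used there.
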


%%%%%%%%%%%%%%%%%%%%%%%%%%%%%%%%%%%%%%%
%%%%%%%%%%%%%%%%%%%%%%%%%%%%%%%%%%%%%%%
%%%%%%%%%%%%%%%%%%%%%%%%%%%%%%%%%%%%%%%
%%%%%%%%%%%%%%%%%%%%%%%%%%%%%%%%%%%%%%%
%%%%%%%%%%%%%%%%%%%%%%%%%%%%%%%%%%%%%%%
\section{Limiting case with $\mathbb{H}^1$ spatial regularity}
\label{sec_h1}

In this section we consider the stochastic Cahn-Hilliard equation with a slightly more regular noise
\begin{align}
\label{sch1} %\label{model1}
du^{\epsilon} & =\Delta\left(-\epsilon\Delta u^{\epsilon}+\frac{1}{\epsilon}f(u^{\epsilon})\right)dt+\epsilon^{\sigma}d\widetilde{W}(t)
\quad \text{in } \mathcal{D}_T,
\end{align}
with the noise of the form
 \begin{align}
 \label{ModifNoise}
 \widetilde{W}(t,x)=\sum_{i\in \mathbb{N}^d}q_ie_i(x)\beta_i(t)\quad x\in \mathcal{D},\; t\in[0, T],  
 \end{align}
 where $\{q_i\}_{i\in\mathbb{N}^d}$ are such that $q_i\approx \lambda_i^{\frac{1}{2}-\frac{d}{4}-\frac{\upsilon}{2}}$ and $\upsilon \in (0,1]$ can be arbitrarily small.  
 The noise $\widetilde{W}$ is more regular than the space-time white noise $W$ \eqref{SpaceTimeWhiteNoise}, nevertheless 
 since $1-\frac{d}{2}-\upsilon\geq -\frac{d}{2}$ the series \eqref{ModifNoise} do not converge  in $\mathbb{L}^2$.

 We define the operator $Q: \mathbb{L}^2\rightarrow\mathbb{L}^2$ as 
 \begin{align*}
 Qu=\sum_{i\in\mathbb{N}^d}q_i(u, e_i)e_i\quad \forall u\in \mathbb{L}^2.
 \end{align*}
 Noting \eqref{Fractionaire1} we deduce that
 \begin{align*}
 \mathrm{Tr}\left((-\Delta)^{-1}Q\right)=\sum_{i\in\mathbb{N}^d}\left((-\Delta)^{-1}Qe_i, e_i\right)=\sum_{i\in\mathbb{N}^d}q_i\left((-\Delta)^{-1}e_i, e_i\right)\approx \sum_{i\in\mathbb{N}^d}\lambda_i^{-\frac{1}{2}-\frac{d}{4}-\frac{\upsilon}{2}}.
 \end{align*}
 For $d=3$ the above  identity implies that $\mathrm{Tr}\left((-\Delta)^{-1}Q\right)=\infty$, see \eqref{convergencecondition}. Hence, the condition \cite[Assumption 3.1]{abk18} is not satisfied for (\ref{ModifNoise}) in the case $d=3$. 

Similarly to Section~\ref{sec_exist} we introduce the stochastic convolution
\begin{align*}
\widetilde{Z}^{\epsilon}(t):=\epsilon^{\sigma}\int_0^te^{-(t-s)\epsilon\Delta^2}d\widetilde{W}(s)=\epsilon^{\sigma}\sum_{i\in\mathbb{N}^d}q_i\int_0^te^{-\lambda_i^2(t-s)\epsilon}e_id\beta_i(s)\quad t\in[0, T]. 
\end{align*}
Analogously to Lemma \ref{convollemma} and \ref{bruitlemma1}, for $1 \leq p< \infty$ one can show the estimate
\begin{align}\label{reg_z}
\mathbb{E}\left[\sup_{t\in[0, T]}\Vert \widetilde{Z}^{\epsilon}(t)\Vert^p_{\mathbb{L}^p}\right]+\mathbb{E}\left[\sup_{t\in[0, T]}\Vert \nabla\widetilde{Z}^{\epsilon}(t)\Vert^p\right]\leq C(p)\epsilon^{\left(\sigma-\frac{1}{2}\right)}.
\end{align}

Owing to the better regularity  properties of the convolution (\ref{reg_z}) it is straightforward to modify the proof of \thmref{regularitytheorem}
to show that the solution of \eqref{sch1} has the following regularity properties
\begin{align*}
u^{\epsilon}\in L^{\infty}\left(\Omega; C([0, T]; \mathbb{H}^{-1})\right)\cap L^2\left(\Omega; L^2(0, T; \mathbb{H}^{1})\right)\cap L^4\left(\Omega; L^4(0, T; \mathbb{L}^4)\right),
\end{align*}  
Moreover, for any $p\geq 2$ the following estimate holds
\begin{align*}
  %\label{RegularestiModified}
 \widetilde{\mathcal{E}}_p(u^{\epsilon})&:= \mathbb{E}\left[\Vert u^{\epsilon}\Vert^p_{L^{\infty}(0, T; \mathbb{H}^{-1})}+\epsilon^{\frac{p}{2}}\Vert \nabla u^{\epsilon}\Vert^p_{L^2(0, T; \mathbb{L}^2)}+\frac{1}{\epsilon^{\frac{p}{2}}}\Vert u^{\epsilon}\Vert^{2p}_{L^4(0, T; \mathbb{L}^4)}\right]\nonumber\\
 &\leq C\left(\epsilon^{-\frac{p}{2}}+\epsilon^{\left(\sigma-\frac{1}{2}\right)p}+\epsilon^{\left(2\sigma-\frac{3}{2}\right)p}\right).
\end{align*}

The remaining results of Section~\ref{sec_stoch} hold true with the fractional Sobolev norm replaced by the $\mathbb{H}^1$ norm.
In particular \eqref{result5} improves to
\begin{align*}
%\label{result5b}
\mathbb{P}\left(\left\{\Vert u^{\epsilon}-u^{\epsilon}_{\mathsf{A}}\Vert^2_{L^2(0, T; \mathbb{H}^{1})}\leq C\epsilon^{\frac{\gamma}{3} -4}\right\}\right)\geq 1-C_{\delta,\eta}\epsilon^{\delta+\eta}-C_{\upsilon,\kappa}\epsilon^{\upsilon+\kappa},
\end{align*}
for any $\upsilon>0$. 
The above estimate generalizes \cite[Theorem 3.10]{abk18} in the case $d=3$ to the case of less regular noise. 

Finally, we also obtain an analogue of the estimate in \coref{mainresult3}:
\begin{align*}
&\mathbb{E}\left[\Vert u^{\epsilon}-u^{\epsilon}_{D}\Vert^2_{L^{\infty}(0, T; \mathbb{H}^{-1})}+\frac{1}{\epsilon}\Vert u^{\epsilon}-u^{\epsilon}_{D}\Vert^4_{L^4(0, T; \mathbb{L}^4)}+ \epsilon^4\Vert u^{\epsilon}-u^{\epsilon}_{D}\Vert^2_{L^2(0, T; \mathbb{H}^{1})}\right]\nonumber\\
&\leq C\epsilon^{\gamma-1}+C_{\delta, \eta}\epsilon^{\frac{\delta+\eta-1}{2}}+C_{\upsilon, \kappa}\epsilon^{\frac{\upsilon+\kappa-1}{2}}.
\end{align*}

%%%%%%%%%%%%%%%%%%%%%%%%%%%%%%%%%%%%%%%%%%%%%%%%%%%%%%%%%%
%%%%%%%%%%%%%%%%%%%%%%%%%%%%%%%%%%%%%%%%%%%%%%%%%%%%%%%%%%
%%%%%%%%%%%%%%%%%%%%%%%%%%%%%%%%%%%%%%%%%%%%%%%%%%%%%%%%%%
%%%%%%%%%%%%%%%%%%%%%%%%%%%%%%%%%%%%%%%%%%%%%%%%%%%%%%%%%%
\section{The deterministic problem}
\label{sec_det}

In this section we derive improved estimates for the sharp interface limit of the deterministic Cahn-Hilliard equation \eqref{model3}.

 \begin{theorem}
 \label{mainresult2}
% Let \assref{assumption1} be fulfilled. 
 Let $\epsilon\in(0, 1]$ be sufficiently small, $2<r\leq \frac{8}{3}$ and $u^{\epsilon}_D$ be the solution to the deterministic Cahn-Hilliard equation \eqref{model3}. Then for any   $\alpha>0$ (arbitrarily small), $\gamma>  \frac{(7-2\alpha)r+6\alpha-8}{r-2}$ and $K$ in \eqref{Alakiresult1} large enough such that $\frac{3}{2}(K-1)\geq \gamma$,   there exists a constant $C=C(r)>0$ independent of $\epsilon$, such that
 \begin{align*}
\sup_{t\in[0, T]}\Vert u^{\epsilon}_D-u^{\epsilon}_{\mathsf{A}}\Vert^2_{\mathbb{H}^{-1}}+\epsilon^4\Vert u^{\epsilon}_D-u^{\epsilon}_{\mathsf{A}}\Vert^2_{L^2(0, T; \mathbb{H}^{1})}+ \frac{13}{8\epsilon}\Vert u^{\epsilon}_D-u^{\epsilon}_{\mathsf{A}}\Vert_{L^4(0, T; \mathbb{L}^4)}\leq C\epsilon^{\gamma-1}.
 \end{align*} 
 \end{theorem}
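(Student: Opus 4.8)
The plan is to transcribe the argument of \thmref{mainresult1} to the noise-free setting, where the stochastic convolution is absent, $Z^\epsilon \equiv 0$. First I would set $R^\epsilon_D := u^\epsilon_D - u^\epsilon_{\mathsf{A}}$ and subtract \eqref{model5} from \eqref{model3} to check that $R^\epsilon_D$ solves exactly the random PDE \eqref{Yepsilon1} with $Y^\epsilon$ replaced by $R^\epsilon_D$ and $Z^\epsilon\equiv 0$ (and with vanishing initial datum, the approximation matching $u^\epsilon_D$ at $t=0$). Hence the proof of \lemref{Formulalemma} applies verbatim and, all $Z^\epsilon$-terms dropping out, it delivers the energy inequality
\begin{align*}
&\Vert R^\epsilon_D(t)\Vert^2_{\mathbb{H}^{-1}} + \epsilon^4\int_0^t\Vert\nabla R^\epsilon_D(s)\Vert^2 ds + \frac{13}{8\epsilon}\int_0^t\Vert R^\epsilon_D(s)\Vert^4_{\mathbb{L}^4} ds \\
&\qquad \leq \frac{C}{\epsilon}\int_0^t\Vert R^\epsilon_D(s)\Vert^3_{\mathbb{L}^3} ds + C\epsilon^{\frac{1}{2}}\int_0^t\Vert r^\epsilon_{\mathsf{A}}(s)\Vert^{\frac{3}{2}}_{C(\mathcal{D})} ds.
\end{align*}
The robustness of this bound with respect to $\epsilon$ (i.e. the absence of an $\epsilon$-dependent Gronwall constant) is already built into \lemref{Formulalemma} through its internal use of the spectral estimate \propref{Spectralestimate}, so no additional care is needed here.

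Next I would introduce the (now purely deterministic) stopping time $T_\epsilon := T\wedge\inf\{t>0: \int_0^t\Vert R^\epsilon_D(s)\Vert^3_{\mathbb{L}^3} ds > \epsilon^\gamma\}$, cf. \eqref{stoppingtime}. For $t\leq T_\epsilon$ the cubic term on the right is at most $\frac{C}{\epsilon}\epsilon^\gamma = C\epsilon^{\gamma-1}$, while the remainder is controlled by \eqref{contconv}, namely $C\epsilon^{\frac{1}{2}}(\epsilon^{K-2})^{\frac{3}{2}} = C\epsilon^{\frac{3K-5}{2}}$. Since the hypothesis $\frac{3}{2}(K-1)\geq\gamma$ is precisely $\frac{3K-5}{2}\geq\gamma-1$, for $\epsilon\leq 1$ this remainder is absorbed, giving the a priori bound
\begin{align*}
&\sup_{s\in[0,t]}\Vert R^\epsilon_D(s)\Vert^2_{\mathbb{H}^{-1}} + \epsilon^4\int_0^t\Vert\nabla R^\epsilon_D(s)\Vert^2 ds + \frac{13}{8\epsilon}\int_0^t\Vert R^\epsilon_D(s)\Vert^4_{\mathbb{L}^4} ds \leq C\epsilon^{\gamma-1},\quad t\leq T_\epsilon.
\end{align*}
This is the deterministic counterpart of \lemref{lemmaerrorOmega2}, but with the role of $\gamma_1$ played directly by $\gamma-1$, as the $\sigma$-dependent exponents coming from $Z^\epsilon$ no longer appear.

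The decisive step is to upgrade this to $T_\epsilon\equiv T$ by contradiction. If $T_\epsilon < T$, then by continuity of $t\mapsto\int_0^t\Vert R^\epsilon_D(s)\Vert^3_{\mathbb{L}^3} ds$ this integral equals $\epsilon^\gamma$ at $T_\epsilon$. I would then reproduce the interpolation computation \eqref{Global1} with $\gamma_1=\gamma-1$: applying \lemref{Fundamentallemma} with parameter $\alpha>0$, estimating $\epsilon^\alpha\int_0^{T_\epsilon}\Vert R^\epsilon_D\Vert^4_{\mathbb{L}^4}\leq C\epsilon^{\gamma+\alpha}$ from the a priori bound, and controlling the residual $\mathbb{H}^{-1}$--$\mathbb{H}^1$ factor by H\"older in time (legitimate since $\frac{3r-4}{4}\leq1$) together with $\Vert R^\epsilon_D\Vert_{\mathbb{H}^1}\simeq\Vert\nabla R^\epsilon_D\Vert$ (as $R^\epsilon_D\in\mathbb{L}^2_0$), leads to
\begin{align*}
\int_0^{T_\epsilon}\Vert R^\epsilon_D(s)\Vert^3_{\mathbb{L}^3} ds\leq C\epsilon^{\gamma+\alpha}+C\epsilon^{\frac{r}{2}(\gamma-1)+(\alpha-3)r+4-3\alpha}.
\end{align*}
Both exponents strictly exceed $\gamma$: the first since $\alpha>0$, the second because the standing assumption $\gamma>\frac{(7-2\alpha)r+6\alpha-8}{r-2}$ is exactly equivalent to $\frac{r}{2}(\gamma-1)+(\alpha-3)r+4-3\alpha>\gamma$. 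Hence the right-hand side is $<\epsilon^\gamma$ for $\epsilon$ small, contradicting the definition of $T_\epsilon$, so $T_\epsilon=T$ and the a priori bound with $t=T$ gives the claim. I do not anticipate a genuinely new difficulty compared with \thmref{mainresult1}; the only points demanding attention are the exponent bookkeeping in the contradiction step and verifying that $\frac{3}{2}(K-1)\geq\gamma$ is exactly the threshold that renders the consistency remainder $\epsilon^{\frac{3K-5}{2}}$ harmless.
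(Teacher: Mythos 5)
Your proposal is correct and follows essentially the same route as the paper: the paper likewise derives the energy inequality of \lemref{Formulalemma} with $Z^\epsilon\equiv 0$ for $R^\epsilon_D=u^\epsilon_D-u^\epsilon_{\mathsf{A}}$, introduces the deterministic stopping time, absorbs the remainder via \eqref{contconv} under $\frac{3K-5}{2}\geq\gamma-1$, and closes the contradiction argument with \lemref{Fundamentallemma}, arriving at the same exponent $\frac{r}{2}\gamma+\frac{(2\alpha-7)r}{2}+4-3\alpha>\gamma$, which is equivalent to the stated lower bound on $\gamma$. Your exponent bookkeeping and the equivalence checks (including $\frac{3}{2}(K-1)\geq\gamma \Leftrightarrow \frac{3K-5}{2}\geq\gamma-1$) match the paper's computation exactly.
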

 
 \begin{remark}
 \thmref{mainresult2} improves \cite[Theorem 2.1]{abc94} in the case $f(u)=u^3-u$ and $d=3$, in the sense that here we obtain error estimates in $L^p(0, T; \mathbb{L}^p)$, $p\in (2, 4]$, i.e., with $p$ exceeding the barrier $p\leq \frac{2d+8}{d+2}$ prescribed in \cite{abc94}. Note that even for $d=2$ the error estimates in \cite{abc94} are only in $L^3(0, T; \mathbb{L}^3)$, while we obtain an error estimates in $L^4(0, T; \mathbb{L}^4)$. 
 \end{remark}
 
\begin{proof}
The proof goes along the same lines as the one of \thmref{mainresult1}, we only sketch some details. Setting $R^{\epsilon}_D:=u^{\epsilon}_D-u^{\epsilon}_{\mathsf{A}}$, with $u^{\epsilon}_D$ being the solution to the  CHE \eqref{model3}. Then $R^{\epsilon}_D$ satisfies the following PDE
\begin{align*}
\frac{d}{dt}R^{\epsilon}_D=-\epsilon\Delta^2R^{\epsilon}_D+\frac{1}{\epsilon}\Delta(f(R^{\epsilon}_D+u^{\epsilon}_{\mathsf{A}})-f(u^{\epsilon}_{\mathsf{A}}))+\Delta r^{\epsilon}_{\mathsf{A}}, \quad R^{\epsilon}_D(0)=0.
\end{align*}
Testing the above equation with $(-\Delta)^{-1}R^{\epsilon}_D$ leads to
\begin{align*}
\frac{1}{2}\frac{d}{dt}\Vert R^{\epsilon}_D\Vert^2_{\mathbb{H}^{-1}}+\epsilon\Vert  \nabla R^{\epsilon}_D\Vert^2+\frac{1}{\epsilon}\left(f(R^{\epsilon}_D+u^{\epsilon}_{\mathsf{A}})-f(u^{\epsilon}_{\mathsf{A}}), R^{\epsilon}_D\right)+\left( r^{\epsilon}_{\mathsf{A}}, R^{\epsilon}_D\right)=0.
\end{align*}
As in the proof of {\lemref{Formulalemma}}, we obtain
\begin{align}
\label{deter1}
&\Vert R^{\epsilon}_D\Vert^2_{\mathbb{H}^{-1}}+ \epsilon^4\int_0^t\Vert\nabla R^{\epsilon}_D(s)\Vert^2ds+\frac{13}{8\epsilon}\int_0^t\Vert R^{\epsilon}_D(s)\Vert^4_{\mathbb{L}^4}ds\nonumber\\
&\leq \frac{C}{\epsilon}\int_0^t\Vert R^{\epsilon}_D(s)\Vert^3_{\mathbb{L}^3}ds+ C\epsilon^{\frac{1}{2}} \int_0^t\Vert r^{\epsilon}_{\mathsf{A}}(s)\Vert^{\frac{3}{2}}_{C(\mathcal{D})}ds.
\end{align}
We define
\begin{align*}
T_{\epsilon}=T\wedge\inf\left\{t\in[0, T]:\;\int_0^t\Vert R^{\epsilon}_D(s)\Vert^3_{\mathbb{L}^3}ds>\epsilon^{\gamma}\right\},
\end{align*}
for some $\gamma>0$, which will be specified later.  

From \eqref{deter1}, using \eqref{contconv} and noting the definition of $T_{\epsilon}$, it follows for  all $t\leq T_{\epsilon}$  that
\begin{align}
\label{deter2}
&\sup_{0\leq s\leq t}\Vert R^{\epsilon}_D(s)\Vert^2_{\mathbb{H}^{-1}}+ \epsilon^4\int_0^t\Vert \nabla R^{\epsilon}_D(s)\Vert^2ds+\frac{13}{8\epsilon}\int_0^t\Vert R^{\epsilon}_D(s)\Vert^4_{\mathbb{L}^4}ds\nonumber\\
&\leq C\epsilon^{\gamma-1}+C\epsilon^{\frac{3K-5}{2}}\leq C\epsilon^{\gamma-1},
\end{align}
where we choose $K$ large enough such that $\gamma-1\leq \frac{3K-5}{2}$.

Next, we show that $T_{\epsilon}=T$ for suitable $\gamma$. We proceed by contradiction and assume that $T_{\epsilon}<T$. Then for all $t\leq T_{\epsilon}$, using \lemref{Fundamentallemma} with  $\alpha>0$ and \eqref{deter2} yields
\begin{align}
\label{relect1}
\int_0^t\Vert R^{\epsilon}_D(s)\Vert^3_{\mathbb{L}^3}ds&\leq  \epsilon^{\alpha}\int_0^t\Vert R^{\epsilon}_D(s)\Vert^4_{\mathbb{L}^4}ds+ C\epsilon^{r\alpha-3\alpha}\int_0^t\Vert R^{\epsilon}_D(s)\Vert^{\frac{4-r}{2}}_{\mathbb{H}^{-1}}\Vert  R^{\epsilon}_D(s)\Vert^{\frac{3r-4}{2}}_{\mathbb{H}^1}ds\nonumber\\
&\leq C\epsilon^{\gamma +\alpha}+C\epsilon^{r\alpha-3\alpha}\sup_{s\in[0, t]}\Vert R^{\epsilon}_D(s)\Vert_{\mathbb{H}^{-1}}^{\frac{4-r}{2}}\int_0^t\Vert R^{\epsilon}_D(s)\Vert^{\frac{3r-4}{2}}_{\mathbb{H}^1}ds
%\\
% &\leq  C\epsilon^{\gamma}+C\epsilon^{\left(\frac{4-r}{4}\right)(\gamma-1)}\left(\int_0^t\Vert R^{\epsilon}_D(s)\Vert^2_{\mathbb{H}^1}ds\right)^{\frac{3r-4}{4}}\nonumber
\\
\nonumber &\leq C\epsilon^{\gamma +\alpha}+C \epsilon^{r\alpha-3\alpha}\epsilon^{\left(\frac{4-r}{4}\right)(\gamma-1)}\epsilon^{4-3r}\epsilon^{\left(\frac{3r-4}{4}\right)(\gamma -1)}\nonumber\\
&= C\epsilon^{\gamma +\alpha}+C\epsilon^{ \frac{r}{2}\gamma+\frac{(2\alpha-7)r}{2}+4-3\alpha}.\nonumber
\end{align}
The right hand side of \eqref{relect1} is bounded above by $\epsilon^{\gamma}$ for sufficiently small $\epsilon$, if we have $\frac{r}{2}\gamma+\frac{(2\alpha-7)r}{2}+4-3\alpha> \gamma$, i.e. if $\gamma> \frac{(7-2\alpha)r+6\alpha-8}{r-2}$. Hence for such value of $K$ and $\gamma$ we have $\int_0^t\Vert R^{\epsilon}_D(s)\Vert^3_{\mathbb{L}^3}ds\leq \epsilon^{\gamma}$, which contradicts the definition of $T_{\epsilon}$. Hence $T_{\epsilon}=T$. It then follows from \eqref{deter2} that 
\begin{align*}
\sup_{0\leq t\leq T}\Vert R^{\epsilon}_D(t)\Vert^2_{\mathbb{H}^{-1}}+ \epsilon^4\int_0^T\Vert\nabla R^{\epsilon}_D(t)\Vert^2dt+\frac{13}{18\epsilon}\int_0^T\Vert R^{\epsilon}_D(t)\Vert^4_{\mathbb{L}^4}dt\leq C\epsilon^{\gamma -1},
\end{align*}
 which completes the proof.
\end{proof}

\section*{Acknowledgement}
Funded by the Deutsche Forschungsgemeinschaft (DFG, German Research Foundation) -- Project-ID 317210226 -- SFB 1283.

\bibliographystyle{plain}
\bibliography{stochch_hd_20240112_accepted}

\begin{thebibliography}{10}

\bibitem{abc94}
N.~D. Alikakos, P.~W. Bates, and X.~Chen.
\newblock Convergence of the {C}ahn-{H}illiard equation to the {H}ele-{S}haw
  model.
\newblock {\em Arch. Rational Mech. Anal.}, 128(2):165--205, 1994.

\bibitem{Banas19}
D.~Antonopoulou, \v{L}. Ba\v{n}as, R.~N\"{u}rnberg, and A.~Prohl.
\newblock Numerical approximation of the stochastic {C}ahn-{H}illiard equation
  near the sharp interface limit.
\newblock {\em Numer. Math.}, 147(3):505--551, 2021.

\bibitem{Antonopoulou_2023}
D.~C. Antonopoulou.
\newblock Higher moments for the stochastic cahn–hilliard equation with
  multiplicative fourier noise.
\newblock {\em Nonlinearity}, 36(2):053--1081, 2023.

\bibitem{abk18}
D.~C. Antonopoulou, D.~Bl\"{o}mker, and G.~D. Karali.
\newblock The sharp interface limit for the stochastic {C}ahn-{H}illiard
  equation.
\newblock {\em Ann. Inst. Henri Poincar\'{e} Probab. Stat.}, 54(1):280--298,
  2018.

\bibitem{sch_aposter}
\v{L}. Ba\v{n}as and C.~Vieth.
\newblock Robust a posteriori estimates for the stochastic {C}ahn-{H}illiard
  equation.
\newblock {\em Math. Comput.}, 92:2025--2063, 2023.

\bibitem{BYZ22}
\v{L}. Ba\v{n}as, H.~Yang, and R.~Zhu.
\newblock Sharp interface limit of stochastic {C}ahn-{H}illiard equation with
  singular noise.
\newblock {\em Potential Anal.}, 59:497--518, 2023.

\bibitem{Cahn1}
J.~W. Cahn.
\newblock On spinodal decomposition.
\newblock {\em Acta Metall.}, 9(9):795--801, 1961.

\bibitem{Cahn2}
J.~W. Cahn and J.~E. Hilliard.
\newblock Free energy of a nonuniform system. {I}. interfacial free energy.
\newblock {\em J. Chem. Phys.}, 28(2):258--267, 1958.

\bibitem{Debussche1}
G.~Da~Prato and A.~Debussche.
\newblock Stochastic {C}ahn-{H}illiard equation.
\newblock {\em Nonlinear Anal.}, 26(2):241--263, 1996.

\bibitem{DaPratoZabczyk}
G.~Da~Prato and J.~Zabczyk.
\newblock {\em Stochastic equations in infinite dimensions}.
\newblock Encyclopedia of mathematics and its applications. Cambridge Univ.
  Press, Cambridge, 2. ed. edition, 2014.

\bibitem{Folland}
G.~B. Folland.
\newblock {\em Real analysis}.
\newblock Pure and Applied Mathematics (New York). John Wiley \& Sons, Inc.,
  New York, second edition, 1999.
\newblock Modern techniques and their applications, A Wiley-Interscience
  Publication.

\bibitem{GAMEIRO2005693}
M.~Gameiro, K.~Mischaikow, and T.~Wanner.
\newblock Evolution of pattern complexity in the {C}ahn-{H}illiard theory of
  phase separation.
\newblock {\em Acta Materialia}, 53(3):693--704, 2005.

\bibitem{yang2019}
H.~Yang and R.~Zhu.
\newblock Weak solutions to the sharp interface limit of stochastic
  {C}ahn-{H}illiard equations, 2019.
\newblock \url{https://arxiv.org/abs/1905.09182}.

\end{thebibliography}

\end{document}